\def\cube1[#1,#2,#3,#4,#5]#6{ %
  \node[draw, rectangle, shape aspect=#3, rotate=#2, minimum size=#1, %
 ] (#4) at #5
  {\textcolor{white!53}{ }}; %
  \node at #5 {#6}; %
}
\newtheorem{theorem}{Theorem}[section]
\newtheorem{proposition}[theorem]{Proposition}
\newtheorem{lemma}[theorem]{Lemma}
\newtheorem{corollary}[theorem]{Corollary}
\def\support {{\rm supp}\,}
\def\RR{\Bbb R}
\def \SL {\sqrt{L}}
\def \f {\frac}
\def \RN {\mathbb{R}^n}
\renewcommand{\L}{\mathscr{L}}
\title[Exponential-square integrability, weighted inequalities  for the square function]
{   Exponential-square integrability,  weighted inequalities  for the square functions associated to operators
and applications    }
\author{Peng Chen,\ Xuan Thinh Duong,\ Liangchuan Wu\ and\ Lixin Yan}
\address{Peng Chen, Department of Mathematics, Sun Yat-sen University, Guangzhou, 510275, P.R.~China}
\email{chenpeng3@mail.sysu.edu.cn}
\address{Xuan Thinh Duong, Department of Mathematics, Macquarie University, NSW 2109, Australia}
\email{xuan.duong@mq.edu.au}
\address{Liangchuan Wu, Department of Mathematics, Sun Yat-sen  University,
Guangzhou, 510275, P.R.~China}
\email{wulchuan@mail2.sysu.edu.cn}
\address{Lixin Yan, Department of Mathematics, Sun Yat-sen University, Guangzhou, 510275, P.R.~China}
\email{mcsylx@mail.sysu.edu.cn
}
\subjclass[2010]{42B20, 42B25, 47B38, 58G35.}
\keywords{ Exponential-square integrability, weighted norm inequalities, the square function,
the dyadic square function, semigroups, spectral multiplier, eigenvalue estimate,  space of homogeneous type.}
\date{\today}
\begin{document}

\begin{abstract}
Let $X$ be a metric space with a doubling measure. Let $L$ be a nonnegative self-adjoint operator
  acting on $L^2(X)$, hence $L$ generates an analytic semigroup $e^{-tL}$. Assume that
  the kernels $p_t(x,y)$ of $e^{-tL}$  satisfy
Gaussian upper bounds  and H\"older's continuity in $x$ but  we do not require the semigroup to satisfy
the preservation condition $e^{-tL}1 = 1$.
In this article we aim to establish the exponential-square integrability of a function whose square function
associated to an operator $L$ is bounded, and the proof   is new even for the Laplace operator on
 the Euclidean spaces  ${\mathbb R^n}$.
 We then apply this result to obtain: (i) estimates of the norm on $L^p$   as $p$ becomes large  for operators such as
 the square functions or spectral
 multipliers; (ii) weighted norm inequalities for the square functions; and
 (iii) eigenvalue estimates for   Schr\"odinger operators
 on ${\mathbb R}^n$ or Lipschitz domains of ${\mathbb R}^n$.

\end{abstract}

\maketitle

\tableofcontents


\section{Introduction}
\setcounter{equation}{0}

\noindent
{\bf 1.1. Background.}\
Consider the Laplace operator $\Delta=-\sum_{i=1}^n\partial_{x_i}^2$ on the Euclidean space $\mathbb R^n$.
For   a (suitable) function $f(x)$ on $\mathbb R^n$, the Poisson
integral $u(x,t)=e^{-t\sqrt{\Delta}} f(x)$ is  the harmonic extension of $f$ to
 $\mathbb R^{n+1}_{+}$.
For $\alpha>0$,  define the (classical) square function (or classical Lusin area inegral) by
\begin{eqnarray}\label{e1.1}
S_{\alpha}f(x)=\left(\iint_{|x-y|<\alpha t}
\big| \nabla_y u(y, t)  \big|^2  \ t^{1-n}   \,   dy dt \right)^{1/2}.
\end{eqnarray}
It is  known  that if $S_{\alpha}f$ is a bounded function, then $f$ belongs to the space
 ${\rm BMO}$  and hence the John-Nirenberg inequality implies that
 $f$ is in the exponential  class. In other words,
$\exp \left(c|f|/\|S_{\alpha}f\|_{L^\infty }\right)\in L^1_{\rm loc}(\mathbb R^n)$ for a suitable   constant $c>0.$
(See \cite{St} for the definition  and properties of BMO functions.)
Moreover, a celebrated   theorem of
Chang-Wilson-Wolff (\cite[Theorem 3.2]{CWW}) states that
\begin{eqnarray}\label{e1.2}
\exp \left(c|f|^2/\|S_{\alpha}f\|^2_{L^\infty}\right)\in L^1_{\rm loc}(\mathbb R^n)
\end{eqnarray}
for a suitable constant $c>0$ depending on $n$ and $\alpha.$

This result \eqref{e1.2}  has been useful in the work of R. Fefferman, C. Kenig and J. Pipher; in the study
of Hardy spaces; and also in the work of C. Moore and M. Wilson (see page 486, \cite{CCPMLMS}).
Due to R. Fefferman and J. Pipher, it  was observed  in \cite{FP97}
that   \eqref{e1.2}
implies that for   the Hilbert transform $H$ (or other singular integrals) on ${\mathbb R}$, as $p\to \infty,$
\begin{eqnarray*}
 \big\|Hf \big\|_{L^p(\RR)} \leq Cp \, \big\|f\big\|_{L^p(\RR)},
\end{eqnarray*}
which is well-known to be the sharp dependence on $p$. Analogous sharp estimate  holds for the square
function,
\begin{eqnarray*}
 \big\|f\big\|_{L^p(\RN)} \leq C p^{1/2} \, \big\|S_{\alpha}f\big\|_{L^p(\RN)}
\end{eqnarray*}
as $p\to \infty$ (see page 357, \cite{FP97}).  We note  that $\big\|S_{\alpha}f\big\|_{L^p(\RN)}\leq C p^{1/2} \,
 \big\|f\big\|_{L^p(\RN)}$ follows from Theorem 1.2 in \cite{CWW} with a constant $C$ that
 depends on $n$ and $\alpha$.

The result \eqref{e1.2}  also has other applications; for example, M. Wilson used  it  to derive
  sharp weighted inequalities for the  square function
to obtain Sobolev and singular integral inequalities and
  eigenvalue estimates for degenerate Schr\"odinger operators (see  \cite {Wi87, Wi, Wi08}).
In the last few decades, this  result \eqref{e1.2}   has been  studied extensively,
see for examples, \cite{Fe87, Pe, Swe91,  Wi87, Wi, Wi08}
and the references therein.

\medskip

\noindent
{\bf 1.2. Assumptions, notations and definitions.}\,
Our setting will be the following.
We consider a nonnegative self-adjoint operator  $L$ on $L^2(X)$ where $(X, d, \mu)$ is a metric measure space which
satisfies the  volume doubling  condition
\begin{eqnarray*}
 \mu(B(x, \lambda r)) \le C \lambda^n \mu(B(x,r)), \ \,\;  \forall x \in X,\, \lambda \ge 1, \, r > 0,
 \end{eqnarray*}
where $C$ and $n$ are positive constants and $\mu(B(x,r))$ denotes the volume of the open ball $B(x,r)$  of centre $x$
and radius $r$.
In this article, we assume that  $L$ is a densely-defined operator
on $L^2( X).$ In different sections of the article,   we
assume  that $L$ satisfies some or all the following properties:
\begin{enumerate}
	\item [\textbf{(H1)}] $L$ is a nonnegative self-adjoint operator on $L^2(X)$;
	\item [\textbf{(H2)}] The kernel of $e^{-tL}$, denoted by $p_{t}(x,y)$, is
a measurable function on $X\times X$ and satisfies a Gaussian upper bound, that is
	\begin{eqnarray*}
	\left|p_{t}(x,y)\right| \leq \f{C}{\mu(B(x,\sqrt{t}))}
\exp\left(-\f{d(x,y)^2}{ct}\right)
	\end{eqnarray*}
	for all $t>0$ and $x,y\in X$, where $C$ and $c$ are positive constants.
	\item [\textbf{(H3)}] The kernel $p_{t}(x,y)$ satisfies the following H\"older's
continuity estimate: there exists a constant $\theta\in (0,1]$, such that
	\begin{eqnarray*}
	\big|p_{t}(x,y)-p_{t}(x',y)\big|\leq C\left(\f{d(x,x')}{\sqrt{t}}
\right)^\theta
\f{1}{\mu(B(y,\sqrt{t}))}
\end{eqnarray*}
	  for  $d(x,x')\leq \sqrt{t}$.
\end{enumerate}

The operator $L$ is a   nonnegative  self-adjoint operator on $L^2(X)$, hence $L$ generates the $L$-Poisson semigroup
$\{e^{-t\sqrt{L}}\}_{t>0}$ on $L^2(X).$  Given an operator    $L$ satisfying ${\bf (H1)}$  and ${\bf (H2)}$,
  and  a  function $f\in L^2(X)$,  consider the following
square function   $S_{L, \alpha}f$   associated to the Poisson  semigroup
generated by $L$
\begin{eqnarray}
S_{L, \alpha}f(x):=\left(\int_0^{\infty}\int_{d(y,x)<\alpha t}
|t^2Le^{-t\sqrt{L}} f(y)|^2 {\frac{d\mu(y)}{ \mu(B(x,t))}}{\frac{dt}{ t}}\right)^{1/2},
\quad \alpha>0,
\label{e1.7}
\end{eqnarray}
and set $S_{L}f=S_{L, 1}f$.

It is known that if $L$ satisfies assumptions  ${\bf (H1)}$ and ${\bf (H2)}$, then
the operator $S_{L, \alpha}$  is   bounded on $L^p(X), 1<p<\infty$.
(See for example,  \cite{Au07}, \cite[Proposition 3.3]{GY}.)

\medskip

\noindent
{\bf 1.3. Main results.}\ This article has two aims. The first one is to
address the following question:
Is it possible to establish   harmonic analysis properties of the exponential-square class of \eqref{e1.2}
on a set?  It would be nice  to find a proof which does not require the group structure.
In this article we will give an affirmative answer to this
question by extending the work of  Chang, Wilson and Wolff  (\cite[Theorem 3.2]{CWW})
for the square function \eqref{e1.7} on  a space of homogeneous type.
 We have the following  result.

 \medskip

\noindent{\bf Theorem A.} {\it
Assume that  $X$ satisfies the volume doubling condition.
 Suppose that
$L$ is a densely-defined operator on $L^2(X)$ satisfying
\textbf{(H1)}, \textbf{(H2)} and \textbf{(H3)}.
Let $f\in L^2(X,d\mu)$ and assume that $S_{L, \alpha} f\in L^\infty(X)$. Then there exist
 positive constants $ {\gamma}=\gamma(n,\alpha)$ and $ {C}=C(n, \alpha)$
 independent of $f$ such that
\begin{eqnarray}\label{e1.13}
 \sup_{B}  \frac{1}{\mu(B)} \int_{B} \exp\Bigg( {\gamma}\f{ |f(x)-
f_{B} |^2}
{ \|S_{L, \alpha}f \|_{L^\infty(X)}^2}\Bigg)\  d\mu(x) \leq C,
\end{eqnarray}
where $f_B={1\over \mu(B)}\int_B f(x)d\mu(x)$, and the {\it sup} is taken over all balls in $X.$
 }

 \medskip

We mention that the  proof of Chang-Wilson-Wolff is using reduction to
a related distribution function inequality for dyadic martingales on ${\mathbb R^n}$,
in which   the fundamental identity of sequential analysis in statistics was applied (\cite[Theorem 3.1]{CWW}).
In the proof of our Theorem A, the main device
is to use some dyadic structures of   $X$ to define
a version of  discrete   square functions
associated to an operator $L$
on  a space $X$ of homogeneous type. From this, we are able to  show that   boundedness of the discrete
square function implies its local  exponential-square integrability, and then apply the Calder\'on  machinery (\cite{C})
to  allow  us to translate the ``discrete" results  into the continuous setting. Our proof is different substantially
from the  proof of Chang-Wilson-Wolff which is not applicable  in our setting  since
no group structure is available and
the Fourier transform is missing in the general setting of space of homogeneous type.
Another observation is that  we do   not  assume that the semigroup  $\{e^{-tL}\}_{t>0}$  satisfies
	  the standard preservation condition, so we   may   have
\begin{eqnarray*}
e^{-tL}1 \not=1
\end{eqnarray*}
and this allows us to apply the results to a wide class of operators $L$.

We also note that the kernel for $e^{-tL}$  is not assumed to be translation invariant and several
techniques which can be used with
 the Poisson kernel of the Laplacian on ${\mathbb R}^n$
 are not applicable here.
This is indeed one of the  main obstacles in this article and makes the theory
quite subtle and delicate. We
  overcome  this problem in the proof in Section 4 by using
some estimates on heat kernel bounds,
finite  propagation speed of solutions to the wave equations and
spectral theory of nonnegative self-adjoint operators.

\smallskip

The second aim of this article is to apply \eqref{e1.13} to
give various applications.
The first is to obtain
estimates of the $L^p$ operator norms as $p \rightarrow \infty $ for operators such as
 the square functions or spectral
 multipliers. We have the following result.

\medskip
\noindent{\bf Theorem B.} \ {\it Assume that  $X$ satisfies the  volume doubling  condition.
 Assume that $L$ is a
densely-defined operator
on $L^2(X)$
satisfying \textbf{(H1)}, \textbf{(H2)} and \textbf{(H3)}.

\begin{itemize}
\item[(i)]
There exists a constant $C=C(n, \alpha)>0$   such that as $p\to \infty,$
\begin{eqnarray*}
\|f\|_{L^p(X)} \leq C  p^{1/2}\  \|S_{L, \alpha}f\|_{L^p(X)}.
\end{eqnarray*}

  \item[(ii)]
 Let $\beta>{n/2}$ and assume that  $F: [0, \infty)\rightarrow \mathbb{C}$ is a bounded Borel function such that
 $F(0)=0,$
$  \sup_{t>0}\|\eta\delta_tF\|_{W^{\infty}_\beta(\RR)}<\infty$
where $ \delta_t F(\lambda)=F(t\lambda)$ and $\eta\in C_0^{\infty}({\mathbb R_+})$
 is a
fixed function, not identically zero. Then there exists a constant $C=C(n, \beta)>0$  such that
 the operator $F(L)$ satisfies
$$
 \|F(L)f\|_{L^p(X)} \leq Cp\,  \sup_{t>0}\|\eta\delta_tF\|_{W^{\infty}_\beta(\RR)}   \|f\|_{L^p(X)}
$$
as $p\to \infty.$
\end{itemize}
 }

  We remark that  under the assumptions
\textbf{(H1)} and \textbf{(H2)} of the operator $L,$ it is known that there exists a constant $C=C(n, \alpha)>0$  such that
\begin{eqnarray*}
\|S_{L, \alpha}f\|_{L^p(X)}\leq C p^{1/2}\|f\|_{L^p(X)}
\end{eqnarray*}
 as $p\to \infty$ (see \cite{GY, GX}).
Combining this with (i) of Theorem B, we then obtain  the   estimate
\begin{eqnarray*}
c p^{-1/2}\|f\|_{L^p(X)} \leq \|S_{L, \alpha}f\|_{L^p(X)}\leq C p^{1/2}\|f\|_{L^p(X)}
\end{eqnarray*}
 as $p\to \infty$.

 \smallskip

Another application  is to apply \eqref{e1.13} to obtain
 a two weighted inequality for the square function $S_{L, \alpha}$
associated to an operator $L$ (see Theorem 5.6 below for precise statement).

\medskip
\noindent{\bf Theorem C.} \ {\it Assume that  $X$ satisfies the volume doubling condition
and  $L$ is a
densely-defined operator
on $L^2(X)$
satisfying \textbf{(H1)}, \textbf{(H2)} and \textbf{(H3)}.  Let $0<p<\infty$, $\tau>p/2$,
and let $V$ and $W$ be two weights such that
$$
\int_Q V(x)\left(\log \left(e+\f{V(x)}{V_Q}\right)\right)^{\tau} d\mu(x)\leq \int_Q W(x)d\mu(x)
$$
for all $Q\in   \mathcal{D}$.  Then for all $\alpha>0,$ there exists  a constant $C=C(p, n, \tau, \alpha)>0$ such that for every
 $f\in L_0^\infty(X)$,
$$
\int_X |f(x)|^p V(x)d\mu(x)\leq C\int_X   \big( S_{L,\alpha} f(x)\big)^p W(x)d\mu(x).
$$
 }
\medskip

 Finally, we study the eigenvalue problem of Schr\"odinger operators on ${\mathbb R^n}$ or a Lipschitz domain
 of ${\mathbb R}^n$.  In the following, we let $\Omega={\mathbb R}^n$  or
  $\Omega$ be a special Lipschitz domain of ${\mathbb R}^n.$
 Let $A(x)$ be an $n\times n$
matrix function with real symmetric, bounded measurable  entries  on
${\mathbb R}^n$ satisfying the ellipticity condition
\begin{eqnarray*}
\|A\|_{\infty}\leq \lambda^{-1}\ \ \ {\rm and}\ \ \
\  A(x)\xi\cdot\xi\geq \lambda |\xi|^2
\end{eqnarray*}
for some constant $\lambda\in (0,1)$,  for all $\xi\in \Omega$
and for almost all $x\in\Omega$.
For  a nonnegative  $V$ in $L^1_{\rm loc}(\Omega)$, we consider the Schr\"odinger operator $\mathcal{L}$,
 defined by
 \begin{eqnarray}
 \label{e1.15}
 \mathcal{L}u=-{\rm{div}}(A(x)\nabla u)-V u.
 \end{eqnarray}
We note that the spectrum of the operator  $L =-{\rm{div}}A \nabla  $ is contained in $[0, \infty)$.
However, the effect of the potential $V \geq 0$ is that the spectrum of  $\mathcal{L}$ might contain some negative values.
We have the following result.

\medskip
\noindent{\bf Theorem D.} {\it
 When $\Omega={\mathbb R}^n$  or
  $\Omega$ is a special Lipschitz domain of ${\mathbb R}^n$, let
 $ \mathcal{L}=-{\rm div} (A\nabla)-V$ be  a  real symmetric operator
(under Dirichlet boundary condition or Neumann boundary condition) with nonnegative
 $V\in L^1_{\rm loc}(\Omega)$ as in (\ref{e1.15}).
 Let $\sigma(\mathcal{L})$ be the lowest nonpositive eigenvalue of $\mathcal{L}$ on $\Omega$. Fix $\tau>1$.
 For every ball $B=B(x_B, r_B)$ in $\Omega$,
 define
$$
\Lambda(B,V)=\int_B V(x)\left(\log \left(e+\f{V(x)}{V_B}\right)\right)^{\tau} d\mu(x).
$$
Then there exist two positive constants $c_1$ and  $c_2$  which depend on $n$, $\tau$ such that
$$
\sigma(\mathcal{L}) \geq -\sup_{B}
c_1\left[|B|^{-1}\Lambda(B,V)-c_2   r_B^{-2}|B|\right],
$$
where $|B|$ denotes the Lebesgue measure of $B$ in $\Omega$.

As a consequence, if
\[  r_B^2 \Lambda(B,V) \leq c_2 |B|^2 \]
for all balls $B$ in $\Omega$,
 then $\mathcal{L}$ is nonnegative.
 }

 \smallskip

 We mention that in the Euclidean spaces ${\mathbb R}^n$,
  such operators have received a great deal of attention in the
past decades.  In particular, the case $A=I$, the identity, was studied, see for example,
 in \cite{CW, CWW, Fe, KS, Pe} and the references therein; while
the case of nonconstant $A$ was treated in \cite{Wi87, Wi}.

The layout of the article is as follows. In Section 2 we recall some basic properties of space of homogeneous type,
heat kernels and finite propagation speed for the wave equation, and build the necessary kernel estimates
for functions of an operator. In Section 3 we introduce  the discrete square function
on space of homogeneous type.
   In Section 4 we give a proof of Theorem A  by using reduction to  a version of  discrete  square function
on  space   of homogeneous type.   Theorems B, C and D  will be  proved  in
 Sections 5 and 6, which include
 estimates of the $L^p$ operator norms   as $p \rightarrow \infty$ for operators such as
 the square functions or spectral
 multipliers,   weighted norm inequalities for the square functions and
  eigenvalue estimates for   Schr\"odinger operators on ${\mathbb R}^n$ or
 on a special Lipschitz domain  of ${\mathbb R}^n$.

\bigskip

\section{Notations and preliminaries on operators }
\setcounter{equation}{0}

We start by introducing  some   notations and assumptions.  Throughout this article,
unless we mention the contrary, $(X,d,\mu)$ is a metric measure  space, that is, $\mu$
is a Borel measure with respect to the topology defined by the metric $d$.
We denote by
$B(x,r)=\{y\in X,\, {d}(x,y)< r\}$  the open ball
with centre $x\in X$ and radius $r>0$. We   often just use $B$ instead of $B(x, r)$.
Given $\lambda>0$, we write $\lambda B$ for the $\lambda$-dilated ball
which is the ball with the same centre as $B$ and radius $\lambda r$.
We say that $(X, d, \mu)$ satisfies
 the doubling property (see \cite[Chapter 3]{CW71})
if there  exists a constant $C>0$ such that
\begin{eqnarray}
\mu(B(x,2r))\leq C \mu(B(x, r)),\quad \forall\,r>0,\,x\in X. \label{e2.1}
\end{eqnarray}
If this is the case, there exist  $C, n$ such that for all $\lambda\geq 1$ and $x\in X$
\begin{equation}
\mu(B(x, \lambda r))\leq C\lambda^n \mu(B(x,r)). \label{e2.2}
\end{equation}
In the Euclidean space with Lebesgue measure, $n$ corresponds to
the dimension of the space.

  Throughout the article,  we will suppose that $\mu(X)=\infty$ and $\mu(\{x\})=0$ for all $x\in X$.
  For $1\le p\le+\infty$, we denote the
norm of a function $f\in L^p(X,{\rm d}\mu)$ by $\|f\|_p$, by $\langle \cdot,\cdot \rangle$
the scalar product of $L^2(X, {\rm d}\mu)$, and if $T$ is a bounded linear operator from $
L^p(X, {\rm d}\mu)$ to $L^q(X, {\rm d}\mu)$, $1\le p, \, q\le+\infty$, we write $\|T\|_{p\to q} $ for
the  operator norm of $T$. Given a subset $E$, we denote by $\chi_E$ the characteristic function of $E.$
For every $f\in L^1_{\rm loc}(X)$ and a subset $E$, we write
$$ f_E=(f)_E=\frac{1}{\mu(E)}\int_E f(x)d\mu(x), \ \ \ {\rm and}\ \ \ \
|f|_E=\frac{1}{\mu(E)}\int_E |f(x)|d\mu(x).
$$
 For $F\in L^1({\mathbb R})$, the Fourier transform of $F$ is
given by
\begin{eqnarray*}
    {\widehat F} (t)
 =\frac{1}{2\pi}  \int_{-\infty}^{+\infty}
    F(\lambda)   e^{-it\lambda}\, d\lambda.
\end{eqnarray*}
Finally, $C$ and $c$ denote two generic constants, not necessarily the same at each occurrence, which, in the course of a
proof, may be taken to depend on any of the non-essential quantities assumed to be bounded.

\medskip

\medskip
\noindent{\bf 2.1. System of dyadic cubes.} \ In a metric space $(X, d)$, a countable family
$$
\mathscr{D}=\bigcup_{k\in \mathbb{Z}} \mathscr{D}_k,\qquad  \mathscr{D}_k=
\{Q_\beta^k:\
\beta\in \mathscr{J}_k\},
$$
of Borel sets $Q_\beta^k\subset X$  is called {\it a system of dyadic cubes} with parameter
$\delta\in (0, 1)$ if it satisfies the following properties:
\begin{enumerate}
	\item [(i)] $X=\bigcup_{\beta\in \mathscr{J}_k} Q_\beta^k$
	(disjoint union) for
	all $k\in \mathbb{Z}$;
	\item [(ii)] if $\ell\geq k$, then either $Q_{\beta_2}^\ell\subseteq
	Q_{\beta_1}^k$ or $Q_{\beta_1}^k\cap Q_{\beta_2}^\ell=\emptyset$;
	\item [(iii)] for each $(k,\beta_1)$ and each $\ell\leq k$,
	there exists a unique $\beta_2$
	such that $Q_{\beta_1}^k\subseteq Q_{\beta_2}^\ell$;
	\item [(iv)] there exists a positive constant
	$M=M(n,\delta)$
	such that for each $(k,\beta_1)$,
\begin{eqnarray}\label{e2.3}
	1\leq \# \left\{\beta_2:\  Q_{\beta_2}^{k+1}\subseteq Q_{\beta_1}^k\right\}\leq M  \quad
	\text{and }\quad
	Q_{\beta_1}^k= \underset{ \beta_2:\  Q_{\beta_2}^{k+1}\subseteq Q_{\beta_1}^k
		 }{\bigcup} Q_{\beta_2}^{k+1} \ ;
\end{eqnarray}
	\item [(v)] for each $(k,\beta)$,
    \begin{equation}
    \label{eqn:dycube1}
    B(x_\beta^k,\delta^k/3)\subseteq Q_\beta^k\subseteq
	B(x_\beta^k,2\delta^k)=:B(Q_\beta^k)
    \end{equation}
     for  some $x_\beta^k\in Q_\beta^k$;
	\item [(vi)]if $\ell\geq k$ and $Q_{\beta_2}^\ell\subseteq
Q_{\beta_1}^k$, then
	$B(Q_{\beta_2}^\ell)\subset B(Q_{\beta_1}^k)$.
\end{enumerate}

The set $Q_\beta^k$ is called a dyadic cube of generation $k$ with
center point
 $x_\beta^k\in Q_\beta^k$. For $Q\in \mathscr{D}_{k}$, for every $\ell\geq 0$
we denote by
$$Ch^{(\ell)}(Q)=\left\{P\subset Q:\ P\in
\mathscr{D}_{k+\ell}\right\}.
$$
Obviously, $Ch^{(0)}(Q)=\{Q\}$ and $Ch^{(1)}(Q)=Ch(Q)$, which is called the children of $Q$.

Next, we recall the following construction due to T. Hyt\"onen and A. Kairema~\cite{HK}, which is a slight
elaboration
of
seminal work by M. Christ \cite{Ch}, as well as E. Sawyer and R.L. Wheeden \cite{SW}.

\begin{proposition}\label{prop2.1}
Let $(X, d)$ be  a metric space. Then there exists a system of dyadic cubes with parameter $0<\delta<1/144$.
	The construction only depends on a
fixed set of countably many points
$x_\beta^k$, having the properties that
	\begin{equation}
	\label{e2.4}
	d(x_{\beta_1}^k,x_{\beta_2}^k)\geq \delta^k \ \  (\beta_1\neq \beta_2), \qquad
\underset{\beta}{\min}
	\ d(x,x_\beta^k)<\delta^k \  \  \text{for all } \ x\in X,
	\end{equation}
and  a partial order ``$\leq $'' among the pairs $(k,\beta)$ such that
\begin{enumerate}
	\item [$\bullet$]if $d(x_{\beta_2}^{k+1},x_{\beta_1}^k)<\delta^k/6$, then $(k+1,
\beta_2)\leq (k,\beta_1)$;
	\item [$\bullet$]if $(k+1,\beta_2)\leq (k,\beta_1)$, then $d(x_{\beta_2}^{k+1},x_{\beta_1}^k)<
2\delta^k$;
 \item [$\bullet$]for every $(k+1,\beta_2)$, there is exactly one $(k,\beta_1)\geq (k+1,\beta_2)$, called its parent;
  \item [$\bullet$]for every $(k,\beta_1)$, there are between $1$ and $M$ pairs $(k+1, \beta_2)\leq (k, \beta_1)$, called
  its children.
\end{enumerate}
	\end{proposition}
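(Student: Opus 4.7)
The plan is to follow Christ's construction of dyadic cubes as refined by Hyt\"onen--Kairema in \cite{HK}. The argument splits into constructing $\delta^k$-nets of centers, installing a parent-child tree on the pairs $(k,\beta)$, assembling the cubes from the tree, and verifying the inner/outer ball inclusion (v), the last being the delicate step that forces the explicit constant $\delta<1/144$.

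First I would construct the centers. For each $k \in \mathbb{Z}$ invoke Zorn's lemma to pick a maximal $\delta^k$-separated subset $\{x_\beta^k : \beta \in \mathscr{J}_k\} \subset X$; maximality gives $\min_\beta d(x, x_\beta^k) < \delta^k$ for every $x$, so (2.4) holds. Next build the partial order. For each $(k+1, \beta_2)$ the density property furnishes some $(k, \beta_1)$ with $d(x_{\beta_2}^{k+1}, x_{\beta_1}^k) < \delta^k$; pick one arbitrarily and declare it the parent. Whenever $d(x_{\beta_2}^{k+1}, x_{\beta_1}^k) < \delta^k/6$ the $\delta^k$-separation at level $k$ forces that choice to be unique (this gives the first bullet); the second bullet is the weaker bound $d < 2\delta^k$ coming for free from $d < \delta^k$; bullet three is unique existence of the parent by construction; and bullet four follows from a packing count bounding the number of $\delta^{k+1}$-separated children lying at distance $< 2\delta^k$ from a fixed point.

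Next I would assemble the cubes. A natural first approximation is
\[
\widetilde{Q}_\beta^k \;=\; \bigcup_{\ell \geq 0}\ \bigcup_{(k+\ell,\gamma)\leq (k,\beta)} B\bigl(x_\gamma^{k+\ell},\,\delta^{k+\ell}/3\bigr),
\]
the union of small balls at every descendant center; a Voronoi-style reassignment within the tree, breaking ties consistently across all generations, converts these into pairwise disjoint Borel sets $Q_\beta^k$ satisfying (i)--(iv) by construction, and (vi) reads off immediately from the tree structure.

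The crux is the ball inclusion (v). For the outer bound, telescope along a descending chain: any descendant center $x_\gamma^{k+\ell}$ of $(k,\beta)$ satisfies $d(x_\gamma^{k+\ell},x_\beta^k)\leq \sum_{j=0}^{\ell-1} 2\delta^{k+j}\leq 2\delta^k/(1-\delta)$, and the terminal neighborhood of radius $\delta^{k+\ell}/3$ stays inside $B(x_\beta^k,2\delta^k)$ for $\delta$ small. For the inner bound, show that any $x \in B(x_\beta^k,\delta^k/3)$ is assigned to the subtree rooted at $(k,\beta)$ at every descendant level, by iterating the forced-parent rule at threshold $\delta^j/6$ via the triangle inequality. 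Both geometric series close with room to spare exactly under $\delta<1/144$. This is where the main obstacle sits: the parent-selection threshold is $\delta^j/6$ but the inclusion targets are $1/3$ and $2$, and the boundary adjustment in the cube-assembly step must simultaneously keep $B(x_\beta^k,\delta^k/3)$ inside $Q_\beta^k$ and prevent $Q_\beta^k$ from escaping $B(x_\beta^k,2\delta^k)$; the explicit constant $1/144$ is precisely the bookkeeping that makes these two goals compatible.
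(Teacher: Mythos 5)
Your outline follows the same route as the source the paper relies on for this proposition (the paper itself gives no proof: it quotes the construction of Hyt\"onen--Kairema \cite{HK}, elaborating Christ \cite{Ch}), and the first stage is fine: maximal $\delta^k$-separated sets give \eqref{e2.4}, and the child-counting bound comes from the doubling/packing argument you indicate. But there is a real gap already in the partial order. You choose the parent of $(k+1,\beta_2)$ ``arbitrarily'' among level-$k$ centers within distance $\delta^k$, and then claim the first bullet because a center within $\delta^k/6$ is unique. Uniqueness of such a close center is true (by $\delta^k$-separation), but it does not follow that your arbitrarily chosen parent \emph{is} that center: you may have selected a different center at distance, say, $0.9\,\delta^k$, and then $d(x_{\beta_2}^{k+1},x_{\beta_1}^k)<\delta^k/6$ with $(k+1,\beta_2)\not\leq(k,\beta_1)$, violating the first bullet. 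The selection rule must be required to prefer a center within $\delta^k/6$ (equivalently, a nearest center) whenever one exists; this is exactly how \cite{HK} set it up.

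The more serious gap is the assembly of the cubes. Your ``first approximation'' $\widetilde{Q}_\beta^k$, the union of balls $B(x_\gamma^{k+\ell},\delta^{k+\ell}/3)$ over descendant centers, does not cover $X$: at each level $j$ the guaranteed covering radius in \eqref{e2.4} is $\delta^j$, not $\delta^j/3$, so $\bigcup_\beta \widetilde{Q}_\beta^k$ has gaps and no redistribution of its points can produce property (i). Every point of $X$ must be assigned, and a level-by-level ``Voronoi-style'' assignment does not work either, because the nearest level-$(j+1)$ center of a point need not be a child of its nearest level-$j$ center, so nestedness (ii), (vi) --- and with it (iii), (iv) --- is precisely what has to be proved, not obtained ``by construction''. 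In the Christ/Hyt\"onen--Kairema argument this is the substantive part: one defines closed cubes as closures of the sets of descendant centers (and open cubes as suitable complements), proves by induction over generations that the inner balls $B(x_\beta^k,\delta^k/3)$ are disjoint from all other cubes despite the accumulated parent--child drift $2\delta^k+2\delta^{k+1}+\cdots$, and only then extracts Borel cubes sandwiched between the open and closed versions so that (i)--(vi) hold simultaneously and coherently across scales. Your final paragraph asserts that these estimates ``close with room to spare'' under $\delta<1/144$, but that computation, together with the coherent sandwiching, is the heart of the proof and is missing. As it stands the proposal is an outline of the cited construction with its two delicate steps replaced by assertions.
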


 We will refer the sets $\{x_\beta^k\}_{k, \beta\in{\mathscr J}_k}$ of dyadic points as the set of {\it reference points}.
Now fix a set of reference points $\{x_\beta^k:\  k\in{\mathbb Z}, \beta\in{\mathscr J}_k \}$ and
 some   $0<\delta<1/144$.
  From \cite[Section 4]{HK}, reference points $x_{\beta_1}^k$ and $x_{\beta_2}^k$,
$\beta_1\neq \beta_2$,
	of the same generation are  {\it in conflict} if
	$ d(x_{\beta_1}^k, x_{\beta_2}^k)<\delta^{k-1}.$
  Reference points $x_{\beta_1}^k$ and $x_{\beta_2}^k$, $\beta_1\neq \beta_2$,
  of  the same
generation
	are { \it neighbors} if there occurs a conflict between their children.
It is known that
for every reference point $x_\beta^k$, the number of neighbors of $x_\beta^k$ is bounded from above by a fixed
constant, say $N$.
Then every point
$x_{\beta_1}^k$
gets a primary label
\begin{equation}
\label{e4.1}
{\bm{label_1}}(k,\beta_1):=\ell
\end{equation}
not bigger than $N=N(n,\delta)$, such that  if $(k,\beta_1)$ and $(k,\beta_2)$, $\beta_1
\neq \beta_2$, have the
same label
$\ell\in \{0,1,\cdots, N\}$, they are not neighbors.
Next we label the reference points $x_\beta^{k+1}$ of the next generation
$k+1$ with
duplex labels: If ${\bm{label_1}}(k,\beta_1)=\ell$, each of its children
$(k+1,\beta_2)
\leq (k,\beta_1)$
gets a
different duplex label
\begin{equation}
\label{e4.2}
{\bm{label_2}}(k+1,\beta_2):=(\ell,m), \  \  \  \  m(\beta_2)\in \{1,2,\cdots, M\}.
\end{equation}
Then adjacent dyadic systems are constructed by the following specific
{\it selection rule}:
 Fix $(\ell,m)\in \{0,\cdots, N\}$
 $\times \{1,
\cdots, M\}$. For
	every index pair $(k,\beta_1)$, check whether there exists $(k+1,\beta_2)\leq
(k,\beta_1)$ with
	label pair $(\ell,m)$. If so, decree that $z_{\beta_1}^k:=x_{\beta_2}^{k+1}$.
Otherwise, pick some
	$(k+1,\beta_2)\leq (k,\beta_1)$ with $\rho(x_{\beta_2}^{k+1},x_{\beta_1}^k)<\delta^{k+1}$
and
	decree $z_{\beta_1}^k:=x_{\beta_2}^{k+1}$.
Define
\begin{eqnarray}
\label{e2.7a}
\Lambda: \{0,\cdots, N\}\times \{1,\cdots, M\}&\to& \{1,\cdots, K\}\subset
\mathbb{N} \notag\\
(\ell,m) &\mapsto& b
\end{eqnarray}
which is a bijection. We identify $b=\Lambda(\ell,m)$ with $(\ell,m)$. Each $b$ gives rise
to a set $ \Big\{{^b z_\beta^k}:\ k\in \mathbb{Z},\ \beta\in \mathscr{J}_k \Big\}$ of new
dyadic
points associated with the duplex label $(\ell,m)=b$.
 From these new dyadic points $\left\{{^b z_\beta^k}:\ k\in \mathbb{Z},\ \beta\in \mathscr{J}_k\right\}$,
  we can construct adjacent
dyadic systems
\begin{eqnarray}
\label{e4.33b}
\left\{\mathcal{D}^{b}: b=1,2,\cdots, K\right\},
\end{eqnarray}
where $\mathcal{D}^b=\bigcup_{k\in \mathbb{Z}}\mathcal{D}_k^b$,
  $\mathcal{D}_k^b=\{{^b}Q_\beta^k:\  \beta\in \mathscr{J}_k\}$, and
\begin{equation}
\label{eqn:dycube2}
B(^b z_\beta^k, \delta^k/12)\subset {^b Q_\beta^k} \subset B({^b z_\beta^k},4\delta^k).
\end{equation}
For more details, we refer the reader to \cite[Section 4]{HK}, and in the sequel, we assume $0<\delta<1/144$.

Throughout the article,   if $^bQ_{\beta_1}^k\in \
\mathcal{D}^b$ has only one child, namely $^bQ_{\beta_2}^{k+1}$, although $^bQ_{\beta_1}^k$
 and $^bQ_{\beta_2}^{k+1}$ are the same set of points,  $^bQ_{\beta_1}^k$ and
$^bQ_{\beta_2}^{k+1}$ are   seen  as  two  different cubes. Then we have the following proposition.

\begin{proposition}
\label{prop4.2}
There exists an injective mapping
$$E_2:\  \mathscr{D}\to  \mathcal{D}=\bigcup_{b=1}^K \mathcal{D}^b,
$$
such that for every $Q_\beta^k\in \mathscr{D}$,
 $E_2(Q_\beta^k)\in \mathcal{D}_{k-1}^{b}$ for some
$b\in \{1,2,\cdots, K\}$ with the property:
\[
3Q_\beta^k:=\left\{y\in X:\   \
d(y,Q_\beta^k)\leq \delta^k\right\}\subset E_2(Q_\beta^k),\  \   \  \mu(E_2(Q_\beta^k))\leq C\mu(Q_\beta^k)
\]
for some  constant $C=C(n)$ independent of $(\beta,k)$.
\end{proposition}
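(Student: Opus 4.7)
The plan is to define $E_2$ canonically using the duplex labels from \eqref{e4.2} together with the selection rule preceding \eqref{e2.7a}, so that each cube $Q_\beta^k \in \mathscr{D}$ is sent to the unique cube of the adjacent system $\mathcal{D}^b$ at generation $k-1$ whose new reference point equals the center $x_\beta^k$ of $Q_\beta^k$ itself.

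Concretely, given $Q_\beta^k \in \mathscr{D}$, let $b = \Lambda(\bm{label_2}(k,\beta)) \in \{1,\dots,K\}$ be the (identified) duplex label of $(k,\beta)$, and let $(k-1,\beta_1)$ be its unique parent in $\mathscr{D}$ (property (iii) of the dyadic system). Since $(k,\beta)\le (k-1,\beta_1)$ carries duplex label $b$, the selection rule decrees ${^bz^{k-1}_{\beta_1}}=x_\beta^k$, and I set
$$
E_2(Q_\beta^k) := {^bQ^{k-1}_{\beta_1}} \in \mathcal{D}_{k-1}^b.
$$
First I verify the inclusion $3Q_\beta^k\subset E_2(Q_\beta^k)$: combining \eqref{eqn:dycube1} (which gives $3Q_\beta^k\subset B(x_\beta^k,3\delta^k)$) with \eqref{eqn:dycube2} (which gives $B(x_\beta^k,\delta^{k-1}/12)\subset {^bQ^{k-1}_{\beta_1}}$) works because $\delta<1/144$ forces $\delta^{k-1}/12 = \delta^k/(12\delta) > 12\delta^k > 3\delta^k$. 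The measure bound then follows from the outer inclusion ${^bQ^{k-1}_{\beta_1}}\subset B(x_\beta^k,4\delta^{k-1})$ in \eqref{eqn:dycube2} combined with $Q_\beta^k\supset B(x_\beta^k,\delta^k/3)$ from \eqref{eqn:dycube1} and the doubling condition \eqref{e2.2}, yielding $\mu(E_2(Q_\beta^k))\le C\mu(Q_\beta^k)$ with $C$ depending only on $n$.

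The main point to verify, and the only subtle one, is injectivity. Suppose $E_2(Q_\beta^k)=E_2(Q_{\beta'}^{k'})$. The image lies in $\mathcal{D}^b_{k-1}$ for some unique generation, so $k=k'$; the image lies in a unique adjacent system, so the two duplex labels coincide and equal the common $b$; finally the image is ${^bQ^{k-1}_{\beta_1}}$ for a unique parent index $\beta_1\in \mathscr{J}_{k-1}$, so $(k,\beta)$ and $(k,\beta')$ share the same $\mathscr{D}$-parent. Since by \eqref{e4.2} distinct children of a common parent receive distinct duplex labels, the equality of labels forces $\beta=\beta'$. This argument relies only on the explicit Hyt\"onen--Kairema labeling recalled in Proposition~\ref{prop2.1} and on $\delta<1/144$; no further analytic input is needed beyond doubling.
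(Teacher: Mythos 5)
Your proposal is correct and takes essentially the same approach as the paper's proof: you define $E_2$ exactly as the paper does (via the duplex label, the unique parent, and the selection rule forcing the new center to be $x_\beta^k$), and you verify the containment $3Q_\beta^k\subset E_2(Q_\beta^k)$ and the measure bound with the same ball comparisons from \eqref{eqn:dycube1}, \eqref{eqn:dycube2}, $\delta<1/144$ and doubling. The only cosmetic difference is the same-generation injectivity step, which the paper settles by noting that the image cubes have distinct center points $x_\beta^k$, whereas you deduce $\beta=\beta'$ from the distinctness of duplex labels among children of a common parent; both arguments are valid.
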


\begin{proof}
 Observe that for every $Q_{\beta_1}^k\in \mathscr{D}$,  $(k,\beta_1)$ has
 a unique
${\bm{label_2}}(k,\beta_1)=(\ell,m)$,
and there exists a unique  $\beta_2\in \mathscr{J}_{k-1}$
such that $(k,\beta_1)
\leq (k-1,\beta_2)$. We  set $b={\Lambda({\bm{label_2}}(k,\beta_1))}$, where $\Lambda$ is the mapping in \eqref{e2.7a}.
According to the selection rule,
there exists ${^b Q_{\beta_2}^{k-1}}\in \mathcal{D}^{b}$ with center point ${^b z_{\beta_2}^{k-1}}=x_{\beta_1}^k$. Define
\begin{eqnarray}
\label{e4.4}
E_2: \mathscr{D} &\to& \bigcup_{b=1}^K \mathcal{D}^{b}=\bigcup_{b=1}^K
\bigcup_{k\in \mathbb{Z}} \mathcal{D}_k^b \notag \\
Q_{\beta_1}^k  &\mapsto&   {^b}Q_{\beta_2}^{k-1},\ \ \  b={\Lambda({\bm{label_2}}(k,\beta_1))},
 \  \   (k,\beta_1)\leq (k-1,\beta_2).
\end{eqnarray}

Obviously,  for every $Q_{\beta_1}^k\in \mathscr{D}$ in level $k$ with center $x_{\beta_1}^k$,
there exists a unique
$b\in \{1,2,\cdots, K\}$ such that $E_2(Q_{\beta_1}^k)={^b Q_{\beta_2}^{k-1}}\in \mathcal{D}_{k-1}^{b}$.
It follows from \eqref{eqn:dycube2} that
$$
B(x_{\beta_1}^k,\delta^{k-1}/12)\subseteq E_2(Q_{\beta_1}^k)={^b Q_{\beta_2}^{k-1}}\subseteq
	B(x_{\beta_1}^k,4\delta^{k-1}),
$$
and one can apply \eqref{eqn:dycube1} and $\delta<1/144$ to deduce
\[
B(x_{\beta_1}^k, \delta^k/3)\subset Q_{\beta_1}^k \subset 3Q_{\beta_1}^k=\left\{y\in X:\   \
d(y,Q_{\beta_1}^k)\leq \delta^k\right\}\subset B(x_{\beta_1}^k, 3\delta^k)\subset B(x_{\beta_1}^k ,\delta^{k-1}/12).
\]
The doubling property \eqref{e2.1} implies that there exists a constant $C=C(n)$
such that $\mu(E_2(Q_{\beta_1}^k))\leq C\mu(Q_{\beta_1}^k)$ for every $Q_{\beta_1}^k\in \mathscr{D}$.

Next,  we show that  $E_2$ is injective.
Indeed, for any  $Q_{\beta_3}^k,Q_{\beta_4}^k\in \mathscr{D}$ with
  $\beta_3\neq \beta_4$,
 then  $E_2(Q_{\beta_3}^k)\neq E_2(Q_{\beta_4}^k)$ since their center points are
 different. For any $Q_{\beta_5}^k,Q_{\beta_6}^\ell \in \mathscr{D}$
 with $k\neq \ell$, even though $E_2(Q_{\beta_5}^k)$ and $E_2(Q_{\beta_6}^\ell)$
  are both in some $\mathcal{D}^{b}$, $b\in \{1,2,\cdots, K\}$, one can
see $E_2(Q_{\beta_5}^k)\in \mathcal{D}_{k-1}^b$, while
  $E_2(Q_{\beta_6}^\ell)\in \mathcal{D}_{\ell-1}^b $.
Hence,    the proof of Proposition~\ref{prop4.2} is end.
\end{proof}

\medskip

We would like to mention that in the case when $X=\mathbb{R}^n$,
a cube $Q\subset {\mathbb R}^n$ is dyadic if it is of the form
$$
  Q=\left({j_1\over 2^k}, {j_1+1\over 2^k}\right) \times \cdots \times
\left({j_n\over 2^k}, {j_n+1\over 2^k}\right)
$$
for some integers $k$ and $j_i, i=1, \cdots, n.$ We denote   the collection of all dyadic cubes by $\mathscr{D}$.
Let $\mathcal{D}$ be the collection of all triples of dyadic cubes.
It is known (\cite[Lemma~2.1]{Wi}) that there exist disjoint families
$\mathcal{D}^1,\cdots, \mathcal{D}^{3^n}$ such that $\mathcal{D}=
\bigcup_{b=1}^{3^n} \mathcal{D}^b$. In this case,
one can define  the mapping $E_2: \mathscr{D}   \to  \bigcup_{b=1}^{3^n}\mathcal{D}^b$ in \eqref{e4.4} as follows:
\begin{eqnarray}\label{e4.5}
E_2(Q)=    3Q
\end{eqnarray}
for every $Q\in \mathscr{D}$. Here $3Q$ denotes the cube concentric with $Q$ while with sidelength three times as big.


\medskip
\noindent{\bf 2.2.\ Finite propagation speed property for wave equation.}
Suppose that $L$ is a densely-defined operator on $L^2(X)$.
It is known (see for instance \cite{CGT, CS, Si04} that if $L$ satisfies
\textbf{(H1)} and \textbf{(H2)}, then $L$ satisfies the finite propagation speed
property, that is, there exists a finite, positive constant
$c_0$ with the property that the Schwartz kernel $K_{\cos(t\sqrt{L})}$ of
$\cos(t\sqrt{L})$ satisfies
$$\leqno{\rm (FS)} \hspace{3cm}
\text{supp }K_{\cos(t\sqrt{L})} \subset \{(x,y)\in X\times X:\ d(x,y)
\leq c_0 t\}.
$$
The precise value of $c_0$ is non-essential, and throughout the article we will
choose $c_0=1$. By the Fourier inversion formula, whenever $F$ is an even bounded
Borel function with $\widehat{F}\in L^1(\mathbb{R})$, we can write $F(\sqrt{L})$ in
terms of $\cos(t\sqrt{L})$. Concretely, we have
\[F(\sqrt{L})=(2\pi)^{-1} \int_{-\infty}^\infty \widehat{F}(t)\cos(t\sqrt{L})dt,\]
which, combined with condition  (FS), gives
\begin{equation}
\label{e2.5}
K_{F(\sqrt{L})}(x,y)=(2\pi)^{-1} \int_{d(x,y)\leq |t|}
\widehat{F}(t)K_{\cos(t\sqrt{L})}(x,y)dt.
\end{equation}

\smallskip

\begin{lemma}
	\label{le2.2}
	Suppose that $L$ is a
densely-defined operator on $L^2(X)$ satisfying \textbf{(H1)} and \textbf{(H2)}.
Let $\rho>0$, $\phi\in C_0^\infty(\mathbb{R})$ be even, ${\rm{supp }}\
	\phi \in (-\rho,\rho)$ and $\varphi={\widehat\phi}$. For every $t>0$ and
	$k=0,1,2,\cdots$, $\psi_{k}(t\sqrt{L})=(t^2 L)^k\varphi(t\sqrt{L})
	$
	is defined by the spectral
theory. Then for every
	$k=0,1,2,\cdots$, we have
	
	\begin{itemize}
\item[(i)]
The kernel $K_{ \psi_k(t\sqrt{L})}(x,y)$
of  $\psi_k(t\sqrt{L}) $   satisfies
$$
	{\rm{supp }}\  K_{\psi_k(t\sqrt{L})} \subset \big\{(x,y)\in X
\times X :\ d(x,y)\leq \rho t  \big\}
$$
and
$$
	\left|K_{  \psi_{k}(t\sqrt{L})}(x,y)\right| \leq \f{C(k,\rho)}{\mu(B(x,t))}, \ \ \ \forall t>0, \ \ \forall x, y\in X;
$$

	\item[(ii)]
The kernel $K_{\psi_k(s\sqrt{L})\psi_k(t\sqrt{L}) }(x,y)$ of $\psi_k(s\sqrt{L})\psi_k(t\sqrt{L})$
satisfies
$$\big|K_{\psi_k(s\sqrt{L})\psi_k(t\sqrt{L})}(x,y)\big|
\leq   \min\left(\frac{s}{t}, \frac{t}{s}\right)\, \frac{C(k,n,\rho)}{ \mu(B(x,\max{(s,t)}))},
 \ \ \ \forall s, t>0, \ \ \forall x, y\in X;
$$

	\item[(iii)]
 In addition, we assume that $L$  satisfies condition   \textbf{(H3)} with some index $\theta\in (0, 1]$.
 Then the kernel $K_{ \psi_k(t\sqrt{L}) }(x,y)$ of $ \psi_k(t\sqrt{L})$
satisfies
\begin{eqnarray}\label{e2.6}
\big|K_{ \psi_k(t\sqrt{L})}(x,y)- K_{ \psi_k(t\sqrt{L})}(x',y)\big|
\leq   C(k,n,\rho) \min \left\{ \Big(\f{d(x,x')}{ {t}}\Big)^\theta, 1\right\}
\f{1}{\mu(B(y, {t}))}
 \end{eqnarray}
 for all $x, x', y\in X$ and all $t>0$.
\end{itemize}
\end{lemma}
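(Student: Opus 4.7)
The plan is to establish (i), (ii), (iii) in order, leveraging the standard functional-calculus toolkit for nonnegative self-adjoint operators with Gaussian heat kernel bounds.

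For (i), write $\psi_k(t\sqrt L)=G_t(\sqrt L)$ with $G_t(\lambda)=(t\lambda)^{2k}\varphi(t\lambda)$. Since $\varphi=\widehat\phi$ with $\phi\in C_0^\infty(-\rho,\rho)$, Fourier inversion identifies $\widehat\varphi$ with a constant multiple of $\phi$, supported in $(-\rho,\rho)$; the polynomial factor $\lambda^{2k}$ corresponds to $2k$-th derivatives on the Fourier side and preserves compact support. After scaling in $t$, $\widehat{G_t}$ is supported in $(-\rho t,\rho t)$, and inserting this into the finite propagation identity \eqref{e2.5} gives the support statement. The pointwise bound $|K_{\psi_k(t\sqrt L)}(x,y)|\le C(k,\rho)/\mu(B(x,t))$ then follows from the standard spectral-multiplier argument: the operator norm $\|G_t(\sqrt L)\|_{2\to 2}\le\|G\|_\infty<\infty$ is uniform in $t$, and combining this with \textbf{(H2)} yields the kernel estimate on the support.

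For (ii), assume $s\le t$ and rewrite
\[
\psi_k(s\lambda)\,\psi_k(t\lambda)=(s/t)^{2k}\,H_{s,t}(\lambda),\qquad H_{s,t}(\lambda):=(t\lambda)^{4k}\varphi(s\lambda)\varphi(t\lambda).
\]
The Fourier transform of $H_{s,t}$ is the convolution of two compactly supported distributions and is thus supported in $(-\rho(s+t),\rho(s+t))\subset(-2\rho t,2\rho t)$. Applying the argument of (i) to $H_{s,t}(\sqrt L)$ gives $|K_{H_{s,t}(\sqrt L)}(x,y)|\le C(k,n,\rho)/\mu(B(x,t))$; multiplying by $(s/t)^{2k}\le s/t$ (for $k\ge 1$; the $k=0$ case is treated similarly by Taylor-expanding $\varphi$ at the origin to extract an explicit factor of $s\lambda$) and using doubling to identify $\mu(B(x,t))$ with $\mu(B(x,\max(s,t)))$ produces the stated bound.

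For (iii), the regime $d(x,x')\ge t$ is immediate from (i) together with the support condition and doubling (to pass from $\mu(B(x,t))$ or $\mu(B(x',t))$ to $\mu(B(y,t))$ when $x$ or $x'$ lies in $B(y,\rho t)$). The serious case is $d(x,x')<t$, where a genuine $\theta$-H\"older gain must be produced from \textbf{(H3)}. My plan is to transfer the H\"older continuity of the heat kernel to $K_{\psi_k(t\sqrt L)}$ via a composition
\[
K(x,y)-K(x',y)=\int\bigl[K_A(x,z)-K_A(x',z)\bigr]\,K_B(z,y)\,d\mu(z),
\]
with $B=e^{-ct^2L}$ contributing the H\"older factor from \textbf{(H3)} and $A$ carrying the remaining spectral data. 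The main obstacle is finding an admissible such factorization: the naive choice $A=\psi_k(t\sqrt L)\,e^{ct^2L}$ fails because $\varphi$ is of Paley--Wiener type and does not decay faster than any Gaussian, so $A$ is unbounded on $L^2$. The resolution is to decompose the spectral multiplier $G_t$ into pieces matched to the Gaussian scale and apply \textbf{(H3)} to each heat-kernel piece, using the compactness of $\operatorname{supp}\widehat{G_t}$ from (i) together with the off-diagonal Davies--Gaffney consequences of \textbf{(H2)} to ensure the sum converges with exponent $\theta$.
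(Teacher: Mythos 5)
Your parts (i) and (ii) are in line with the standard arguments (the paper itself only cites \cite{HLMMY, Si04} for (i) and \cite{SY16} for (ii)), with one caveat in (ii): for $k\ge 1$ the extraction of $(s/t)^{2k}\le s/t$ works, but your parenthetical fix for $k=0$ cannot work as stated, since Taylor expansion gives $\varphi(s\lambda)=\varphi(0)+O(\min(1,s\lambda))$ and $\varphi(0)=\widehat{\phi}(0)\neq 0$ in general; the term $\varphi(0)\varphi(t\lambda)$ carries no smallness in $s/t$ at all (indeed the $k=0$ assertion in (ii) requires $\varphi(0)=0$, which is automatic in the paper's application since there $\psi(x)=x^2\widehat{\phi}(x)$, but your proposed repair does not supply it).

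The genuine gap is in (iii). You correctly identify the strategy (transfer the H\"older continuity of $p_t$ through a composition) and correctly diagnose why the naive factorization with $e^{ct^2L}$ fails, but the step that actually produces the $\theta$-gain is left as an unexecuted plan: ``decompose $G_t$ into pieces matched to the Gaussian scale and use Davies--Gaffney to sum'' names neither the decomposition nor the mechanism by which (H3) is applied to each piece, nor how one handles the scales at which the heat-kernel scale is \emph{smaller} than $d(x,x')$, where (H3) is simply not applicable and a different argument is needed. The paper's proof resolves exactly this with a concrete and much simpler device: write $\psi_k(u)=(1+u^2)^{-m}\eta_{m,k}(u)$ with $\eta_{m,k}(u)=(1+u^2)^m\psi_k(u)$ (which still satisfies the bounds of (i)), and use the subordination identity \eqref{mmmm}, $(I+t^2L)^{-m}=\frac{1}{(m-1)!}\int_0^\infty e^{-st^2L}e^{-s}s^{m-1}\,ds$, so that the difference $K_{\psi_k(t\sqrt L)}(x,y)-K_{\psi_k(t\sqrt L)}(x',y)$ is controlled by differences of heat kernels $p_{t^2s}(x,z)-p_{t^2s}(x',z)$ averaged in $s$ against $e^{-s}s^{m-1}\,ds$, composed with $K_{\eta_{m,k}(t\sqrt L)}(z,y)$. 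The $s$-integral is then split at $(d(x,x')/t)^2$: for $s\ge (d(x,x')/t)^2$ one has $d(x,x')\le \sqrt{t^2 s}$ and (H3) yields the factor $(d(x,x')/(\sqrt{s}\,t))^\theta$, while for $s<(d(x,x')/t)^2$ one only uses the Gaussian bound together with $\int_0^{(d(x,x')/t)^2}s^{m-1-n/2}e^{-s}ds\le C(d(x,x')/t)^{2m-n}$, and the choice $m\ge n/2+1$ (plus $d(x,x')\le t$) makes this exponent at least $\theta$. If you want to salvage your dyadic-in-frequency route you would have to supply precisely this kind of argument for the ``bad'' scales (size bounds plus rapid decay of $\varphi$ in the frequency parameter to absorb the loss), so as written the key step of (iii) is missing rather than merely compressed.
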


\begin{proof}  	The proof of (i) is standard; see for example, \cite{HLMMY, Si04}.
The proof of (ii) can be obtained by a minor modification with that of \cite[Lemma 2.3]{SY16}, and we skip it here.

We now prove (iii). For every $m\geq  n/2+1$ and every $k\in{\mathbb N}$, we write $\eta_{m,k}(u)=(1+u^2)^m\psi_k(u)$.
Then the kernel $K_{ \eta_{m,k}(t\sqrt{L}) }(x,y)$ of $ \eta_{m,k}(t\sqrt{L})$
satisfies property (i) above.  On the other hand, we rewrite
$\psi_k(u) = (1+u^2)^{-m} \eta_{m,k}(u).$ Observe that
for any $m\in {\Bbb N}$, we have   the relationship
\begin{eqnarray}\label{mmmm}
  (I+ t^2L)^{-m}={1\over  (m-1)!} \int\limits_{0}^{\infty}e^{- st^2L}e^{-s} s^{m-1} ds
\end{eqnarray}

 To show \eqref{e2.6}, we consider two cases.

 \smallskip

 \noindent
 {\it Case (1): $d(x, x')\leq  t$.} \

  \smallskip
 In this case, we use   formula  \eqref{mmmm},
 together with property (i)   of this lemma, \eqref{e2.2} and  condition   \textbf{(H3)} to obtain
\begin{eqnarray*}
& &\hspace{-1.2cm}\left|K_{\psi_k(t\sqrt{L})} (x,y)-K_{\psi_k(t\sqrt{L})} (x',y)\right| \notag\\
&\leq& C  \int_{X} \left( \int_0^\infty e^{-s}s^{m-1}
\left|p_{t^2 s} (x,z)-p_{t^2 s}(x',z)\right|ds\right)\left|K_{(I+t^2 L)^m
	\psi_k(t\sqrt{L})} (z,y)\right|d\mu(z)\notag\\
&\leq & C(k,\rho) \int_{{d(z,y)\leq \rho t}} \f{1}{\mu(B(z,t))} \int_0^{\left(d(x,x')/t\right)^2}
 \f{1}{\mu(B(z,\sqrt{s}t))}
e^{-s}s^{m-1} dsd\mu(z)\notag\\
& & +\,  C(k,\rho) \int_{{d(z,y)\leq \rho t}} \f{1}{\mu(B(z,t))} \int_{\left(d(x,x')/t\right)^2}^\infty
\Big(\f{d(x,x')}{\sqrt{s}t}\Big)^\theta \f{1}{\mu(B(z,\sqrt{s}t))} e^{-s}
s^{m-1}dsd\mu(z) \\
  &=&I+II.
\end{eqnarray*}
For term $I$, we use \eqref{e2.2}  and elementary integration,
to verify that for $m\geq  n/2+1$,
\begin{eqnarray*}
I&\leq & C(k,\rho) \f{1}{\mu(B(y,t))} \int_{{d(z,y)\leq \rho t}} \f{1}{\mu(B(z, t))}d\mu(z)  \int_0^{\left(d(x,x')/t\right)^2}
s^{-n/2}
e^{-s}s^{m-1} ds\\
 &\leq&   C(k,n,\rho)  \left(\f{d(x,x')}{t}\right)^{2m-n}\f{1}
{\mu(B(y,t))}.
\end{eqnarray*}
Now for the term $II$,
\begin{eqnarray*}
II&\leq & C(k,\rho)\left(\f{d(x,x')}{t}\right)^{\theta} \f{1}{\mu(B(y,t))} \int_{{d(z,y)\leq \rho t}} \f{1}{\mu(B(z, t))} d\mu(z)
  \int_{\left(d(x,x')/t\right)^2}^\infty
e^{-s}s^{m-1} ds \\
&\leq & C(k,n,\rho)  \left(\f{d(x,x')}{t}\right)^{\theta}\f{1}
{\mu(B(y,t))},
\end{eqnarray*}
which yields  \eqref{e2.6} in this case $d(x, x')\leq  t$.

 \smallskip

 \noindent
 {\it Case (2): $d(x, x')> t$.} \

 \smallskip

In this case we  apply  property (i)   of this lemma  to see that
\begin{eqnarray*}
\left|K_{\psi_k(t\sqrt{L})} (x,y)-K_{\psi_k(t\sqrt{L})} (x',y)\right|
&\leq& |K_{\psi_k(t\sqrt{L})} (x,y)|+|K_{\psi_k(t\sqrt{L})} (x',y)|\\
&\leq& \f{C(k,\rho)}{\mu(B(y,t))},
\end{eqnarray*}
which gives our desired estimate.  This proves (iii) and completes the proof of Lemma~\ref{le2.2}.
 \end{proof}

 \medskip

In the end of this section, we  mention that our assumptions
\textbf{(H1)}, \textbf{(H2)} and \textbf{(H3)} on $L$
 hold for large variety of second order self-adjoint operators.
We list some of them:

1) 
 Let $A(x)$ be an $n\times n$
matrix function with real symmetric, bounded measurable  entries  on
${\mathbb R}^n$ satisfying the ellipticity condition
\begin{eqnarray*}
\|A\|_{\infty}\leq \lambda^{-1}\ \ \ {\rm and}\ \ \
\  A(x)\xi\cdot\xi\geq \lambda |\xi|^2
\end{eqnarray*}
for some constant $\lambda\in (0,1)$,  for all $\xi\in{\mathbb R}^n$
and for almost all $x \in \mathbb{R}^n$. Let
$L$ be a second order elliptic operator
in divergence form $L=-{\rm div} (A\nabla)$ on $L^2({\mathbb R}^n).$
Then properties  \textbf{(H1)}, \textbf{(H2)} and \textbf{(H3)}
 for operator $L$  are always satisfied, see \cite{AR, Da,  Ou}.

\medskip

2)   Let $L=-\Delta+V$ be some Schr\"odinger operator on
$\mathbb{R}^n$, $n\geq 3$, where $V$ is a nonnegative potential and  belongs to  the reverse H\"older
class $(RH)_{q}$ for some $q\geq n/2$, i.e.,
\begin{equation*}
\left(\frac{1}{|B(x, r)|}\int_{B(x, r)} V(y)^{q}~dy\right)^{1/q}\leq
\frac{C}{|B(x, r)|}\int_{B(x, r)}V(y)~dy,
\end{equation*}
where $|B(x,r)|$ denotes the Lebesgue measure of ball $B(x,r)$.
It's well known that $L$ satisfies \textbf{(H1)}, \textbf{(H2)} and \textbf{(H3)}, see \cite{DZ03,    Sh95}.

\medskip

3)\ Consider a complete Riemannian
manifold  $M$ with the
doubling property. One defines $\Delta$, the Laplace-Beltrami operator, as a
self-adjoint positive operator on $L^2(M)$.
If $M$ satisfies
suitable geometric
conditions, e.g. $M$ has nonnegative Ricci curvature,
 then $\Delta$ satisfies properties  \textbf{(H1)}, \textbf{(H2)} and \textbf{(H3)}, see \cite{ACDH, LY, Ou}.

 \bigskip

\section{The discrete square function}
\setcounter{equation}{0}

In this section, we assume that  $X$ satisfies the volume doubling condition, and $L$ is a
densely-defined operator
on $L^2(X)$
satisfying \textbf{(H1)}, \textbf{(H2)} and \textbf{(H3)}.
For a fixed   $\rho>0$,  we  let  $\phi\in C_0^\infty(\mathbb{R})$ be real-valued and even, ${\rm{supp }}\
	\phi \in (-\rho,\rho)$, and set
 $\psi(x):=x^2\big(\widehat\phi\big)(x),\
x\in \mathbb{R}.
 $  By the spectral theory (see for example \cite{Yo}), for every
$f\in L^2(X)$, we have
the following Calder\'on reproducing formula
\begin{eqnarray*}
f  &=&c_{\psi} \lim_{\substack{\varepsilon\to 0+ \\  N\to \infty}}
 \int_{\varepsilon}^N  \psi(t\sqrt{L})
 t^2L e^{-t \sqrt{L}}f \f{dt}{t}
\end{eqnarray*}
with the limit converging in $L^2(X)$.

 Fix
$\delta\in (0, 1/144)$   as in Proposition~\ref{prop2.1}. There exists  a system of dyadic cubes  {with parameter $\delta$}
such that
$$
\mathscr{D}=\bigcup_{k\in \mathbb{Z}} \mathscr{D}_k,\qquad  \mathscr{D}_k=
\left\{Q_\beta^k:\
\beta\in \mathscr{J}_k\right\}.
$$
For every  $Q_\beta^k\in \mathscr{D}_k,\ k\in \mathbb{Z},\ \beta\in
\mathscr{J}_k$,
define
\begin{equation*}
T(Q_\beta^k) :=\left\{(x,t)\in X\times (0, \infty):\ \ x\in Q_\beta^k,\
 {\rho^{-1}\delta^{k+1} \leq t <\rho^{-1}\delta^{k}} \right\},
\end{equation*}
{where $\rho>0$ is the parameter  appeared in the support of $\phi$.}
We obtain a decomposition for $X\times (0, \infty)$ as follows:
$$
X\times (0, \infty)=\bigcup_{k\in \mathbb{Z}} \bigcup_{\beta\in \mathscr{J}_k}
  T(Q_\beta^k).
$$
Then  $T(Q_{\beta_1}^k)\cap T(Q_{\beta_2}^\ell)=\emptyset$
for arbitrary $Q_{\beta_1}^k,Q_{\beta_2}^\ell\in
\mathscr{D}$ with $\beta_1\neq \beta_2$ or $k\neq \ell$.
Therefore, for every $f\in L^2(X)$,  we use A. Calder\'on trick (\cite{C}) to write
\begin{eqnarray}\label{e3.1}
f(x)&=& c_{\psi}  \iint_{X\times (0, \infty)} K_{\psi(t\sqrt{L})}(x,y)
\left( t^2L e^{-t \sqrt{L}}f(y) \right) \f{d\mu(y)dt}{t} \notag \nonumber\\
&=&c_{\psi} \sum_{k\in \mathbb{Z}}\sum_{\beta\in \mathscr{J}_k}
 \iint_{T(Q_\beta^k)}
K_{\psi(t\sqrt{L})} (x,y) \left(t^2Le^{-t \sqrt{L}}f(y)\right)\f{d\mu(y)dt}{t}\nonumber\\
&=:&\sum_{k\in \mathbb{Z}}\sum_{\beta\in \mathscr{J}_k}  a_{Q_\beta^k}(f)(x),
\end{eqnarray}
where
\begin{equation}
\label{e3.3}
a_{Q_\beta^k}(f)(x)=c_\psi  \iint_{T(Q_\beta^k)}
K_{\psi(t\sqrt{L})} (x,y) \left(t^2Le^{-t \sqrt{L}}f(y)\right)\f{d\mu(y)dt}{t}.
\end{equation}
Set
\begin{equation}
\label{e3.2}
\lambda_{Q_\beta^k} =  \left( \iint_{T(Q_\beta^k)} \left|t^2
Le^{-t \sqrt{L}}f(y)
\right|^2 \f{d\mu(y)dt}{t}\right)^{1/2}.
\end{equation}
We have the following result.

\begin{lemma}
\label{le3.1}
Suppose  that $L$ is a
densely-defined operator on $L^2(X)$ satisfying conditions \textbf{(H1)}, \textbf{(H2)} and \textbf{(H3)}
with some  $\theta\in (0, 1]$.
Then for every $k\in {\mathbb Z}$ and $ \beta_1\in \mathscr{J}_k$, the function
  $a_{Q_{\beta_1}^k}(f)$  satisfies
the following properties:
\begin{enumerate}[(i)]
  \item $\displaystyle {\rm{supp }}\  a_{Q_{\beta_1}^k}(f)
  \subset 3Q_{\beta_1}^k
  \subset   B(x_{\beta_1}^k,3\delta^k)\subset E_2(Q_{\beta_1}^k)$;
  \vspace{1ex}

  \item $\|a_{Q_{\beta_1}^k}(f)\|_\infty \leq C \lambda_{Q_{\beta_1}^k}
  \mu(Q_{\beta_1}^k)^{-1/2}$; \vspace{1ex}

  \item $\displaystyle \left|a_{Q_{\beta_1}^k}(f)(x)-a_{Q_{\beta_1}^k}(f)(x')\right|
  \leq  C  \lambda_{Q_{\beta_1}^k} \min\left\{\Big(\f{d(x,x')}{ \delta^k} \Big)^\theta,1
    \right\}   \mu(Q_{\beta_1}^k)^{-1/2}$ for  all\, $x,x'\in X$; \vspace{1ex}

  \item For every $\ell \leq k$, if $3Q_{\beta_1}^k\cap
  3Q_{\beta_2}^\ell \neq \emptyset$, then
$$
\left|\int a_{Q_{\beta_1}^k}(f)(x)\overline{a_{Q_{\beta_2}^\ell}(f)(x)}d\mu(x)\right|
\leq C \lambda_{Q_{\beta_1}^k}\lambda_{Q_{\beta_2}^\ell}\delta^{k-\ell}
\left(\f{\mu(Q_{\beta_1}^k)}{\mu(Q_{\beta_2}^\ell)}\right)^{1/2}.
$$
\end{enumerate}
\end{lemma}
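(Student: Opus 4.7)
The plan is to prove all four properties by combining Cauchy--Schwarz in the time--space integration with the precise kernel bounds of Lemma~\ref{le2.2}. Throughout, set $g_t(y) := t^2Le^{-t\sqrt{L}}f(y)$, so that $a_{Q_\beta^k}(f)(x)=c_\psi\iint_{T(Q_\beta^k)}K_{\psi(t\sqrt{L})}(x,y)\,g_t(y)\,\frac{d\mu(y)\,dt}{t}$. Note that for $(y,t)\in T(Q_\beta^k)$ we have $y\in Q_\beta^k\subset B(x_\beta^k,2\delta^k)$ and $t<\rho^{-1}\delta^k$, and moreover the doubling condition combined with $t\asymp \delta^k$ gives $\mu(B(z,t))\asymp \mu(Q_\beta^k)$ for any $z\in 3Q_\beta^k$.

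For (i), if $a_{Q_{\beta_1}^k}(f)(x)\neq 0$ then by Lemma~\ref{le2.2}(i) there exists $y\in Q_{\beta_1}^k$ with $d(x,y)\leq \rho t<\delta^k$, yielding $x\in 3Q_{\beta_1}^k\subset B(x_{\beta_1}^k,3\delta^k)$, and the last inclusion into $E_2(Q_{\beta_1}^k)$ follows from Proposition~\ref{prop4.2}. For (ii) and (iii), Cauchy--Schwarz in the $\frac{d\mu(y)\,dt}{t}$ integration isolates $\lambda_{Q_{\beta_1}^k}$, reducing the question to estimating
\[
\iint_{T(Q_{\beta_1}^k)}\bigl|K_{\psi(t\sqrt{L})}(x,y)\bigr|^2\,\frac{d\mu(y)\,dt}{t}\quad\text{and}\quad \iint_{T(Q_{\beta_1}^k)}\bigl|K_{\psi(t\sqrt{L})}(x,y)-K_{\psi(t\sqrt{L})}(x',y)\bigr|^2\,\frac{d\mu(y)\,dt}{t}
\]
respectively. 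Using the bounds of Lemma~\ref{le2.2}(i) and (iii), combined with the support condition $d(x,y)\leq \rho t$ and the doubling property to realize $\mu(B(x,t))\asymp \mu(Q_{\beta_1}^k)$ on the relevant range, and noting that $\int_{\rho^{-1}\delta^{k+1}}^{\rho^{-1}\delta^k}dt/t=\log(1/\delta)$ is a bounded constant, one obtains the desired bounds (with the $\min$ in (iii) following since $t\asymp \delta^k$ uniformly on $T(Q_{\beta_1}^k)$, and the case $d(x,x')>\delta^k$ reduces to applying (ii) twice).

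The main obstacle is (iv), where genuine almost-orthogonality is needed. The approach is to use Fubini and the self-adjointness of $\psi(s\sqrt{L})$ (since $\psi$ is real-valued) to rewrite
\[
\int a_{Q_{\beta_1}^k}(f)\overline{a_{Q_{\beta_2}^\ell}(f)}\,d\mu = c_\psi^2\iint_{T(Q_{\beta_2}^\ell)}\iint_{T(Q_{\beta_1}^k)} K_{\psi(s\sqrt{L})\psi(t\sqrt{L})}(z,y)\,g_t(y)\,\overline{g_s(z)}\,\frac{d\mu(y)\,dt}{t}\,\frac{d\mu(z)\,ds}{s}.
\]
Since $\ell\leq k$ gives $s\asymp\delta^\ell\geq \delta^k\asymp t$, the composition estimate in Lemma~\ref{le2.2}(ii) yields
\[
\bigl|K_{\psi(s\sqrt{L})\psi(t\sqrt{L})}(z,y)\bigr|\leq \frac{C\,(t/s)}{\mu(B(z,s))}\leq \frac{C\,\delta^{k-\ell}}{\mu(Q_{\beta_2}^\ell)},
\]
the second inequality using $s\asymp\delta^\ell$ and doubling for $z\in Q_{\beta_2}^\ell$. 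The resulting bound factorizes into two pieces, each of which is controlled by Cauchy--Schwarz together with $|T(Q_\beta^k)|_{d\mu\,dt/t}\asymp \mu(Q_\beta^k)$: namely $\iint_{T(Q_{\beta_i}^{\cdot})}|g|\leq C\lambda_{Q_{\beta_i}^{\cdot}}\mu(Q_{\beta_i}^{\cdot})^{1/2}$. Combining these and simplifying produces precisely $C\delta^{k-\ell}\lambda_{Q_{\beta_1}^k}\lambda_{Q_{\beta_2}^\ell}(\mu(Q_{\beta_1}^k)/\mu(Q_{\beta_2}^\ell))^{1/2}$, as required. The geometric hypothesis $3Q_{\beta_1}^k\cap 3Q_{\beta_2}^\ell\neq\emptyset$ is not strictly needed to derive this bound, but it identifies the only case in which the left-hand side can be nonzero (by (i)), and it ensures the measure comparison $\mu(B(z,s))\asymp\mu(Q_{\beta_2}^\ell)$ remains valid throughout the region of integration.
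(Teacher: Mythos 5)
Your proof is correct and follows essentially the same route as the paper: (i)--(iii) by Cauchy--Schwarz in the $d\mu\,dt/t$ integration together with the kernel bounds of Lemma~\ref{le2.2}(i),(iii) and doubling, and (iv) by self-adjointness, Fubini, the composition bound of Lemma~\ref{le2.2}(ii), and Cauchy--Schwarz. The only (harmless) deviation is that in (iv) you bound the composed kernel by $C\,\delta^{k-\ell}/\mu(B(z,s))$ with $z$ in the larger cube, rather than the paper's $\mu(B(y,s))$ with $y$ in the smaller one, which is exactly why you can note that the hypothesis $3Q_{\beta_1}^k\cap 3Q_{\beta_2}^\ell\neq\emptyset$ is not strictly needed for the estimate itself.
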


\begin{proof}
The result of (i) and (ii) is a
 straightforward application of (i)   of Lemma~\ref{le2.2}.

  For every $x,x'\in X$,
\begin{eqnarray*}
 \left|a_{Q_{\beta_1}^k}(f)(x)-a_{Q_{\beta_1}^k}(f)(x')\right|
&\leq&  C \iint_{T(Q_{\beta_1}^k)}
\left|K_{\psi(t\sqrt{L})} (x,y)-K_{\psi(t\sqrt{L})} (x',y)\right|
 \big|t^2Le^{-t \sqrt{L}}f(y) \big|\f{d\mu(y)dt}{t}.
\end{eqnarray*}
From this, we  apply   (iii) of Lemma~\ref{le2.2}  and the Cauchy-Schwarz inequality to obtain  (iii).

Let us prove (iv).  For every $\ell\leq k$, we have that $\delta^k\leq \delta^{\ell}$.
Note that $\psi(t\sqrt L)$ is self-adjoint and so $K_{\psi(t\sqrt L)}(x,y)= {K_{\psi(t\sqrt L)}(y,x)}$. Thus
\begin{eqnarray*}
& &\hspace{-1.2cm}\left|\int  a_{Q_{\beta_1} ^k}(f)(x) \overline{a_{Q_{\beta_2}^\ell}(f)(x)} d\mu(x) \right|\\
&\leq&C  \iint_{T(Q_{\beta_1}^k)}  \left|t^2Le^{-t\sqrt{L}}
f(y)\right | \iint_{T(Q_{\beta_2}^\ell)} \left |K_{\psi(s\sqrt{L})
\psi(t\sqrt{L})}(y,z) \right |    \big |s^2L
e^{-s \sqrt{L}}f(z) \big| \f{d\mu(z)ds}{s}\f{d\mu(y)dt}{t}.
\end{eqnarray*}
By (ii) of Lemma~\ref{le2.2},
$
 |K_{\psi(s\sqrt{L})
\psi(t\sqrt{L})}(y,z)  | \leq C t/(s\mu(B(y,s))).
$
By the Cauchy-Schwarz inequality,
\begin{eqnarray*}
 \left|\int  a_{Q_{\beta_1}^k}(f)(x) \overline{a_{Q_{\beta_2}^\ell}(f)(x)} d\mu(x) \right|
&\leq&C   \lambda_{Q_{\beta_1}^k}\lambda_{Q_{\beta_2}^\ell}\delta^{k-\ell}
\Bigg(\f{\mu(Q_{\beta_1}^k)}{\mu(Q_{\beta_2}^\ell)}\Bigg)^{1/2},
\end{eqnarray*}
which implies  (iv).
The proof of Lemma~\ref{le3.1} is end.
\end{proof}

\smallskip

Let us now introduce the discrete square function associated to an operator $L.$
For a given $f\in L^2(X)$,   by \eqref{e3.1} we have that  $f=
\underset{Q\in \mathscr{D}}{\sum} a_Q(f)$ where $a_Q(f)$ is given in \eqref{e3.3}.
It follows from \eqref{e3.1} and Proposition~\ref{prop4.2} that
\begin{equation}
\label{e4.8}
f(x)=\sum_{b=1}^K   f_b(x), \  \    \text{where} \  \ f_{b}(x)=\sum_{Q\in \mathscr{D},
\  E_2(Q)\in \mathcal{D}^{b}}  a_Q(f)(x).
\end{equation}
Define  the {\it discrete square function associated to $L$}:
\begin{eqnarray}\label{e4.7}
{\mathscr A}_L (f )(x):=\left(\sum_{  Q\in \mathscr{D} }
\f{|\lambda_{Q }|^2}{\mu(Q )}\chi_{E_2(Q)}(x)\right)^{1/2},
\end{eqnarray}
where $\lambda_Q$ is given in \eqref{e3.2}.

\begin{lemma}\label{le4.3}
 Fix
$\delta\in (0, 1/144)$   as in Proposition~\ref{prop2.1} and let $\alpha>0$.
Let    $S_{L,\alpha}f$ and ${\mathscr A}_L (f )$ be given in \eqref{e1.7}  and \eqref{e4.7}, respectively.
Then there exists  $\rho=\rho(\alpha,\delta)$   in Lemma~\ref{le2.2}
 such that
\begin{equation}
\label{e4.9}
c^{-1} S_{L,\delta  \alpha /{75}}f(x)\leq {\mathscr A}_L (f )(x)\leq cS_{L, \alpha}f(x)
\ \ \  {\rm a.e.}\  x\in X
\end{equation}
for some  positive constant    $c=c(n,\alpha)$.

As a consequence,   we have
$$
\|f\|_p \simeq \|S_{L, \alpha}f\|_p\simeq  \|{\mathscr A}_Lf\|_p, \ \ \ \ 1<p<\infty.
 $$
\end{lemma}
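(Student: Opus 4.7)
The plan is to pick the parameter $\rho>0$ appearing in Lemma~\ref{le2.2} as a function of $\alpha$ and $\delta$ so that the tents $T(Q_\beta^k)$ encode, up to absolute constants, the Lusin cone at $x$ of aperture $\alpha$, and then to extract the $L^p$ equivalence from the aperture-independence of Lusin functions combined with the known square-function estimate $\|f\|_p\simeq\|S_{L,\alpha}f\|_p$ cited after \eqref{e1.7}.

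First, I would fix $\rho=\rho(\alpha,\delta)$ sufficiently small so that the following two purely geometric statements hold for every $Q_\beta^k\in\mathscr{D}$:
\begin{itemize}
\item[(a)] $x\in E_2(Q_\beta^k)$ and $(y,t)\in T(Q_\beta^k)$ $\Longrightarrow$ $d(x,y)<\alpha t$;
\item[(b)] $d(y,x)<(\delta\alpha/75)\,t$ and $\rho^{-1}\delta^{k+1}\le t<\rho^{-1}\delta^{k}$ $\Longrightarrow$ the unique $Q_\beta^k\in\mathscr{D}_k$ with $y\in Q_\beta^k$ also satisfies $x\in 3Q_\beta^k\subset E_2(Q_\beta^k)$.
\end{itemize}
Both are elementary consequences of the inclusions $Q_\beta^k\subset B(x_\beta^k,2\delta^k)$, $E_2(Q_\beta^k)\subset B(x_\beta^k,4\delta^{k-1})$, and $3Q_\beta^k\subset E_2(Q_\beta^k)$ provided by Proposition~\ref{prop4.2}; the only delicate point is absorbing the scale-mismatch factors of $\delta^{-1}$ coming from $E_2$ sitting one generation coarser than $Q$, and then balancing the two inequalities with a single $\rho$.

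With $\rho$ so fixed, the \emph{upper bound} $\mathscr{A}_L(f)(x)\le c\,S_{L,\alpha}f(x)$ follows by expanding
\[
\mathscr{A}_L(f)(x)^2=\sum_{k\in\mathbb{Z}}\sum_{\beta:\,x\in E_2(Q_\beta^k)}\frac{1}{\mu(Q_\beta^k)}\iint_{T(Q_\beta^k)}\big|t^2Le^{-t\sqrt{L}}f(y)\big|^2\frac{d\mu(y)\,dt}{t}.
\]
Property (a) embeds each contributing tent in the cone $\{(y,t):d(y,x)<\alpha t\}$; the tents are pairwise disjoint in $X\times(0,\infty)$; and at each scale $k$ the sets $\{E_2(Q_\beta^k)\}_\beta$ have overlap bounded by a constant depending only on $n$ (this follows from the injectivity of $E_2$ into $\bigcup_{b=1}^K\mathcal{D}^b$, the bound $\mu(E_2(Q))\le C\mu(Q)$, and volume doubling). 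A doubling comparison yields $\mu(Q_\beta^k)\simeq\mu(B(x,t))$ uniformly in $(y,t)\in T(Q_\beta^k)$ and $x\in E_2(Q_\beta^k)$, and the upper bound drops out.

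For the \emph{lower bound}, integrate over $(y,t)$ in the narrower cone $\{d(y,x)<(\delta\alpha/75)t\}$. By (b), each such pair lies in the unique tent $T(Q_\beta^k)$ determined by $t$ and $y$, and $x\in E_2(Q_\beta^k)$, so $(y,t)$ contributes to exactly one summand of $\mathscr{A}_L(f)(x)^2$; the disjointness of $\mathscr{D}_k$ at each scale makes the reverse direction a clean Fubini, and the same doubling comparison $\mu(B(x,t))\simeq\mu(Q_\beta^k)$ closes the estimate $S_{L,\delta\alpha/75}f(x)\le c\,\mathscr{A}_L(f)(x)$. Finally, the $L^p$ equivalences $\|f\|_p\simeq\|S_{L,\alpha}f\|_p\simeq\|\mathscr{A}_L f\|_p$ for $1<p<\infty$ follow by feeding the pointwise comparison into the $L^p$-boundedness of $S_{L,\alpha'}$ for any aperture $\alpha'>0$ and the known equivalence $\|f\|_p\simeq\|S_{L,\alpha}f\|_p$ recalled after \eqref{e1.7}. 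The main obstacle is finding a single $\rho=\rho(\alpha,\delta)$ that makes (a) and (b) compatible in view of the one-generation offset built into $E_2$; once that is arranged, the rest is standard doubling and bounded-overlap bookkeeping.
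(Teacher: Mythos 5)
Your plan is essentially the paper's proof: fix $\rho$ comparable to $\delta^{2}\alpha$ (the paper takes $\rho=\delta^{2}\alpha/5$), show that each tent $T(Q_\beta^k)$ with $x\in E_2(Q_\beta^k)$ lies in the aperture-$\alpha$ cone at $x$, show that the aperture-$\delta\alpha/75$ cone intersected with the slab $\rho^{-1}\delta^{k+1}\le t<\rho^{-1}\delta^{k}$ is covered by $\Omega_k(x)=\bigcup\{Q\in\mathscr{D}_k:\ x\in E_2(Q)\}$, and finish with the doubling comparison $\mu(Q)\simeq\mu(B(x,t))$ and the known $L^p$ bounds for $S_{L,\alpha'}$. (Incidentally, the bounded-overlap count you invoke for the upper bound is not needed: at a fixed generation the cubes of $\mathscr{D}_k$ are pairwise disjoint and the time slabs for different $k$ are disjoint, so both directions are clean Fubini identities.)

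There is, however, one concrete flaw: your condition (b), with the conclusion ``$x\in 3Q_\beta^k\subset E_2(Q_\beta^k)$'', cannot be balanced against (a) by any single $\rho$. Condition (a) forces $\rho\lesssim\alpha\delta^{2}$, since $x\in E_2(Q_\beta^k)\subset B(x_\beta^k,4\delta^{k-1})$ and $y\in Q_\beta^k\subset B(x_\beta^k,2\delta^{k})$ allow $d(x,y)$ of size about $5\delta^{k-1}$ while $t$ may be as small as $\rho^{-1}\delta^{k+1}$. But with $\rho\lesssim\alpha\delta^{2}$, the hypothesis of (b) near the top of the slab, $t\approx\rho^{-1}\delta^{k}$, permits $d(x,y)$ of size about $(\delta\alpha/75)\rho^{-1}\delta^{k}\sim\delta^{k-1}$, which is far larger than $\delta^{k}$; hence $x\in 3Q_\beta^k=\{z:\ d(z,Q_\beta^k)\le\delta^{k}\}$ genuinely fails (forcing it would require $\rho\gtrsim\delta\alpha$, incompatible with (a) because $\delta<1/144$). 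The point you need — and all you need — is the weaker conclusion $x\in E_2(Q_\beta^k)$, and it holds for a different reason: by (the proof of) Proposition~\ref{prop4.2}, $E_2(Q_\beta^k)$ is a cube of the coarser generation $k-1$ centered at $x_\beta^k$ and contains the ball $B(x_\beta^k,\delta^{k-1}/12)$, which is one generation larger than $3Q_\beta^k$ and therefore absorbs the $\delta^{-1}$ mismatch. Concretely, with $\rho=\delta^{2}\alpha/5$ one gets, for $d(x,y)<(\delta\alpha/75)t$ and $t<\rho^{-1}\delta^{k}$, the bound $d(x,x_\beta^k)\le 2\delta^{k}+\delta^{k-1}/15<\delta^{k-1}/12$ (using $\delta<1/144$), hence $x\in E_2(Q_\beta^k)$; this is exactly the paper's argument. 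Replace your (b) by this inner-ball step and the rest of your proof goes through as written.
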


\begin{proof}
Let  $\rho=\delta^2 \alpha/5$.
It follows from \eqref{e4.7}, \eqref{eqn:dycube1} and the doubling property \eqref{e2.1}   that
\[\mathscr{A}_L(f)(x)\  {\simeq } \ \left(\sum_{k\in \mathbb{Z}}{\int_{\rho^{-1}\delta^{k+1} }^{\rho^{-1}\delta^{k} }}
 \int_{\Omega_k(x)} \left|t^2 L e^{-t \sqrt{L}} f(y)\right|^2\frac{d\mu(y)}{\mu(B(x,t))}\frac{dt}{t}\right)^{1/2},\]
where $\displaystyle \Omega_k(x)=\bigcup_{Q\in \mathscr{D}_k ,\  x\in E_2(Q)}Q$.


For any $y\in \Omega_k(x)$, there exists a unique $Q\in \mathscr{D}_k$ with $x\in E_2(Q)$
such that $y\in Q$. Denote the center point of $Q$ by $x_Q$, one can apply
Proposition~\ref{prop4.2} and \eqref{eqn:dycube2} to see
$ E_2(Q)\subset B(x_Q,4\delta^{k-1})$, and so
  $d(x,x_Q)\leq 4\delta^{k-1}$.  Note
that $Q\subset B(x_Q,2\delta^k)$. By \eqref{eqn:dycube1}, we have
\[d(x,y)\leq d(x,x_Q)+d(y,x_Q)\leq 4\delta^{k-1}+2\delta^{k}<5\delta^{k-1}.\]
 {Recall that $\rho=\delta^2 \alpha/5$ so $[\rho^{-1}\delta^{k+1},\rho^{-1}\delta^k)= [5\delta^{k-1}/\alpha,5\delta^{k-2}/\alpha)$. Therefore,
for every $(y,t)\in \Omega_k(x)\times [\rho^{-1}\delta^{k+1},\rho^{-1}\delta^k)$, we have $d(x,y)<5\delta^{k-1}\leq \alpha t$ .
This, in combination with the definition of $S_{L,\alpha}(f)$, deduces that
${\mathscr A}_L (f )(x)\leq cS_{L, \alpha}(f)(x)$.}

On the other hand, recall that $\delta<1/{144}$ and $\rho=\delta^2 \alpha/5$.
 For each $z\in B(x,\delta  \alpha t/{75})$, there exists
a unique $P\in \mathscr{D}_k$ with center point $x_P$  such that $z\in P$. Then for $t\in [{5\delta^{k-1}/\alpha,5\delta^{k-2}/\alpha})$,
 $d(x,x_P)\leq d(z,x_P)+d(x,z)\leq 2\delta^k+ \delta\alpha t/{75}<\delta^{k-1}/{12}$, which implies  $x\in E_2(P)$,
   $z\in \Omega_k(x)$ and so $B\left(x,{\delta  \alpha t/{75}}\right)\subset \Omega_k(x)$
 , hence $S_{L,\delta \alpha/{75}}(f)(x)\leq c{\mathscr A}_L (f )(x).$
  The proof of Lemma~\ref{le4.3} is end.
\end{proof}

\medskip

\section{Exponential-square integrability}
\setcounter{equation}{0}
In this section  we aim to establish   the local exponential-square integrability of functions whose square functions
associated to an operator $L$ are bounded, and
extend a result of  Chang-Wilson-Wolff \cite[Theorem 3.2]{CWW}.
Based on  Lemma~\ref{le4.3},
the proof is using  reduction to the discrete  square function   associated to an operator $L$
on  space $X$ of homogeneous type.

Throughout this section  we fix a   $Q\in \mathcal{D}^b$ for some $b\in
\{1,2,\cdots,K\}$. Let $\mathcal{G}\subset \bigcup_{|k|\leq
K_{\mathcal{G}}}\mathcal{D}_k^{b}$  be a subset such that
$\bigcup_{P\in \mathscr{D}, E_2(P) \in \mathcal{G} } E_2(P) \subset Q$   and some $K_{\mathcal{G}}<\infty$.
  For every $f\in L^2(X)$,  it follows from \eqref{e3.1} that  $f =\sum_{P\in \mathscr{D}}  a_P(f)$.
For such $\mathcal{G}$, we define
\begin{eqnarray}\label{e4.10}
f_{\mathcal{G}}=\sum_{P\in \mathscr{D}, \  E_2(P)\in \mathcal{G}}   a_P(f)
\end{eqnarray}
and
\begin{eqnarray}\label{e4.66}
{\mathscr A}_L (f_\mathcal{G})(x)=\left(\sum_{P\in \mathscr{D}, \   E_2(P)\in \mathcal{G}}
\f{|\lambda_{P }|^2}{\mu(P )}\chi_{E_2(P)}(x)\right)^{1/2}
\end{eqnarray}
where  $\lambda_P$ is given in \eqref{e3.2}, and $E_2(P)$ is   as in Proposition~\ref{prop4.2}.
Obviously, ${\mathscr A}_L (f_\mathcal{G})\in L^{\infty}(X)$. Moreover, it can be verified  that
  there exists a  constant $C_1>0$, independent of $f_\mathcal{G}$ and $Q$ such that
\begin{equation}
\label{e4.13}
\displaystyle \left|
f_{\mathcal{G}}\right|_{Q}^2\leq \f{\|f_\mathcal{G}\|_2^2} {\mu({Q})}
\leq C_1^2  \|\mathscr A_L(f_\mathcal{G}) \|_{L^\infty({Q})}^2.
\end{equation}

Indeed, we assume that $Q\in \mathcal{D}_{k_0}^b$ for some $k_0\in \mathbb{Z}$.
 Since $E_2(P)\subseteq {Q}$, there exists
some $j\geq k_0$ such that
$E_2(P)\in \mathcal{D}_{j}^b$. Note that $\support a_P(f)\subset E_2(P)$.
Thus we have
\begin{eqnarray*}
\f{\|f_\mathcal{G}\|_2^2} {\mu({Q})}
&\leq &  \f{1}{\mu({Q})} \sum_{P\in \mathscr{D}, \  E_2(P)\in \mathcal{G}}
\int_{Q} \left|a_P(f)(x)\right|^2 d\mu(x) \\
& &+ \f{2}{\mu({Q})}     \sum_{j\geq k_0}
\sum_{\substack{P_1\in \mathscr{D}\\  E_2(P_1)\in \mathcal{G}\cap
\mathcal{D}_j^b}  }  \sum_{\ell>0}
\sum_{\substack{P_2\in \mathscr{D}, \  E_2(P_2)\in \mathcal{G}\\
E_2(P_2)\in Ch^{(\ell)} (E_2(P_1))}} \left| \int_{Q}  a_{P_1}(f)(x) \overline{a_{P_2}(f)(x)} d\mu(x)\right|.
\end{eqnarray*}

For $E_2(P_1)\in \mathcal{G}\cap \mathcal{D}_j^b $, and  $E_2(P_2)\in
Ch^{(\ell)}(E_2(P_1))$ mentioned above, we have $E_2(P_2)\in
\mathcal{D}_{j+\ell}^b$, and the definition of $E_2$ implies that
$P_1\in \mathscr{D}_{j+1}$ and $P_2\in \mathscr{D}_{j+\ell+1}$.
Hence it follows from (iv) of Lemma~\ref{le3.1} and Proposition~\ref{prop4.2}
that {there exists a constant $C=C(n,\rho)$ such that }
\begin{eqnarray*}
\left|\int_{Q}  a_{P_1}(f)(x) \overline{a_{P_2}(f)(x)} d\mu(x) \right|
\leq  C  \sqrt{\mu(E_2(P_2))\over \mu(E_2(P_1))}\delta^\ell
\left|\lambda_{P_1}\right| \left|\lambda_{P_2}\right|
\leq C \delta^\ell\Bigg( |\lambda_{P_2}|^2+\f{\mu(E_2(P_2))}{\mu(E_2(P_1))} |\lambda_{P_1} |^2
 \Bigg).
\end{eqnarray*}
This, together with   (ii) of Lemma~\ref{le3.1}, yields
\begin{eqnarray}
\label{e4.14}
\left|f_{\mathcal{G}}\right|_{Q}^2\leq \f{\|f_\mathcal{G}\|_2^2} {\mu({Q})}
&\leq & \f{C}{\mu({Q})}
\sum_{P\in
\mathscr{D},   \  E_2(P) \in \mathcal{G}}  |\lambda_P|^2
 +\f{C}{\mu(Q)}\sum_{P_2\in \mathscr{D}, E_2(P_2)\in \mathcal{G}}
 \sum_{\substack{\ell>0,  P_1\in \mathscr{D}, E_2(P_1)\in
 \mathcal{G}\notag\\ E_2(P_2)\in Ch^{(\ell)}(E_2(P_1))}}
 \delta^\ell \left|\lambda_{P_2}\right|^2 \notag \\
& +&   \f{C}{\mu({Q})}
\sum_{P_1\in \mathscr{D}, \  E_2(P_1)\in \mathcal{G}  }  \sum_{\ell>0}
\delta^\ell  |\lambda_{P_1}|^2
  \sum_{\substack{P_2\in \mathscr{D}, \  E_2(P_2)\in
 \mathcal{G}\notag\\
 E_2(P_2)\in Ch^{(\ell)} (E_2(P_1))}}   \f{\mu(E_2(P_2))}{\mu(E_2(P_1))}
  \notag\\
 &\leq & \f{C}{\mu({Q})}
\sum_{P\in \mathscr{D}, \  E_2(P)\in \mathcal{G}  }
\left|\lambda_{P}\right|^2\notag\\
&\leq& \f{C}{\mu({Q})}  \int_{Q}  \sum_{P\in \mathscr{D}, \
E_2(P)\in \mathcal{G}  }    \f{\left|\lambda_{P}\right|^2}
{\mu(P)}\chi_{E_2(P)}(x)d\mu(x) \notag\\
&\leq&   C_1^2 \left\|\mathscr A_L(f_\mathcal{G})\right\|_{L^\infty({Q})}^2
\end{eqnarray}
for some $C_1>0.$ This shows \eqref{e4.13} and this
  estimate
will be used often in the sequel.

 \subsection{John-Nirenberg type inequality for the discrete square function
 }

In this subsection, we will  give some preliminary result about
a  John-Nirenberg type inequality for the   function $f_{\mathcal{G}}$.
\begin{proposition}
\label{le4.4}
 Suppose that
$L$ is a densely-defined operator on $L^2(X)$ satisfying
\textbf{(H1)}, \textbf{(H2)} and \textbf{(H3)}.  Let $f\in L^2(X,d\mu)$. For a fixed  $Q\in \mathcal{D}^b$ for some $b\in
\{1,2,\cdots,K\}$, let $f_{\mathcal{G}}$  be given in \eqref{e4.10}.
  Then
	there exists two constants {$ c_1=c_1(n,\rho)$, $C_2=C_2(n,\rho)$}
 independent of $\mathcal{G}$, $K_\mathcal{G}$, $Q$ and $f$, with
$0<{c_1}<1<C_2<\infty$, such that for all $\lambda>0$,
	\begin{equation}	\label{e4.11}
	\mu\left(\left\{x\in {Q}:\     |f_\mathcal{G}(x)|>\lambda
\right\}\right)
  \leq   C_2 \exp\left(- \f{{c_1}\lambda}
  {\|{\mathscr A}_L(f_\mathcal{G})
  \|_{L^\infty({Q})}}\right) \mu({Q}).
	\end{equation}
	
	As a consequence, we have	
\begin{eqnarray}\label{e4.12}
 \int_{Q} \exp \left(\f{{c_1}|f_\mathcal{G}(x)|}
{2\|\mathscr A_L(f_\mathcal{G})\|_{L^\infty({Q})}}\right)
d\mu(x) \leq C_2\, \mu(Q).
\end{eqnarray}
\end{proposition}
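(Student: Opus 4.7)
The plan is to prove \eqref{e4.11} by a Calder\'on--Zygmund stopping-time argument inside the dyadic system $\mathcal{D}^b$, using the $L^2$ estimate \eqref{e4.13} as the base case of an induction on the threshold $\lambda$, and then to derive \eqref{e4.12} from \eqref{e4.11} via the layer-cake formula. After normalizing so that $\|\mathscr{A}_L(f_{\mathcal{G}})\|_{L^\infty(Q)} = 1$, fix a large $\lambda_0 > C_1$ (to be chosen) and select the maximal subcubes $\{Q_j\} \subset \mathcal{D}^b$ of $Q$ for which $|f_{\mathcal{G}}|_{Q_j} > \lambda_0$. Chebyshev combined with \eqref{e4.13} gives $\sum_j \mu(Q_j) \leq C_1^2 \lambda_0^{-2}\mu(Q)$; the Lebesgue differentiation theorem yields $|f_{\mathcal{G}}| \leq \lambda_0$ a.e.\ on $Q \setminus \bigcup_j Q_j$; and maximality plus the doubling property gives $|f_{\mathcal{G}}|_{Q_j} \leq C\lambda_0$ for every $j$.

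On each $Q_j$ I decompose $f_{\mathcal{G}} = g_j + h_j$, where $g_j$ collects the $a_P(f)$ with $E_2(P) \supsetneq Q_j$ and $h_j$ those with $E_2(P) \subseteq Q_j$. Since cubes of $\mathcal{D}^b$ are either nested or disjoint, and $\support a_P \subset E_2(P)$ by Lemma~\ref{le3.1}(i), these two families exhaust the contributions on $Q_j$. The function $h_j$ has the same form as $f_{\mathcal{G}}$ with $Q_j$ replacing $Q$ and a subfamily $\mathcal{G}_j \subset \mathcal{G}$; by the definition \eqref{e4.66}, $\|\mathscr{A}_L(h_j)\|_{L^\infty(Q_j)} \leq 1$. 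This self-similarity is what enables the induction on $\lambda$.

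The principal obstacle is the uniform bound $\|g_j\|_{L^\infty(Q_j)} \leq C'\lambda_0$, which I plan to establish in two steps. For smoothness, each $P \in \mathscr{D}_k$ contributing to $g_j$ satisfies $k \leq k_j + 1$ (where $Q_j \in \mathcal{D}^b_{k_j}$), so Lemma~\ref{le3.1}(iii) yields $|a_P(x) - a_P(x')| \leq C\lambda_P\mu(P)^{-1/2}\delta^{\theta(k_j+1-k)}$ for $x,x' \in Q_j$. At each level $k$ there is at most one such $P$, because the cubes $E_2(P) \in \mathcal{D}^b_{k-1}$ containing $Q_j$ are linearly ordered by inclusion; Cauchy--Schwarz against the convergent geometric series $\sum_{m\geq 0}\delta^{2m\theta}$ and the pointwise identity $\sum_{P:\, y \in E_2(P)}|\lambda_P|^2/\mu(P) \leq \mathscr{A}_L(f_{\mathcal{G}})(y)^2 \leq 1$ for $y \in Q_j$ then produces $|g_j(x) - g_j(x')| \leq C$ on $Q_j$. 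For the average, the triangle inequality gives $|g_j|_{Q_j} \leq |f_{\mathcal{G}}|_{Q_j} + |h_j|_{Q_j} \leq C\lambda_0 + C_1$, where the second term uses \eqref{e4.13} applied to $h_j$ over $Q_j$. Combining the two steps yields the desired sup bound.

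With $\|g_j\|_{L^\infty(Q_j)} \leq C'\lambda_0$ in hand, on $Q_j$ one has $|f_{\mathcal{G}}| \leq C'\lambda_0 + |h_j|$. Defining
\[
\Phi(\lambda) := \sup \frac{\mu\{x \in Q:\ |f_{\mathcal{G}}(x)| > \lambda\}}{\mu(Q)},
\]
where the supremum is over all admissible pairs $(Q,\mathcal{G})$ with $\|\mathscr{A}_L(f_{\mathcal{G}})\|_{L^\infty(Q)} \leq 1$, summing over $j$ yields the self-similar recursion
\[
\Phi(\lambda) \leq \frac{C_1^2}{\lambda_0^2}\,\Phi(\lambda - C'\lambda_0), \qquad \lambda > C'\lambda_0.
\]
Choosing $\lambda_0$ so that $q := C_1^2/\lambda_0^2 < 1$ and iterating produces the geometric decay $\Phi(\lambda) \leq C_2 e^{-c_1 \lambda}$ with $c_1 = -\log q/(C'\lambda_0)$, which is \eqref{e4.11}; \eqref{e4.12} then follows at once by the layer-cake formula, choosing the exponent $c_1/2$ to absorb the tail.
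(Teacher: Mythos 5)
Your proof is correct and follows essentially the same route as the paper's: a stopping-time selection of maximal dyadic cubes on which the average of $f_{\mathcal{G}}$ exceeds a threshold, a splitting on each stopping cube into the contributions of cubes strictly containing it (whose oscillation you control by the H\"older estimate of Lemma~\ref{le3.1}(iii) together with Cauchy--Schwarz against a geometric series, exactly as in \eqref{e4.15}) and of cubes contained in it (controlled by the local $L^2$ bound \eqref{e4.13}), followed by iteration in $\lambda$. The only organizational difference is that you iterate a self-similar recursion for the supremal distribution function $\Phi$ over all admissible pairs $(Q,\mathcal{G})$, with the per-step gain $C_1^2/\lambda_0^2$ coming from an $L^2$--Chebyshev estimate, whereas the paper iterates the level-set inequality between $\Omega_\lambda$ and $\Omega_{\lambda+\lambda_0}$ for the fixed $f_{\mathcal{G}}$ via an $L^1$ estimate inside each stopping cube; both give \eqref{e4.11}, and hence \eqref{e4.12}, with constants depending only on $n$ and $\rho$.
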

\begin{proof} One  writes $E_\lambda=\{x\in {Q}:   \
|f_\mathcal{G}(x)|>\lambda\}$  and $\lambda_0={\mathscr N}\left\|\mathscr A_L
(f_\mathcal{G})\right\|_{L^\infty({Q})}$ for some constant ${\mathscr N}>C_1$, which will be chosen later.
Let us  consider   two cases.

\smallskip

\noindent
{\it Case (1):    $\lambda\geq  \lambda_0$}.

In this case, noting that by \eqref{e4.13}  that  $\left|f_{\mathcal{G}} \right|_{Q}\leq \lambda$, we can construct a subset $\{Q_{\lambda,i}:i\in \mathbb{N}\}$ of dyadic cubes $Q_{\lambda,i}\in \mathcal{D}^{b}$
by selecting all the maximum dyadic cubes  $Q_{\lambda,i}\subset Q$ in $\mathcal{D}^{b}$ such that:
 1)
  $Q_{\lambda,i}$ satisfies
$\left|f_{\mathcal{G}} \right|_{Q_{\lambda,i}}> \lambda$;
 2)
 $\widetilde{Q}_{\lambda,i}$ satisfies
$\left|f_{\mathcal{G}} \right|_{\widetilde{Q}_{\lambda,i}}\leq \lambda$, where $\widetilde{Q}_{\lambda,i}$
is the parent of $Q_{\lambda,i}$.
Moreover,  we  have the following properties:
\begin{enumerate}[(1)]
  \item[ 3)] Up to a subset of measure zero,
   \[E_\lambda \subset \bigcup_i
Q_{\lambda,i} =:\Omega_\lambda;\]
  \item[ 4)] If $\lambda_1\geq \lambda_2$, then $\Omega_{\lambda_1}\subset \Omega_{\lambda_2}$.
\end{enumerate}

Indeed,  we note that for every
$Q_{\lambda_1,i}\subset \Omega_{\lambda_1}$,
$\left|f_{\mathcal{G}} \right|_{Q_{\lambda_1,i}}> \lambda_1\geq\lambda_2$.
There is  some $Q_{\lambda_2,j}$ such that
$Q_{\lambda_1,i}\subset Q_{\lambda_2,j}\subset \Omega_{\lambda_2}$, and so   4) follows.
 For  3), denote the maximal operator
\[M^{\mathcal{D}^b}(f)(x):=\sup_{x\in P\in \mathcal{D}^b} \f{1}{\mu(P)}
\int_{P} |f(y)|d\mu(y).\]
It follows from \cite[Theorem~15.1]{LN15} that
 $M^{\mathcal{D}^b} $ is of weak
type $(1,1)$ uniformly in $b\in \{1,2,\cdots,K\}$. Therefore, a standard argument deduces the corresponding
Lebesgue differentiation theorem
\[\lim_{\substack{k\to +\infty\\ x\in P\in \mathcal{D}_k^b}} \f{1}{\mu(P)}
\int_P f(y)d\mu(y) =f(x) \  \  \   a.e.\  x\in X. \]
This shows
$
E_\lambda\subset \left\{x\in Q:\  \  M^{\mathcal{D}^b}\left(f_\mathcal{G}\right)(x)>\lambda\right\}\subset
\bigcup_i Q_{\lambda,i}
$
up to a subset of measure zero. This finishes the proof of  3) above.

\smallskip

Now, let us fix $i$. To  estimate $\big|(f_\mathcal{G})_{Q_{\lambda,i}}\big|$,
for $x\in Q_{\lambda,i}$ we  rewrite
$f_\mathcal{G}$ by
\[f_{\mathcal{G}}(x)=\sum_{\substack{P\in \mathscr{D},\, E_2(P)\in
\mathcal{G}\\ \widetilde{Q}_{\lambda,i}\subset E_2(P) }} a_P(f)(x)
+ \sum_{\substack{P\in \mathscr{D},\,
E_2(P)\in \mathcal{G}\\  E_2(P)\subset Q_{\lambda,i} }} a_P(f)(x) =:
f_{\mathcal{G},1}(x)+ f_{\mathcal{G},2}(x).
\]
Since $\mathcal{G}\subset \bigcup_{|k|\leq K_{\mathcal{G}}}\mathcal{D}_k^{b}$
with $K_{\mathcal{G}}<\infty$,     (iii) of Lemma~\ref{le3.1} implies
that $f_{\mathcal{G},1}$ is a finite sum of continuous functions and so is
continuous on $Q$ .
Therefore, for every $Q_{\lambda,i}$
there exists a point $x_i\in Q_{\lambda,i}$ such that
\[(f_{\mathcal{G},1})_{Q_{\lambda,i}}= f_{\mathcal{G},1} (x_i).\]
Similarly, there exists a point $\tilde{x}_i\in \widetilde{Q}_{\lambda,i}$
such that
$(f_{\mathcal{G},1})_{\widetilde{Q}_{\lambda,i}}= f_{\mathcal{G},1}
(\tilde{x}_i)$.
Denote that $\widetilde{Q}_{\lambda,i}\in \mathcal{D}_{\ell_0}^b$ and  $E_2(P)\in \mathcal{D}_{\ell_P}^b$.
Then one can apply (iii) of Lemma~\ref{le3.1}, Proposition~\ref{prop4.2}
and H\"older's inequality to obtain
\begin{eqnarray}
\left|f_{\mathcal{G},1} (x_i)- f_{\mathcal{G},1} (\tilde{x}_i)
\right| &\leq &
\sum_{\substack{P\in \mathscr{D},\, E_2(P)\in
\mathcal{G} \\ \widetilde{Q}_{\lambda,i}\subset E_2(P) }}
\left|a_P(f)(x_i)-a_P(f)(\tilde{x}_i) \right| \notag\\
&\leq& C \sum_{\substack{P\in \mathscr{D},\, E_2(P)\in
\mathcal{G} \\ \widetilde{Q}_{\lambda,i}\subset E_2(P) }}
\left(\f{d(x_i,\tilde{x}_i)}{\delta^{\ell_P+1}}\right)^{\theta}
 \f{|\lambda_P|}{
\mu(P)^{1/2} }\notag\\
&\leq &   C
\sum_{\substack{P\in \mathscr{D},\, E_2(P)\in
\mathcal{G} \\ \widetilde{Q}_{\lambda,i}\subset E_2(P) }} \delta^{-\theta}\left(\frac{\delta^{\ell_0}}
{\delta^{\ell_P}}\right)^{\theta} \frac{|\lambda_P|}{\mu(P)^{1/2}}
\notag\\
&\leq& C \delta^{-\theta}   \left(\sum_{\substack{P\in \mathscr{D}, \, E_2(P)\in
\mathcal{G} \\ \widetilde{Q}_{\lambda,i}\subset E_2(P) }}  \Bigg(\delta^{\ell_0-\ell_P}\Bigg)^{2\theta} \right)^{1/2}
\left(\sum_{\substack{P\in \mathscr{D},\, E_2(P)\in
\mathcal{G} \\ \widetilde{Q}_{\lambda,i}\subset E_2(P) }}
 \f{|\lambda_P|^2}{
\mu(P) }\right)^{1/2}\notag\\
&\leq &
 \f{C}{\delta^\theta \sqrt{1-\delta^{2\theta}}}  \left(
\sum_{\substack{P\in \mathscr{D},\, E_2(P)\in
\mathcal{G} \\  \widetilde{Q}_{\lambda,i}\subset E_2(P) }}  \f{|\lambda_P|^2}{\mu(P)}
\right)^{1/2},  \notag
\end{eqnarray}
{where constants $C$ above are dependent on $n$ and $\rho$.}
 Note that $\widetilde{Q}_{\lambda,i}\subset E_2(P) $ implies
$\chi_{E_2(P)}(x_i)\equiv 1$. Then we have
\begin{eqnarray}
\label{e4.15}
\left|f_{\mathcal{G},1} (x_i)- f_{\mathcal{G},1} (\tilde{x}_i)
\right| &\leq & \f{C}{\delta^\theta \sqrt{1-\delta^{2\theta}}}  \Bigg(
 \sum_{ P\in \mathscr{D},E_2(P)\in \mathcal{G}  }  \f{|\lambda_P|^2}{
 \mu(P)}\chi_{E_2(P)}(x_i)
\Bigg)^{1/2}  \notag\\
&\leq&   C_3\|\mathscr A_L(f_{\mathcal{G}})\|_{L^\infty({Q})}
\end{eqnarray}
for some $\displaystyle C_3=\f{C}{\delta^\theta \sqrt{1-\delta^{2\theta}}}$.

Note that $\support  f_{\mathcal{G},2} \subset Q_{\lambda,i} $. Thus
$$
\big|(f_{\mathcal{G},2})_{Q_{\lambda,i}} -(f_{\mathcal{G},2})_{
\widetilde{Q}_{\lambda,i}}\big|= \left| \f{1}{\mu(Q_{\lambda,i})}-\f{1}{
\mu(\widetilde{Q}_{\lambda,i})} \right|
\left| \int_{Q_{\lambda,i}}f_{\mathcal{G},2}(x)d\mu \right|\leq |f_{\mathcal{G},
2}|_{Q_{\lambda,i}}.
$$
Taking $Q_{\lambda,i}$ as $Q$ and $f_{\mathcal{G},2}$ as $f_{\mathcal{G}}$
in \eqref{e4.14},
we have
\begin{eqnarray}
\label{e4.16}
\big|(f_{\mathcal{G},2})_{Q_{\lambda,i}} -(f_{\mathcal{G},2})_{
\widetilde{Q}_{\lambda,i}}\big|   &\leq &  |f_{\mathcal{G},
2}|_{Q_{\lambda,i}}\leq C_1 \|\mathscr A_L(f_{\mathcal{G},2})
 \|_{L^\infty({Q_{\lambda,i}})}
\leq  C_1  \|\mathscr A_L(f_\mathcal{G}) \|_{L^\infty({Q})},
\end{eqnarray}
where $C_1$ is the constant in \eqref{e4.14}.

Therefore, when $\lambda\geq \lambda_0$,
\begin{eqnarray}
\label{e4.17}
\big|(f_\mathcal{G})_{Q_{\lambda,i}}\big|  &\leq &
\big|(f_\mathcal{G})_{Q_{\lambda,i}} - (f_\mathcal{G})_{
\widetilde{Q}_{\lambda,i}}\big| +\big|(f_\mathcal{G})_{
\widetilde{Q}_{\lambda,i}}\big|  \notag\\
&\leq & \big|(f_{\mathcal{G},1})_{Q_{\lambda,i}} - (f_{\mathcal{G},
1})_{\widetilde{Q}_{\lambda,i}}\big|  +\big|(f_{\mathcal{G},2})_{
Q_{\lambda,i}} - (f_{\mathcal{G},2})_{\widetilde{Q}_{\lambda,i}}
\big| +  \big|(f_\mathcal{G})_{\widetilde{Q}_{\lambda,i}}\big|
\notag\\
&\leq & C_4  \|\mathscr A_L
(f_\mathcal{G}) \|_{L^\infty({Q})}
 +\lambda
\end{eqnarray}
with $C_4=C_1+C_3.$

\smallskip

Now, take a parameter $\lambda'>0$ which will be chosen later. Similarly, let
$\{Q_{\lambda+\lambda',j}\}$,
$Q_{\lambda+\lambda',j}\in \mathcal{D}^{b}$, be the maximal dyadic cubes
such that
$|f_{\mathcal{G}} |_{Q_{\lambda+\lambda',j}}>\lambda+\lambda'$. Then for all $Q_{\lambda+\lambda',j}$, we have
$\mu(Q_{\lambda+\lambda',j})\leq \f{1}{\lambda+\lambda'}\int_{Q_{\lambda+\lambda',j}}
|f_{\mathcal{G}}| d\mu(x)$. From the selection rule of $Q_{\lambda+\lambda',j}$ and $Q_{\lambda,i}$,
we know that if $Q_{\lambda+\lambda',j}\cap Q_{\lambda,i}\neq \emptyset$
then $Q_{\lambda+\lambda',j}\subset Q_{\lambda,i}$.
For every  fixed $Q_{\lambda,i}$, by \eqref{e4.17}
\begin{eqnarray}\label{em1}
\mu\left(\Omega_{\lambda+\lambda'}\cap Q_{\lambda,i}\right)
&\leq & \f{1}{\lambda+\lambda'} \sum_{j:\  Q_{\lambda+\lambda',j}
\subset Q_{\lambda,i}}  \int_{Q_{\lambda+\lambda',j}}
|f_{\mathcal{G}}| d\mu(x)\nonumber\\
&\leq &  \f{1}{\lambda+\lambda'}  \int_{Q_{\lambda,i}} \left|f_{\mathcal{G}}
-(f_\mathcal{G})_{Q_{\lambda,i}}\right|  d\mu(x) +\f{ \left|
(f_\mathcal{G})_{Q_{\lambda,i}} \right|}{\lambda+\lambda'}
\mu(\Omega_{\lambda+\lambda'}\cap Q_{\lambda,i})\nonumber\\
&\leq & \f{1}{\lambda+\lambda'}  \int_{Q_{\lambda,i}} \left|f_{\mathcal{G},1}
-(f_{\mathcal{G},1})_{Q_{\lambda,i}}\right|d\mu(x)+ \f{2}{\lambda+
\lambda'}  \left|f_{\mathcal{G},2}\right|_{Q_{\lambda,i}} \mu(Q_{\lambda,i})\nonumber\\
& &+  \f{C_4  \|\mathscr A_L(
f_\mathcal{G}) \|_{L^\infty({Q})}
 +\lambda}{\lambda+\lambda'} \mu(\Omega_{\lambda+\lambda'}\cap Q_{\lambda,i}).
\end{eqnarray}
For $x\in Q_{\lambda,i}$, we apply the argument in  \eqref{e4.15} and \eqref{e4.16} to obtain
\begin{eqnarray}\label{em2}
\left|f_{\mathcal{G},1}(x)
-(f_{\mathcal{G},1})_{Q_{\lambda,i}}\right|\leq C_3 \|\mathscr A_L
(f_{\mathcal{G}})
\|_{L^\infty({Q})}
\end{eqnarray} and
\begin{eqnarray}\label{em3}
\left|f_{\mathcal{G},2}\right|_{Q_{\lambda,i}}
\leq C_1 \|\mathscr A_L(f_{\mathcal{G}})
\|_{L^\infty({Q})}.
\end{eqnarray}
 Putting    \eqref{em2} and \eqref{em3} into   \eqref{em1}, we have
\begin{eqnarray*}
\mu\left(\Omega_{\lambda+\lambda'}\cap Q_{\lambda,i}\right)
&\leq & \f{C_5\|\mathscr A_L(f_{\mathcal{G}})
\|_{L^\infty({Q})}}
{\lambda+\lambda'} \mu(Q_{\lambda,i})
 +\f{C_4 \|\mathscr A_L(f_\mathcal{G})
 \|_{L^\infty({Q})}
 +\lambda}{\lambda+\lambda'} \mu(\Omega_{\lambda+\lambda'}\cap Q_{\lambda,i})
\end{eqnarray*}
with $C_5=2C_1+C_3.$
Therefore,
\[ \mu(\Omega_{\lambda+\lambda'}\cap Q_{\lambda,i})  \leq \f{C_5\|\mathscr A_L(f_{\mathcal{G}})\|_{
L^\infty({Q})}}{
\lambda'-C_4\|\mathscr A_L(f_\mathcal{G})
\|_{L^\infty({Q})} }
 \mu(Q_{\lambda,i}).\]
This tells us that for every $\lambda\geq
\lambda_0$,
\[\mu(\Omega_{\lambda+\lambda'}) \leq \f{C_5
\|\mathscr A_L(f_{\mathcal{G}})\|_{L^\infty({Q})}}{\lambda'-
 C_4
\|\mathscr A_L(f_\mathcal{G})\|_{L^\infty({Q})} }
\mu(\Omega_{\lambda}).\]

Let
$\lambda'=\lambda_0$.
For $\lambda\geq \lambda_0$, there exists a unique
$k\geq 1$ such that $k\lambda_0 \leq   \lambda < (k+1)\lambda_0$. Therefore,
\begin{eqnarray*}
\mu(E_\lambda)&\leq& \mu(\Omega_\lambda) \leq \mu(\Omega_{k
\lambda_0})\\
&\leq&\f{C_5
\|\mathscr A_L(f_{\mathcal{G}})\|_{L^\infty({Q})}}{\lambda_0-
C_4
\|\mathscr A_L(f_\mathcal{G})\|_{L^\infty({Q})} }
\mu(\Omega_{(k-1)\lambda_0}).
\end{eqnarray*}
Now  choose the constant ${\mathscr N}=4C_4$, and then $\lambda_0=4C_4\|\mathscr A_L(f_\mathcal{G})\|_{L^\infty({Q})}$.
Iterating  $k-1$ times,  we notice that $k-1>\lambda/\lambda_0 \, -2$,
\begin{eqnarray}\label{eooo}
\mu(E_\lambda)&\leq & \exp\left\{-(k-1) \log \left(\f{\lambda_0-
C_4 \|\mathscr A_L(f_\mathcal{G})\|_{L^\infty({Q})} }
{C_5\|\mathscr A_L(f_{\mathcal{G}})\|_{L^\infty({Q})}}
\right) \right\}
\mu(\Omega_{\lambda_0}) \nonumber\\
&\leq& C_2 \exp\left(-\f{{c_1} \lambda}{\|\mathscr A_L(f_\mathcal{G})
\|_{L^\infty({Q})}}\right) \mu({Q}),
\end{eqnarray}
where
\[{c_1}=\f{1}{4C_4} \log\left(\f{3C_4}{C_5}
\right),\  \  C_2=\left(\f{3C_4}{C_5}
\right)^2=\exp(8C_4{c_1}).
\]

\smallskip

\noindent
{\it Case (2):    $\lambda< \lambda_0$.}

In this case,  we write
\begin{eqnarray*}
\mu(E_\lambda)&<&\exp\left(\f{c_1 \lambda_0}{\|\mathscr A_L(
f_\mathcal{G})\|_{L^\infty({Q})}}
\right) \exp\left(-\f{c_1 \lambda}{\|\mathscr A_L(
f_\mathcal{G})\|_{L^\infty({Q})}}
\right)\mu(Q)\\
&=&\sqrt{C_2} \exp\left(-\f{c_1 \lambda}{\|
\mathscr A_L(f_\mathcal{G})\|_{L^\infty({Q})}}
\right) \mu({Q}),
\end{eqnarray*}
which, in combination with estimate \eqref{eooo} of {\it Case (1)}, yields  \eqref{e4.11}.

Estimate \eqref{e4.12} is a consequence of \eqref{e4.11}.
The proof of Proposition~\ref{le4.4} is complete.
\end{proof}

\smallskip

 \subsection{Exponential-square estimates}
To prove our main  Theorem~\ref{th4.7} below, we first
 show that the boundedness of the discrete   square function
implies its  local exponential-square integrability. Recall that
 $c_1$ and  $C_2$ are   two  constants  given in \eqref{e4.11}.
Then we have the following result.

\begin{theorem}	\label{prop4.5}
 Suppose that
$L$ is a densely-defined operator on $L^2(X)$ satisfying
\textbf{(H1)}, \textbf{(H2)} and \textbf{(H3)}.  Let $f\in L^2(X,d\mu)$. For a fixed  $Q\in \mathcal{D}^b$ for some $b\in
\{1,2,\cdots,K\}$, and $f_{\mathcal{G}}$  be given in \eqref{e4.10}.
  Then there exists   a constant
{$c_2=c_2(n,\rho)\in (0, 1)$} independent of $\mathcal{G}$, $K_\mathcal{G}$, $Q$ and $f$
  such that for all $\lambda>0$,
	\begin{equation}
	\label{e4.18}
	\mu\big(\left\{x\in {Q}:\     |f_\mathcal{G}(x)|>\lambda
\right\}\big)
  \leq   C_2 \exp\left(-{c_2} \f{\lambda^2}
  {\|\mathscr A_L(f_\mathcal{G})
  \|_{L^\infty({Q})}^2}\right)\, \mu({Q}).
	\end{equation}
	
	As a consequence, there exist two positive constants {$c_3=c_3(n,\rho)\in (0, c_2)$} and ${C}_6=C_6(n,\rho)$
	independent of  $\mathcal{G}$, $K_\mathcal{G}$, $Q$ and $f$ such that 	
\begin{equation}
\label{e4.19}
\f{1}{\mu(Q)} \int_{Q} \exp\left(\f{{c_3}|f_{\mathcal{G}}(x)|^2}
{\|\mathscr A_L(f_{\mathcal{G}})\|_{L^\infty(Q)}^2}\right)\, d\mu(x)
\leq C_6.	
\end{equation}

\end{theorem}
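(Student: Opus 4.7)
The plan is to upgrade the linear exponential decay established in Proposition~\ref{le4.4} to the Gaussian decay \eqref{e4.18} by a Calder\'on--Zygmund stopping-time iteration, in the spirit of the classical bootstrap used by Chang--Wilson--Wolff. Normalize so that $N:=\|\mathscr{A}_L(f_{\mathcal{G}})\|_{L^\infty(Q)}=1$, and fix a small parameter $\eta\in(0,1)$ to be chosen later (depending only on the constants $c_1,C_2$ of Proposition~\ref{le4.4}). Note first that when $\lambda\leq 1/\sqrt{c_2}$ the desired bound is automatic after enlarging $C_2$, so we may assume $\lambda$ is large and set $\eta:=1/\lambda$.

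For each such $\lambda$, build a tree of stopping cubes in $\mathcal{D}^b$ as follows. At level one, select the maximal dyadic subcubes $\{R_j^{(1)}\}\subset Q$ on which the partial discrete square function $\bigl(\sum_{E_2(P)\supsetneq R_j^{(1)},\,E_2(P)\in\mathcal{G}}\frac{|\lambda_P|^2}{\mu(P)}\chi_{E_2(P)}\bigr)^{1/2}$ first reaches the threshold $\eta$. Split $f_{\mathcal{G}}=g_1+\sum_j h_j^{(1)}$, where $g_1$ collects the atoms $a_P(f)$ with $E_2(P)$ not contained in any $R_j^{(1)}$, and each $h_j^{(1)}$ gathers the remaining atoms inside $R_j^{(1)}$ (which is supported on $R_j^{(1)}$ by Lemma~\ref{le3.1}(i)). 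By construction $\|\mathscr{A}_L(g_1)\|_{L^\infty(Q)}\leq\eta$, so Proposition~\ref{le4.4} yields $\mu(\{x\in Q:|g_1(x)|>t\})\leq C_2 e^{-c_1 t/\eta}\mu(Q)$ for all $t>0$. Iterate the same construction inside each $R_j^{(1)}$ to obtain families $\mathcal{F}_m=\{R_k^{(m)}\}$ of disjoint stopping cubes at level $m$ and corresponding pieces $g_1,g_2,\dots,g_m$, each satisfying a linear-exponential tail estimate with constant $\eta$. The crucial quantitative ingredient is that the bad measure contracts by a fixed factor $\theta=\theta(\eta)<1$ at each step: combining the stopping condition $\sum_j\eta^2\mu(R_j^{(1)})\leq\sum_j\int_{R_j^{(1)}}\mathscr{A}_L(f_{\mathcal{G}})^2 d\mu$ with the $L^2$ identity \eqref{e4.14} and the near-orthogonality of atoms from Lemma~\ref{le3.1}(iv) shows $\sum_j\mu(R_j^{(1)})\leq\theta\mu(Q)$ for an appropriate choice of parameters.

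Setting $m\sim\lambda/\eta\sim\lambda^2$ iterations and combining the exponential tail for each $g_i$ (with $|g_i|>\lambda/m$) with the geometric bound $\theta^m$ on the residual bad set produces $\mu(\{x\in Q:|f_{\mathcal{G}}(x)|>\lambda\})\leq C_2\exp(-c_2\lambda^2)\mu(Q)$. The main obstacle is exactly this geometric decay of the residual bad set under iteration: it is not a formal consequence of $\|\mathscr{A}_L\|_\infty\leq 1$, and must be extracted from the dyadic $L^2$-theory of the atoms $a_P(f)$ developed in Lemma~\ref{le3.1} and inequality \eqref{e4.14}. Finally, \eqref{e4.19} follows from \eqref{e4.18} by the standard layer-cake integration
\[
\int_Q \exp\!\left(\frac{c_3|f_{\mathcal{G}}(x)|^2}{N^2}\right)d\mu(x) = \mu(Q) + \int_0^\infty 2c_3\lambda\,e^{c_3\lambda^2/N^2}\mu\bigl(\{x\in Q:|f_{\mathcal{G}}(x)|>\lambda\}\bigr)\,d\lambda,
\]
choosing any $c_3<c_2$ so that the integral is bounded by $C_6\mu(Q)$.
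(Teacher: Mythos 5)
Your overall architecture (bootstrap the linear-exponential bound of Proposition~\ref{le4.4} to a Gaussian one, then get \eqref{e4.19} by layer-cake) matches the paper's goal, but the engine you propose has a genuine gap at its central step. With the normalization $\|\mathscr{A}_L(f_{\mathcal G})\|_{L^\infty(Q)}=1$ and threshold $\eta=1/\lambda$, the claimed contraction $\sum_j\mu(R_j^{(1)})\leq\theta\mu(Q)$ with a fixed $\theta<1$ does not follow from the stopping condition: Chebyshev applied to the stopping inequality $\eta^2\sum_j\mu(R_j^{(1)})\leq\sum_j\int_{R_j^{(1)}}\mathscr{A}_L(f_{\mathcal G})^2\,d\mu$ together with $\|\mathscr{A}_L(f_{\mathcal G})\|_\infty\leq 1$ only gives $\sum_j\mu(R_j^{(1)})\leq\eta^{-2}\mu(Q)=\lambda^2\mu(Q)$, which is vacuous, and indeed if $\mathscr{A}_L(f_{\mathcal G})\equiv 1$ on $Q$ the stopping cubes at level $\eta\ll1$ cover essentially all of $Q$, so no fixed-factor decay of the bad set is available. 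This is fatal to the arithmetic of your iteration, because at level $t=\lambda/m\sim\eta$ the per-piece tail from Proposition~\ref{le4.4} is only $C_2e^{-c_1}$, a constant, so the entire Gaussian decay was supposed to come from $\theta^m$. A secondary problem: the assertion $\|\mathscr{A}_L(g_1)\|_{L^\infty(Q)}\leq\eta$ is not correct as stated, since a single term $|\lambda_P|^2/\mu(P)$ can be as large as $\|\mathscr{A}_L(f_{\mathcal G})\|_\infty^2=1$, so the good part can overshoot the threshold by an amount comparable to $1$, not $\eta$; some separate treatment of cubes with $|\lambda_P|^2/\mu(P)\geq 1$ is unavoidable.

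For comparison, the paper avoids both issues by a different mechanism. It first reduces \eqref{e4.18}, via a scaling/self-improvement trick (replace $f$ by $\lambda f/(2C_7\|\mathscr{A}_L(f_{\mathcal G})\|_{L^\infty(Q)}^2)$), to the exponential-moment inequality \eqref{e4.20}, i.e. $\int_Q\exp\bigl(c_4(|f_{\mathcal G}|-C_7\|\mathscr{A}_L(f_{\mathcal G})\|_{L^\infty(Q)}^2)\bigr)\,d\mu\leq C_2\mu(Q)$, which plays the role of the exponential-martingale bound in Chang--Wilson--Wolff. To prove it, the family is split into $\mathcal G_2$ (cubes with $|\lambda_P|^2/\mu(P)\geq1$, absorbed pointwise via $|f_{\mathcal G_2}|\leq C_1\mathscr{A}_L^2(f_{\mathcal G})$ — this is exactly how the overshoot problem is handled) and $\mathcal G_1$, which is organized by the ``flag'' algorithm into $N_0\lesssim\|\mathscr{A}_L(f_{\mathcal G_1})\|_{L^\infty(Q)}^2+1$ layers each with discrete square function at most $2$; the John--Nirenberg bound \eqref{e4.12} is then iterated multiplicatively across layers, the telescoping oscillation estimate \eqref{e4.24} controlling the averages of the previous layers on each maximal cube, yielding a loss $\exp(C\|\mathscr{A}_L(f_{\mathcal G_1})\|_{L^\infty(Q)}^2)$ rather than a measure-contraction. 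If you want to salvage a stopping-time route, the decay per step has to come from an estimate of good-$\lambda$ type (rise by a fixed amount while the local square function is small, using \eqref{e4.12} inside each stopping cube), not from Chebyshev on the measure of the stopping cubes; as written, your proposal does not provide that ingredient.
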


 \begin{proof}
To prove  \eqref{e4.18}, it is enough  to  show  that for {$c_4= c_1/4$}, there exists  a positive constant {$C_7=C_7(n,\rho)$}
independent of  $f$
 such that
 \begin{equation}
\label{e4.20}
\int_{Q} \exp \left( {c_4} \left(|f_\mathcal{G}|-C_7
\|\mathscr A_L(f_\mathcal{G})\|_{L^\infty({Q})}^2\right)
\right) d\mu(x)    \leq   C_2
\mu({Q}).
\end{equation}
 Once \eqref{e4.20} is proven, \eqref{e4.18} follows readily.
Indeed,  we take   $g=\displaystyle
 {\lambda } f/{[ 2C_7
 \|\mathscr A_L(f_{\mathcal{G}})\|_{L^\infty(Q)}^2]}
\in L^2(X)$.   Replacing $f_\mathcal{G}$ by
$g_\mathcal{G}$ in \eqref{e4.20} and using the fact $|(Cf)_{\mathcal{G}}|
=C|f_{\mathcal{G}}|$ and
$\mathscr A_L((Cf)_{\mathcal{G}})=C\mathscr A_L(f_{\mathcal{G}}) $ for any
constant $C>0$, we have
$$
\int_{Q} \exp\Bigg\{{c_4} \Bigg(  \f{\lambda }
{2C_7 \|
\mathscr A_L(f_{\mathcal{G}})
\|_{L^\infty(Q)}^2   } \Big( |f_{\mathcal{G}} | -
\f{\lambda}{2}\Big)
 \Bigg)\Bigg\} \, d\mu(x)  \leq C_2\, \mu(Q).
 $$
 From this, it is easy to see that  \eqref{e4.18} holds for ${c_2=c_4}/(4C_7).$

We now turn to verify  \eqref{e4.20}. Observe that if $ \|\mathscr A_L(f_\mathcal{G}) \|_{L^\infty({Q})} \leq 2$,
we then apply \eqref{e4.12}  to obtain
\begin{eqnarray}\label{e4.21}
\int_{Q} \exp \left( {c_4} |f_\mathcal{G}|
\right) d\mu(x) \leq  C_2\mu({Q}).
\end{eqnarray}
Then it  reduces to show  \eqref{e4.20} for  the case   $\left\|\mathscr A_L
(f_\mathcal{G})\right\|_{L^\infty(Q)}
>2$.
Define
 $
\displaystyle \mathcal{G}_1=\{E_2(P)\in \mathcal{G}:\    \
 |\lambda_P|^2/\mu(P) <1\}
  $ and  $\mathcal{G}_2=\mathcal{G}\setminus
 \mathcal{G}_1$.
 We decompose $f_\mathcal{G}$ into two parts:
\[f_\mathcal{G}=\sum_{ E_2(P)\in \mathcal{G}_1}  a_P(f)+
\sum_{E_2(P)\in \mathcal{G}_2}  a_P(f)   =:
f_{{\mathcal{G}}_1}+f_{{\mathcal{G}}_2}.
\]
Notice that $\|a_P(f)\|_{L^\infty({Q})} \leq C|\lambda_P|/{\mu(P)^{1/2}}
\leq C|\lambda_P|^2 /\mu(P)$ for $E_2(P)\in \mathcal{G}_2$,
 where $C$ is in (ii) of Lemma~\ref{le3.1} and $0<C\leq C_1$ and
 $C_1$ is the constant in \eqref{e4.14}.
 This tells us that
$$
|f_{\mathcal{G}_2}(x)|\leq C_1\sum_{  E_2(P)\in
\mathcal{G}_2}   \frac{\left|\lambda_P\right|^2}{\mu(P) } \chi_{E_2(P)}(x) .
$$
From this, we know
\begin{eqnarray}\label{em5}
|f_\mathcal{G}(x)| -C_1\mathscr A_L^2(
f_{\mathcal{G}}) (x)
 \leq  |f_{\mathcal{G}_1}(x)|.
\end{eqnarray}

To continue, we consider the following two cases.

\smallskip

\noindent
{\it Case (1)}:  $\left\|\mathscr A_L(f_{\mathcal{G}_1})\right\|_{
L^\infty({Q})} \leq 2$.

\smallskip

In this case, we  apply  \eqref{e4.21}  for $f_{\mathcal{G}_1}$ and
\eqref{em5} to obtain
\begin{eqnarray}
\label{e4.22}
 \int_{Q} \exp\left\{{c_4}\left(|f_\mathcal{G}(x)| -C_1\|\mathscr A_L(
f_{\mathcal{G}})\|^2_{L^\infty(Q)}
\right)\right\} d\mu(x)
&\leq& \int_{Q} \exp\left\{{c_4}\left(|f_\mathcal{G}(x)| -C_1\mathscr A_L^2(
f_{\mathcal{G}}) (x)
\right)\right\} d\mu(x)\nonumber\\
&\leq&   \int_{Q} \exp\left({c_4} \left|f_{\mathcal{G}_1}(x)\right|\right)  d\mu(x)
\nonumber\\
&\leq&    C_2 \mu({Q}).
\end{eqnarray}

\smallskip

\noindent
{\it Case (2)}: $\left\|\mathscr A_L(f_{\mathcal{G}_1})\right
\|_{L^\infty({Q})} > 2$.

 In this case, we assume $Q\in \mathcal{D}_{k_0}^b$ for some $k_0\leq K_\mathcal{G}$.
 Let
\[
\{R_1,R_2,\cdots ,R_{M_0}\}=Ch^{(K_{\mathcal{G}}-k_0)}(Q).
\]
Then it follows that for every $R_j$, $1\leq j\leq M_0$,
and $E_2(P)\in \mathcal{G}$, either $R_j\cap E_2(P)=\emptyset$ or $R_j\in Ch^{(\ell)}(E_2(P))$
for some $\ell\geq 0$. For every $j=1, 2, \cdots, M_0,$ set
$$
\L_j=\left\{ E_2(P)\in {\mathcal G}_1:   R_j\subset  E_2(P) \right\}.
$$

Let us  consider $\L_1$. Note that when  $\left\|\mathscr A_L(f_{\mathcal{G}_1})\right\|_{L^\infty(R_1)} = 0$,
there does not exist   $E_2(P)\in \mathcal{G}_1$ such that $R_1\in Ch^{(\ell)}(E_2(P))$ for some $\ell\geq 0$;
when $\left\|\mathscr A_L(f_{\mathcal{G}_1})\right\|_{L^\infty(R_1)} \neq 0$,
there are a constant   $K_1$ with $1\leq K_1\leq \min\{2K_{\mathcal{G}}+1, K_\mathcal{G}-k_0+1 \}$ and
 a chain $\L_1:$
\begin{equation*}
\L_1:\ \ \    \   E_2(P_1)\supset  E_2(P_2)  \supset E_2(P_3)
\supset \cdots \supset   E_2(P_{K_1}), \
 \   \   \    E_2(P_k)\in \mathcal{G}_1, \  R_1\subset E_2(P_k),   \  1\leq k\leq K_1.
\end{equation*}
In the chain $\L_1$, if there exist  $E_2(P_k)$ and $E_2(P_{k+1})$  with
$E_2(P_k)=E_2(P_{k+1})$,  then    $E_2(P_{k+1})\in Ch^{(\ell)}
E_2(P_{k})$ for some $\ell\geq 1$.
Note that $E_2(P)\in \mathcal{G}_1$
means that $|\lambda_P|^2/\mu(P) <1$. We use the
following  method to classify cubes in the chain $\L_1$ in the following way:

\smallskip

\noindent
{  (i):}
 If
  \[\left(\sum_{k=1}^{K_1} \f{|\lambda_{P_k}|^2}{\mu(P_k)}\right)^{1/2}\leq 2,\]
   set $\textbf{flag}(E_2(P_k))=1$ for $1\leq k\leq K_1$;

 \medskip

 \noindent
{ (ii):}
  Otherwise, there exists a sequence   of $N_1+1$ natural numbers  $0=c_0<2\leq c_1<c_2<\cdots <c_{N_1}\leq K_1-1$ such that
      \[1<\Bigg(\sum_{k=c_i+1}^{c_{i+1}} \f{|\lambda_{P_k}|^2}{\mu(P_k)}\Bigg)^{1/2}
  \leq 2, \   \  \  \Bigg(\sum_{k=c_i+1}^{c_{i+1}+1} \f{|\lambda_{P_k}|^2}{\mu(P_k)}\Bigg)^{1/2}
 >2,  \  \  \   i=0,1,\cdots, {N_1 -1},\]
 and
 \[\Bigg(\sum_{k=c_{N_1}+1}^{K_1} \f{|\lambda_{P_k}|^2}{\mu(P_k)}\Bigg)^{1/2}
  \leq 2.\]
  For every $i$ with $0\leq i\leq N_1-1$, set $\textbf{flag}(E_2(P_k))=i+1$ if $c_i +1\leq k\leq c_{i+1}$.
  When $k\in [c_{N_1}+1, K_1]$, set $\textbf{flag}(E_2(P_k))=N_1+1$.

  To understand the above algorithm, we take an example of the case $K_1=8$,  $c_1=3$, $c_2=6$   and $N_1=2$
    to classify cubes in the chain $\L_1$ as follows.

\smallskip

\begin{center}
\begin{tikzpicture}
    \small
   \cube1[70,0,4,p1,(5,12.4)] {\scriptsize  $ E_2(P_1)$ };
   \cube1[60,0,4,p2,(5,10)] {\scriptsize  $ E_2(P_2)$ };
   \cube1[55,0,4,p3,(5,7.9)] {\scriptsize  $ E_2(P_3)$ };
   \cube1[48,0,4,p4,(5,5.9)] { \scriptsize $E_2(P_4)$ };
   \cube1[41,0,4,p5,(5,4.2)] {\scriptsize  $E_2(P_5)$ };
   \cube1[35,0,4,p6,(5,2.7)] { \scriptsize $E_2(P_6)$ };
   \cube1[30,0,4,p7,(5,1.3)] { \scriptsize $E_2(P_7)$ };
   \cube1[26,0,4,p8,(5,0)] { \scriptsize $E_2(P_8)$ };

   \draw[-latex] (p1) .. node[left=1.6cm, above=0.5mm] {  $
    \Bigg(\sum\limits_{k=1}^{3} \frac{|\lambda_{P_k}|^2}{\mu(P_k)}\Bigg)^{1/2}
  \leq 2, $}  node[left=1.6cm, below=0.5mm] {  $   \Bigg(
  \sum\limits_{k=1}^{4} \frac{|\lambda_{P_k}|^2}{\mu(P_k)}\Bigg)^{1/2}
  >2.  $} controls (2.2,12.3) and (2.2,7.8) ..  (p3);

  \draw[-latex] (p1) .. node[right=1.6cm,above=0.5mm] { $
  \textbf{flag}(E_2(P_k))=
  \textbf{1},$}   node[right=1.6cm,below=0.5mm] { $k=1,2,3.$}
  controls (7.8,12.3) and (7.8,7.8) ..  (p3);

     \draw[-latex] (p4) .. node[left=1.6cm, above=0.5mm] {  $
     \Bigg(\sum\limits_{k=4}^{6} \frac{|\lambda_{P_k}|^2}{\mu(P_k)}\Bigg)^{1/2}
  \leq 2, $}  node[left=1.6cm, below=0.5mm] {  $   \Bigg(\sum\limits_{k=4}^{7}
  \frac{|\lambda_{P_k}|^2}{\mu(P_k)}\Bigg)^{1/2}
  >2.  $} controls (2.8,5.8) and (2.8,2.8) ..  (p6);

  \draw[-latex] (p4) .. node[right=1.6cm,above=0.5mm] { $
  \textbf{flag}(E_2(P_k))=
  \textbf{2},$} node[right=1.6cm,below=0.5mm] { $k=4,5,6.$}
  controls (7.2,5.8) and (7.2,2.8) ..  (p6);

     \draw[-latex] (p7) .. node[left] { $ \Bigg(\sum\limits_{k=7}^{8} \frac{
     |\lambda_{P_k}|^2}{\mu(P_k)}\Bigg)^{1/2}
  \leq 2.$} controls (3.6,1.2) and (3.6,0.1) ..  (p8);
  \draw[-latex] (p7) .. node[right=1.6cm,above=0.5mm] { $
  \textbf{flag}(E_2(P_k))=
  \textbf{3},$}node[right=1.6cm,below=0.5mm]{$k=7,8.$}
  controls (6.4,1.2) and (6.4,0.1) ..  (p8);

\end{tikzpicture}\\
\textbf{Figure 1.} The case $K_1=8$,  $c_1=3$, $c_2=6$ and $N_1=2$.
\end{center}

\smallskip

Now we  estimate $N_1$. Note that  for each $0\leq i\leq N_1-1$
\begin{equation*}
  1<\sum_{k=c_i+1}^{c_{i+1}} \f{|\lambda_{P_k}|^2}{\mu(P_k)}
  \leq 4,
  \end{equation*}
which  gives
\begin{equation*}
  N_1<\sum_{i=0}^{N_1-1}\sum_{k=c_i+1}^{c_{i+1}} \f{|\lambda_{P_k}|^2}{\mu(P_k)}
  =\sum_{k=1}^{c_{N_1}} \f{|\lambda_{P_k}|^2}{\mu(P_k)} \chi_{E_2(P)}(x_0) \leq
   \sum_{k=1}^{K_1} \f{|\lambda_{P_k}|^2}{\mu(P_k)} \chi_{E_2(P)}(x_0) \leq 4N_1+4
  \end{equation*}
for some $x_0\in R_1$, and so
\begin{equation*}
  N_1<  \|\mathscr A_L(
f_{\mathcal{G}_1})
 \|_{L^\infty({R_1})}^2\leq  \|\mathscr A_L(
f_{\mathcal{G}_1})
 \|_{L^\infty({Q})}^2.
  \end{equation*}
Then we have
\[\max\left\{\textbf{flag}(E_2(P_k)):\   k=1,2,\cdots, K_1\right\}=\textbf{flag}(E_2(P_{K_1})) =N_1+1< \|\mathscr A_L(
f_{\mathcal{G}_1})
 \|_{L^\infty({Q})}^2+1.\]

\smallskip

Next we consider  the chain $\L_j$,  $j=2,3,\cdots, M_0$.
When $\left\|\mathscr A_L(f_{\mathcal{G}_1})\right\|_{L^\infty(R_j)} \neq 0$,
we use the algorithm as in $\L_1$ above to classify   cubes in the
 chain {$\L_j$}. For each $E_2(P)$ in the chain $\L_j$,    $\textbf{flag} (E_2(P))$ is obtained.
 Observe that if $E_2(P_k)$ is a
common one contained in two chains, then all $E_2(P_\ell)$,  $\ell=1,\ldots,k$,
are common one. This fact, together with   our algorithm above, determines that one can
verify that
the function
$\textbf{flag}:\  \mathcal{G}_1\to \mathbb{N}_+$ is well-defined and single-valued.

From the above algorithm, we know   that for every
$E_2(P)\in \mathcal{G}_1$, it  can  be assigned a label ${\textbf{flag}}(E_2(P))$, and
\begin{equation}
\label{e4.23}
N_0:=\max\left\{\textbf{flag}(E_2(P)):\    E_2(P)\in \mathcal{G}_1\right\} <  \|\mathscr A_L(
f_{\mathcal{G}_1})
 \|_{L^\infty({Q})}^2+1.
\end{equation}

For $ 1\leq i\leq  N_0$, denote
\[\mathcal{G}_{(i)}=\left\{E_2(P)\in \mathcal{G}_1:\ \ \
\textbf{flag}(E_2(P))=i\right\} .
\]
Define
\[f_{\mathcal{G}_{(i)}}=\sum_{E_2(P)\in \mathcal{G}_{(i)}} a_P(f),\   \
1\leq i\leq  N_0,\]
which yields
\[
f_{\mathcal{G}_1} =\sum_{i=1}^{N_0}  f_{\mathcal{G}_{(i)}}.
\]
Set
\[\mathcal{B}_{(j)}=\left\{E_2(P)\in \mathcal{G}_{(j)}:\   \forall
\ E_2(R)\in \mathcal{G}_{(j)}, \ \text{either}\   E_2(R)\in Ch^{(\ell)}
(E_2(P)) \     \text{for}\  \ell\geq 0\
   \text{or}\    E_2(R)\cap
E_2(P)=\emptyset\right\}.
\]
That is, $\mathcal{B}_{(j)}$ is the set of maximal cubes in $\mathcal{G}_{(j)}$.
With the notation above, we  claim that:

\smallskip

\noindent
{\bf (1)}
 For every $ 1\leq j\leq  N_0,$
	\begin{eqnarray}\label{ecc}
	\left\|\mathscr A_L(f_{\mathcal{G}_{(j)}}) \right\|_{
		L^\infty({Q})}\leq 2;
	\end{eqnarray}
	
\smallskip

\noindent
{\bf (2)}  There exists constant {$C_8=C_8(n,\rho)>0$} such that for every $R\in
	\mathcal{B}_{(k)}$, $2\leq k\leq N_0$,
	\begin{equation}
	\label{e4.24}
	\Bigg|   \Big(  \sum_{j=1}^{k-1}  f_{\mathcal{G}_{(j)}}(x)\
	\Big)  -  \Big( \sum_{j=1}^{k-1}  f_{\mathcal{G}_{(j)}}\
	\Big)_{{R}}  \Bigg|     \leq C_8, \    \  a.e.\
	x\in {R}\in  \mathcal{B}_{(k)}.
	\end{equation}

The  proof of  \eqref{ecc} is simple.   Indeed,  since for every $x\in Q$,
 there exists a unique $R_{i}\in Ch^{(K_{\mathcal{G}}-k_0)}(Q) $, $1\leq i\leq M_0$
such that $x\in R_{i}$, and
$\mathscr A_L(f_{\mathcal{G}_{(j)}})(x)=\|\mathscr A_L(f_{\mathcal{G}_{(j)}})\|_{L^\infty(R_i)}$.
It follows from the above  algorithm that
$$
\mathscr A_L(f_{\mathcal{G}_{(j)}})(x)=\Bigg(\sum_{\textbf{flag}(E_2(P))=j,\  R_i\subset E_2(P)}
 \f{|\lambda_P|^2 }{\mu(P)}\Bigg)^{1/2}\leq 2
$$
as desired.
We now verify  \eqref{e4.24}.
From  (iii) of Lemma~\ref{le3.1}  and
$K_{\mathcal{G}}<\infty$, there exists a point $\tilde{x}\in R$ such that $
  \ \sum\nolimits_{j=1}^{k-1}  f_{\mathcal{G}_{(j)}}
 (\tilde{x})\
  =   \big( \ \sum\nolimits_{j=1}^{k-1}  f_{\mathcal{G}_{(j)}}\
\big)_{{R}} $.
Notice that for every $E_2(P)\in \mathcal{G}_{(j)}$ with $R\subset E_2(P)$,
there exists some $\ell$ with $ k-j\leq \ell\leq \min\{2K_{\mathcal{G}},K_{\mathcal{G}}-k_0\}$ such that
$R\in Ch^{(\ell)}(E_2(P))$.   Denote $R\in \mathcal{D}_{\ell_0}^b$ and so $E_2(P)\in \mathcal{D}_{\ell_0-\ell}^b$.
 Note that $E_2(P)\in \mathcal{G}_1$
means that $|\lambda_P|^2/\mu(P) <1$. Therefore, one can apply  (iii) of
Lemma~\ref{le3.1}  to show that  for every $j=1,2,\cdots, k-1$ and $x\in R$,
\begin{eqnarray*}
\left|f_{\mathcal{G}_{(j)}}(x)-f_{\mathcal{G}_{(j)}}(\tilde{x})\right| &\leq&
\sum_{\substack{E_2(P)\in \mathcal{G}_{(j)}\\  R\subset E_2(P)}} \big|a_P(f)(x)-
a_P(f)(\tilde{x})\big|\\
&\leq & C \sum_{\substack{k-j\leq \ell\leq 2K_{\mathcal{G}}\\ E_2(P)\in
\mathcal{G}_{(j)},  R\in Ch^{(\ell)}(E_2(P))    }}  \Big(\f{d(x,x')}{
  \delta^{\ell_0-\ell+1}}\Big)^{\theta} \f{\lambda_P}{\mu(P)^{1/2}}\\
&\leq & C\sum_{\ell\geq k-j}   \delta^{
\theta(\ell-1)}  \\
&\leq & \f{C}{\sqrt{1-\delta^{2\theta}}}\delta^{\theta (k-j-1)},
\end{eqnarray*}
where the third inequality holds by using the fact that for $R\in Ch^{(\ell)}(E_2(P))$,
it follows from Proposition~\ref{prop4.2} that  for $x,x'\in R$,
\[\f{d(x,x')}{\delta^{\ell_0-\ell+1}}\leq C\delta^{-1} \f{\delta^{\ell_0}}{\delta^{\ell_0-\ell}} \leq C\delta^{\ell-1}.\]
\noindent Hence,
\begin{eqnarray*}
 \text{LHS of}\  \eqref{e4.24}   &\leq &   \sum_{j=1}^{k-1}  \left|f_{
 \mathcal{G}_{(j)}}(x)-f_{\mathcal{G}_{(j)}}(\tilde{x})\right|
 \leq  \f{C}{\sqrt{1-\delta^{2\theta}}} \sum_{j=1}^{k-1} \delta^{\theta(k-j-1)}
 \leq {C\over (1-\delta^{\theta})\sqrt{1-\delta^{2\theta}}  }.
\end{eqnarray*}
This  finishes the proof of   \eqref{e4.24} with $\displaystyle C_8={C\over (1-\delta^{\theta})
\sqrt{1-\delta^{2\theta}}  }$.

\smallskip

Next we  apply \eqref{ecc} and  \eqref{e4.24}  to derive  the   estimate \eqref{e4.20} by the iteration method.
	When ${R_{(k)}}\in \mathcal{B}_{(k)}, k=1,2, \cdots, N_0$,
we apply \eqref{e4.12} and \eqref{ecc} to obtain
\begin{eqnarray}\label{e4.25}
\int_{R_{(k)}} \exp \Big( {c_4} |f_{\mathcal{G}_{(k)}}|
\Big) d\mu(x) \leq  C_2\, \mu(R_{(k)}).
\end{eqnarray}
This, in combination with  \eqref{e4.24}, implies that for any $k\in \{2,3,\cdots,N_0\}$,
\begin{eqnarray}\label{e4.26}
&&\hspace{-1.2cm}\int_{{R_{(k)}}}    \exp\Big({c_4} \big|\sum _{j=1}^{k}
f_{\mathcal{G}_{(j)}}(x)\big| \Big)d\mu(x)\nonumber \\
&\leq&
\exp\Big({c_4} \big|  \sum_{j=1}^{k-1}  f_{\mathcal{G}_{(j)}}
\big|_{{R_{(k)}}} \Big)\int_{{R_{(k)}}}    \exp\Big({c_4} \big|
\big(\sum _{j=1}^{k-1}
f_{\mathcal{G}_{(j)}}(x)\big) - \big( \ \sum_{j=1}^{k-1}  f_{\mathcal{G}_{(j)}}\
\big)_{{R_{(k)}}}  \big| +{c_4} \left|f_{\mathcal{G}_{(k)}}(x)\right|\Big)
 d\mu(x) \notag\\
&\leq &   C_2\exp\big({c_4}
C_8\big)  \exp\Big({c_4} \big| \ \sum_{j=1}^{k-1}
f_{\mathcal{G}_{(j)}}\
\big|_{{R_{(k)}}}\Big)\mu(R_{(k)}).
\end{eqnarray}
For $x\in Q
\setminus \bigcup_{R_{(k)}\in \mathcal{B}_{(k)} }R_{(k)}$, we have that $f_{\mathcal{G}_{(k)}}(x)=0$
since  $\support a_P(f)\subset E_2(P)$. Then \eqref{e4.26}, together with  the convexity of the exponential
function, implies
\begin{eqnarray}\label{e4.27}
 \int_{{Q}} \exp\Big(c_4  \big|\sum _{j=1}^{k}
f_{\mathcal{G}_{(j)}}(x) \big|\Big)d\mu(x)
&=&  \sum_{{R_{(k)}}\in \mathcal{B}_{(k)} }
\int_{{R_{(k)}}}    \exp\Big(c_4 \big|\sum _{j=1}^{k}
f_{\mathcal{G}_{(j)}}(x)\big| \Big)d\mu(x)\nonumber\\
&&  + \int_{\displaystyle Q
\setminus \cup_{R_{(k)}\in \mathcal{B}_{(k)} }R_{(k)}}
\exp\Big(c_4 \big|\sum _{j=1}^{k-1}
f_{\mathcal{G}_{(j)}}(x)\big| \Big)d\mu(x) \nonumber\\
&\leq & C_2\exp\left(c_4 C_8\right)  \int_{Q} \exp
\Big(c_4 \big|\sum _{j=1}^{k-1}
f_{\mathcal{G}_{(j)}}(x)\big|\Big)d\mu(x).
\end{eqnarray}
Recall that $C_4$ is the constant given in \eqref{e4.17},
$C_2=\exp\left(8C_4 {c_1}\right)$, and   $ \left\|\mathscr A_L(f_{\mathcal{G}_1})
 \right\|_{L^\infty(Q)}^2> N_0-1.$
  By iteration, we have
\begin{eqnarray}\label{em4}
\int_{{Q}} \exp\left(c_4 |f_{\mathcal{G}_1}(x)|\right)d\mu(x)
&=&\int_{{Q}} \exp\Big(c_4 \big|\sum_{j=1}^{N_0}
f_{\mathcal{G}_{(j)}}(x)\big|\Big)d\mu(x)\nonumber\\
&\leq & C_2^{N_0 -1} \exp\left(c_4(N_0 -1)C_8\right)
\int_Q \exp\left(c_4 \left|f_{\mathcal{G}_{(1)}} (x)
\right|\right)d\mu(x)\nonumber\\
&\leq & C_2^{N_0} \exp(c_4 (N_0-1) C_8)  \mu(Q)\nonumber\\
&\leq&   C_2\exp\left\{c_4  \left(32C_4+C_8\right)(N_0-1)
\right\}\mu(Q)\nonumber\\
&\leq&  C_2\exp\left(c_4 (32C_4 +C_8) \|\mathscr A_L(f_{\mathcal{G}_1})
 \|_{L^\infty(Q)}^2
  \right)\mu(Q).
\end{eqnarray}
Finally, we set $C_7=32C_4 +C_8 +C_1 $ to   apply   \eqref{em5} and \eqref{em4}
to obtain
\begin{eqnarray}\label{222}
 \int_{Q} \exp\left(c_4 |f_{\mathcal{G}}(x)| \right)d\mu(x)
 & \leq& \int_{Q} \exp\left(c_4 \left(|f_{\mathcal{G}_1}(x)| +C_1\|\mathscr A_L(f_{\mathcal{G}})
 \|_{L^\infty(Q)}^2\right)\right)d\mu(x) \nonumber\\
  & \leq&   C_2 \exp\left(c_4  C_7\|\mathscr A_L(f_{\mathcal{G}})
 \|_{L^\infty(Q)}^2\right)  \mu(Q).
 \end{eqnarray}

From {\it Cases (1)} and {\it  (2)}, we have  obtained  \eqref{e4.20}.
 The proof of Theorem~\ref{prop4.5} is complete.
\end{proof}

As a consequence, we have the following corollary.

\begin{corollary}
\label{cor4.6}
For a fixed  $Q\in \mathcal{D}^b$ for some $b\in
\{1,2,\cdots,K\}$, and $f_{\mathcal{G}}$  be given in \eqref{e4.10}.
Let $0<p<\infty$. {There exists a positive constant $C=C(p,n,\rho)$ independent of $\mathcal{G}$, $K_\mathcal{G}$, $Q$ and $f$
  such that} for every nonnegative
$ V\in L^1_{\rm loc}(X)$ with $\int_{Q} V(x)d\mu(x)\neq  0$,
\begin{eqnarray}\label{e4.29}
\int_{Q}  |f_\mathcal{G}(x)|^p V(x)d\mu(x) \leq C  \left\|\mathscr A_{L}
(f_{\mathcal{G}})\right\|_{L^\infty
(Q)}^p   \int_{Q} V(x)\left(\log\left(e +\f{V(x)}{V_{Q}}\right)\right)^{p/2}d\mu(x).
\end{eqnarray}
\end{corollary}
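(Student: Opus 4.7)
\smallskip

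\noindent\textbf{Proof proposal for Corollary~\ref{cor4.6}.}\
Set $A:=\|\mathscr{A}_L(f_{\mathcal{G}})\|_{L^\infty(Q)}$. If $A=0$, Theorem~\ref{prop4.5} forces $f_{\mathcal{G}}\equiv 0$ a.e.~on $Q$ and the inequality is trivial; so assume $A>0$. The plan is to combine the exponential-square estimate \eqref{e4.19} with a pointwise Young-type inequality of the form
\begin{equation*}
ab \;\leq\; C_p\, a\bigl(\log(e+a)\bigr)^{p/2} \;+\; C_p\bigl(\exp(c_p\, b^{2/p})-1\bigr), \qquad a,b\geq 0,
\end{equation*}
where $c_p,C_p>0$ depend only on $p$. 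For $p\leq 2$ this is precisely the duality relation between the complementary Young functions $\Phi_{p/2}(t)=t(\log(e+t))^{p/2}$ and its conjugate, which is comparable to $e^{c t^{2/p}}-1$; for $p>2$ one establishes the same pointwise bound by a short case analysis (small $a$ vs.~large $a$) since $s^p\lesssim 1 + e^{s^2}$.

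I would apply this pointwise inequality with the substitution $a=V(x)/V_Q$ and $b=c_0\,|f_{\mathcal{G}}(x)|^p/A^p$, choosing the constant $c_0=c_0(p)>0$ small enough that $c_p c_0^{2/p}\leq c_3$, where $c_3$ is the constant from Theorem~\ref{prop4.5}. This yields, pointwise on $Q$,
\begin{equation*}
\frac{V(x)}{V_Q}\cdot\frac{c_0\,|f_{\mathcal{G}}(x)|^p}{A^p} \;\leq\; C_p\,\frac{V(x)}{V_Q}\Bigl(\log\bigl(e+\tfrac{V(x)}{V_Q}\bigr)\Bigr)^{p/2} + C_p\Bigl(\exp\bigl(\tfrac{c_3\,|f_{\mathcal{G}}(x)|^2}{A^2}\bigr)-1\Bigr).
\end{equation*}
Multiplying through by $V_Q$ and integrating over $Q$, the first term on the right is exactly what we want, while the second term is handled by \eqref{e4.19}, giving
\begin{equation*}
\frac{c_0}{A^p}\int_Q V\,|f_{\mathcal{G}}|^p\,d\mu \;\leq\; C_p\int_Q V\bigl(\log(e+V/V_Q)\bigr)^{p/2}\,d\mu \;+\; C_p\,C_6\,V_Q\,\mu(Q).
\end{equation*}

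Finally, since $V_Q\,\mu(Q)=\int_Q V\,d\mu$ and $\log(e+V/V_Q)\geq 1$ pointwise, the residual term $V_Q\mu(Q)$ is absorbed into the main term, producing the claimed estimate \eqref{e4.29} with a constant depending only on $p$, $n$, and $\rho$. The one delicate point is the verification of the Young inequality in the range $p>2$, where the function $e^{t^{2/p}}-1$ fails to be convex near the origin; this is merely a technical nuisance and is circumvented either by replacing $\exp(c_p b^{2/p})-1$ by the complementary Young function of $\Phi_{p/2}$ (whose asymptotic growth is $e^{t^{2/p}}$) or by splitting the analysis according to whether $b$ is small or large and using $b^p\lesssim 1+e^{b^2}$ in the former range, neither of which affects the structure of the argument above.
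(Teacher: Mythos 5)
Your argument is correct and follows essentially the same route as the paper: both rest on the exponential-square bound \eqref{e4.19} combined with the duality between the Orlicz classes $L(\log L)^{p/2}$ and $\exp(L^{2/p})$ on $Q$. The only difference is that where the paper invokes Wilson's characterization (\cite[Theorem~11.2]{Wi08}) of $\int_Q V\left(\log\left(e+V/V_Q\right)\right)^{p/2}d\mu$ as a supremum over the exponential class, you prove the needed half of that duality directly via a pointwise Young inequality and absorb the residual term $V_Q\,\mu(Q)$ using $\log\left(e+V/V_Q\right)\geq 1$ --- a self-contained variant of the same idea.
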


\begin{proof}
From \eqref{e4.19}, we have that  $\displaystyle  \Bigg(\f{\sqrt{c_3}
|f_{\mathcal{G}}|}{ \|\mathscr A_L(f_{\mathcal{G}})
 \|_{L^\infty(Q)}}\Bigg)^{p}
\in {\rm{Exp}}_{C_6}(Q,2/p)$ for any $0<p<\infty$, {where $c_3$ and $C_6$ are   constants} in \eqref{e4.19} and
\[{\rm{Exp}}_{C_6}(Q,1/\gamma)=\left\{\psi:\    \f{1}{\mu(Q)} \int_Q
\exp\left(\psi^{1/\gamma}(x)\right)d\mu(x) \leq C_6+1 \right\}.\]

We follow  \cite[Theorem~11.2]{Wi08} to see that if $V_Q\neq 0$,
then  for all $\gamma>0$,
\begin{equation}
\label{e4.30}
\int_Q   V(x)\left(\log\left(C_6 +\f{V(x)}{V_Q}\right)\right)^{\gamma}d\mu(x)
 \sim \sup\left\{\int_Q V(x)\phi(x)d\mu(x):\  \  \phi\in
 {\rm{Exp}}_{C_6}(Q,1/\gamma)
  \right\},
\end{equation}
with comparability constants that only depend on $\gamma$.

By \eqref{e4.30} with $\gamma=2/p$,
\begin{eqnarray*}
\int_{Q} \Bigg(\f{\sqrt{c_3}
|f_{\mathcal{G}}|}{ \|\mathscr A_L(f_{\mathcal{G}})
 \|_{L^\infty(Q)}}\Bigg)^{p} V(x)d\mu(x) \leq C_p
\int_{Q} V(x)\left(\log\left(C_6 +\f{V(x)}{V_{Q}}\right)\right)^{p/2}d\mu(x).
\end{eqnarray*}
Meanwhile, there exists constant $C=C(C_6,p)$ independent of $V$ and $Q$, such that
\[\int_{Q} V(x)\left(\log\left(C_6 +\f{V(x)}{V_{Q}}\right)\right)^{p/2}d\mu(x)
 \leq C \int_{Q} V(x)\left(\log\left(e +\f{V(x)}{V_{Q}}\right)\right)^{p/2}d\mu(x). \]
Hence \eqref{e4.29} holds, and  we complete the proof of Corollary~\ref{cor4.6}.
\end{proof}

\medskip

We are now ready to prove   the following theorem.

 \begin{theorem}
\label{th4.7}
Suppose that
$L$ is a densely-defined operator on $L^2(X)$ satisfying
\textbf{(H1)}, \textbf{(H2)} and \textbf{(H3)}. Let $f\in L^2(X,d\mu)$ and assume $S_{L,\alpha} f \in L^\infty(X)$.
Then there exist two positive constants {$ c_5=c_5(n,\alpha)$, $ {C_9}=C_9(n,\alpha)$}
 independent of $f$ such that
\begin{eqnarray}\label{e4.31}
 \sup_{B}  \frac{1}{\mu(B)}   \int_{B} \exp\Bigg( {{c_5}}\f{ |f(x)-
f_{B} |^2}
{ \|S_{L,\alpha} f \|_{L^\infty(X)}^2}\Bigg) \, d\mu(x) \leq {C_9},
\end{eqnarray}
where the supremum  takes for all balls $B$.
\end{theorem}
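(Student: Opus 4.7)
The plan is to translate the discrete exponential-square estimate of Theorem~\ref{prop4.5} into a continuous statement on arbitrary balls, using the Calder\'on-style decomposition $f = \sum_{b=1}^{K} f_b$ from \eqref{e4.8}. By Lemma~\ref{le4.3} one has $\|\mathscr{A}_L g\|_{L^{\infty}(X)} \lesssim \|S_{L,\alpha} g\|_{L^{\infty}(X)}$, so the hypothesis $S_{L,\alpha} f \in L^{\infty}(X)$ automatically controls $\mathscr{A}_L f$. Fix a ball $B = B(x_0, r)$ and choose $k_B$ with $\delta^{k_B} < r \leq \delta^{k_B - 1}$. For each adjacent-system index $b \in \{1, \ldots, K\}$, let
\[
\mathcal{R}_b := \bigl\{ R \in \mathcal{D}^b_{k_B - 1} : R \cap B \neq \emptyset \bigr\};
\]
by the doubling condition $\#\mathcal{R}_b$ is uniformly bounded and each $R \in \mathcal{R}_b$ satisfies $\mu(R) \sim \mu(B)$.

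Next, I split each $f_b = f_b^{\mathrm{s}} + f_b^{\ell}$, where $f_b^{\mathrm{s}} = \sum_{R \in \mathcal{R}_b} f_b^{R}$ with $f_b^{R} = \sum_{P \in \mathscr{D},\, E_2(P) \subseteq R} a_P(f)$, and $f_b^{\ell}$ collects the terms with $E_2(P) \in \mathcal{D}^b$ strictly containing some $R \in \mathcal{R}_b$; cubes with $E_2(P) \cap B = \emptyset$ vanish on $B$. For each pair $(b, R)$, a truncation/monotone-convergence argument reduces to the finite setting of Theorem~\ref{prop4.5}, which applied to $f_b^R$ on $R$ yields local exponential-square integrability with constant governed by $\|\mathscr{A}_L f_b^R\|_{L^{\infty}(R)} \leq \|\mathscr{A}_L f\|_{L^{\infty}(X)} \lesssim \|S_{L,\alpha} f\|_{L^{\infty}(X)}$. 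For the large-scale piece, for each fixed $x \in B$ the nonzero contributions to $f_b^{\ell}(x)$ form a single chain of ancestors in $\mathcal{D}^b$ at levels $k_B - 1 - m$, $m \geq 1$; invoking Lemma~\ref{le3.1}(iii) and the fact that the corresponding $P_m \in \mathscr{D}_{k_B - m}$ has $(d(x,y)/\delta^{k_B - m})^{\theta} \lesssim \delta^{m \theta}$ for $x, y \in B$, combined with Cauchy--Schwarz and $\sum_{m} |\lambda_{P_m}|^2 / \mu(P_m) \leq \mathscr{A}_L^2 f(z) \leq \|\mathscr{A}_L f\|_{L^{\infty}}^2$, gives
\[
\bigl| f_b^{\ell}(x) - f_b^{\ell}(y) \bigr| \leq C\, \|S_{L,\alpha} f\|_{L^{\infty}(X)} \qquad \text{for all } x, y \in B,
\]
and in particular $|f_b^{\ell}(x) - (f_b^{\ell})_B| \lesssim \|S_{L,\alpha} f\|_{L^{\infty}(X)}$.

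To conclude, writing $|f - f_B| \leq \sum_b \bigl(|f_b^{\mathrm{s}}| + |(f_b^{\mathrm{s}})_B|\bigr) + \sum_b |f_b^{\ell} - (f_b^{\ell})_B|$ and using that Cauchy--Schwarz applied to the $L^2$-consequence of the exponential-square bound yields $|(f_b^{\mathrm{s}})_B| \lesssim \|S_{L,\alpha} f\|_{L^{\infty}(X)}$, I arrive at
\[
|f(x) - f_B|^2 \leq C \sum_{b=1}^K \sum_{R \in \mathcal{R}_b} |f_b^R(x)|^2 \chi_R(x) + C' \|S_{L,\alpha} f\|_{L^{\infty}(X)}^2,
\]
the first term using that the cubes $\{R\}_{R \in \mathcal{R}_b}$ are pairwise disjoint for fixed $b$. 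Exponentiating with a sufficiently small constant $c_5$, writing the exponential of the sum as a product over the bounded number $M := \sum_b \#\mathcal{R}_b$ of factors, and applying H\"older's inequality to distribute the integration over $B$, reduces matters to $M$ separate applications of the exponential-square bound of Theorem~\ref{prop4.5} on the individual $R$'s, each contributing $C \mu(R) \lesssim \mu(B)$. The main technical obstacle is that---unlike in $\mathbb{R}^n$---for an arbitrary ball $B$ no single adjacent system $\mathcal{D}^b$ need contain a cube of comparable size to $B$; the remedy is to work simultaneously across all $K$ systems using the finite collections $\mathcal{R}_b$, whose uniform multiplicity bound (supplied by the doubling constant) keeps the product of exponentials of bounded type independently of $B$ and $f$.
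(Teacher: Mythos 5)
Your proposal is correct and is essentially the paper's own argument: you reduce to the discrete square function via Lemma~\ref{le4.3}, split each $f_b$ over a ball $B$ into small-scale pieces supported in the boundedly many cubes of $\mathcal{D}^b$ of generation comparable to $B$ (to which Theorem~\ref{prop4.5} is applied) plus a coarse-scale part whose oscillation on $B$ is controlled by the H\"older estimate of Lemma~\ref{le3.1}(iii) together with Cauchy--Schwarz and $\sum |\lambda_P|^2/\mu(P)\le \|\mathscr{A}_L f\|_\infty^2$, and then combine via convexity/H\"older of the exponential, exactly as in the paper's treatment of $F_{Q,1}$ and $F_{Q,2}$. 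The one step to tighten is the limiting argument: since the Calder\'on expansion converges only in $L^2$, the decomposition should be performed on the truncations $F_{b,m}$ and the limit taken once at the end via an a.e.\ convergent subsequence and Fatou's lemma (as the paper does), rather than by ``monotone convergence.''
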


\begin{proof}
To prove Theorem~\ref{th4.7}, {one can apply Lemma~\ref{le4.3} and fix the parameter $\rho=\delta^2 \alpha /5$ to obtain that  it suffices to show}
\begin{eqnarray}\label{e4.311}
 \sup_{B}  \frac{1}{\mu(B)}  \int_{B} \exp\Bigg( {c^2{c_5}}\f{ |f(x)-
f_{B} |^2}
{ \|\mathscr A_L(f) \|_{L^\infty(X)}^2}\Bigg) \, d\mu(x) \leq {C_9},
\end{eqnarray}
where $c$ is the constant in \eqref{e4.9}.

Let us show \eqref{e4.311}.
Since $f\in L^2(X)$, the Calder\'on reproducing formula \eqref{e3.1} tells us
$$
f
=\lim_{m\to \infty}\sum_{b=1}^K \sum_{Q\in \mathscr D,E_2(Q)\in \bigcup_{-m\leq s\leq m}\mathcal{D}_s^{b}} a_Q(f)(x)
=\lim_{m\to \infty}\sum_{b=1}^K F_{b,m}=\lim_{m\to \infty} F_m,
$$
where the limits converge in $L^2(X)$.
Then one can apply  the Riesz theorem
to see there exists some subsequence $\{m_i\}$ such that
$ \underset{i\to \infty}{\lim}F_{m_i}(x)=f(x)$, a.e. $x\in X$.
%
Also for every ball $B$,
\[\lim_{m\to \infty} \left|f_B- \left(F_{m}\right)_B\right|\leq \lim_{m\to \infty}
\Big(\frac{1}{\mu(B)} \int_B \left|f(x)-F_{m}(x)\right|^2 d\mu(x)\Big)^{1/2} =0.\]
It follows from the fact that $\mathscr{A}_L (F_{b,m})(x)\leq \mathscr{A}_L (F_m)(x)
\leq \mathscr{A}_L (f)(x)$,  a.e. $x\in X$,
\begin{eqnarray*}
\f{|f(x)-f_B|^2}{\|\mathscr A_L(f)\|_{L^\infty(X)}^2}
&\leq&3\f{|f(x)-F_{m_i}(x)|^2}{\|\mathscr A_L(f)\|_{L^\infty(X)}^2}+3\f{|F_{m_i}(x)-(F_{m_i})_B|^2}
{\|\mathscr{A}_L (F_{m_i})\|_{L^\infty(X)}^2}+3\f{|(F_{m_i})_B-f_B|^2}{\|\mathscr A_L(f)\|_{L^\infty(X)}^2},
\end{eqnarray*}
which implies
\begin{eqnarray*}
\f{|f(x)-f_B|^2}{\left\|\mathscr A_L(f)\right\|_{L^\infty(X)}^2}
&\leq& 3\underset{i\to \infty}{\underline{\lim}}\f{|F_{m_i}(x)-(F_{m_i})_B|^2}
{\left\|\mathscr{A}_L (F_{m_i})\right\|_{L^\infty(X)}^2}.
\end{eqnarray*}
From Fatou's lemma,
\begin{eqnarray}
\label{e4.32}
 \int_{B} \exp\Bigg( {c^2 {c_5}}\f{|f(x)-
f_{B}|^2}
{\|\mathscr A_L(f)\|_{L^\infty(X)}^2}\Bigg)d\mu(x)
 &\leq & C\underset{i\to \infty}{\underline{\lim}}     \int_{B}  \exp\Bigg( {3c^2 { c_5}}\f{|F_{m_i}(x)-
(F_{m_i})_{B}|^2}
{\|\mathscr A_L(F_{m_i})\|_{L^\infty(X)}^2}\Bigg)\,  d\mu(x) \notag\\
&\leq &   \underset{i\to \infty}{\underline{\lim}} \frac{C}{K}
\sum_{b=1}^K    \int_{B}  \exp\Bigg( {3K^2 c^2 {c_5}  }\f{|F_{b,m_i}(x)-
(F_{b,m_i})_{B}|^2}
{\|\mathscr A_L(F_{b,m_i})\|_{L^\infty(X)}^2}\Bigg)\, d\mu(x).
\end{eqnarray}
Therefore, it suffices to show for every ball $B$ and $F_{b,m}$,
$b\in \{1,2,\cdots, K\}$, $m>0$, there exist two positive constants {$ c_6$ and $C_{10}$}
 independent of  $F_{b,m}$ and $B$, such that
\begin{equation}
\label{e4.33}
\frac{1}{\mu(B)}   \int_{B} \exp\Bigg( {{c_6}}\f{|F_{b,m}(x)-
(F_{b,m})_{B}|^2}
{\|\mathscr A_L(F_{b,m})\|_{L^\infty(X)}^2}\Bigg)\, d\mu(x) \leq {C_{10}}.
\end{equation}

Let us prove \eqref{e4.33}. Assume $\mbox{diam}\,B\sim \delta^{k_0}$.
Define $\mathcal{G}_{b,m}=\left\{Q\in \mathcal{D}_{k_0}^b:\  \ Q\cap B\neq \emptyset \right\}$.
Obviously,  $\#\mathcal{G}_{b,m}\leq M_1$ for some  $M_1$  depending
 on $n$ and $\delta$ only.
Then {for $ x\in B$}, by $\support a_P \subset E_2(P)$, we write
\begin{eqnarray*}
F_{b,m}(x)&=&\sum_{Q\in \mathcal{G}_{b,m}} \sum_{\substack{E_2(P)\subset Q\\E_2(P)\in
\bigcup_{-m\leq s\leq m}\mathcal{D}_s^{b}}} a_P(f)(x)+
\sum_{Q\in \mathcal{G}_{b,m}} \sum_{\substack{ Q\subsetneqq E_2(P)\\E_2(P)\in
 \bigcup_{-m\leq s\leq m}\mathcal{D}_s^{b}}} a_P(f)(x)\\
&=:&\sum_{Q\in \mathcal{G}_{b,m}} F_{Q,1}(x)+\sum_{Q\in \mathcal{G}_{b,m}}F_{Q,2}(x).
\end{eqnarray*}
 Note that $\mathscr{A}_L (F_{Q,1})(x)\leq \mathscr{A}_L (F_{b,m})(x)$ and
  $\mathscr{A}_L (F_{Q,2})(x)\leq \mathscr{A}_L (F_{b,m})(x)$. By the convexity of the exponential function,
\begin{eqnarray*}
{\rm LHS\ of } \ \eqref{e4.33}
&\leq&
\frac{1}{2M_1} \sum_{Q\in \mathcal{G}_{b,m}} \sum_{j=1}^2 \frac{1}{\mu(B)} \int_{B}
\exp\Bigg( 4M^2_1 {{c_6}}\f{|F_{Q,j}(x)-
(F_{Q,j})_{B}|^2}
{\|\mathscr A_L(F_{Q,j})\|_{L^\infty(X)}^2}\Bigg)\, d\mu(x).
\end{eqnarray*}
It reduces  to show for every ball $B$ and $F_{Q,j}$,$j=1,2$,
there exist two positive constants {$ c_7$ and $ C_{11}$}
 independent of  $F_{Q,j}$ and $B$, such that
\begin{equation}
\label{e4.333}
\frac{1}{\mu(B)} \int_{B} \exp\Bigg( { {c_7}}\f{|F_{Q,j}(x)-
(F_{Q,j})_{B}|^2}
{\|\mathscr A_L(F_{Q,j})\|_{L^\infty(X)}^2}\Bigg) \leq {C_{11}}.
\end{equation}

Consider  the case $j=1$. \
We note that $\support F_{Q,1}\subset Q$. A similar approach to derive \eqref{e4.13} gives
\[\left|F_{Q,1}\right|_B^2 \leq \frac{C}{\mu(Q)}\int_{Q} |F_{Q,1}(x)|^2 d\mu(x)
 \leq CC^2_1\|\mathscr{A}_L(F_{Q,1} )\|_{L^\infty(Q)}^2, \]
where $C_1$ is the  constant in  \eqref{e4.13}. Then again note that $\support F_{Q,1}\subset Q$,
\begin{eqnarray*}
 &&\hspace{-1.2cm}\frac{1}{\mu(B)} \int_{B} \exp\Bigg( { c_7}\f{|F_{Q,1}(x)-
(F_{Q,1})_{B}|^2}
{\|\mathscr A_L(F_{Q,1})\|_{L^\infty(X)}^2}\Bigg)\, d\mu(x)
 \\
&\leq &  \frac{C\exp(CC_1^2)}{\mu(Q)}  \sum_{Q'\in \mathcal{G}_{b,m}}  \int_{Q'}
\exp\Bigg( {2 {c_7}}\f{|F_{Q,1}(x)|^2}
{\|\mathscr A_L(F_{Q,1})\|_{L^\infty(Q)}^2}\Bigg)\, d\mu(x)\\
&\leq&   \frac{C\exp(CC_1^2)}{\mu(Q)}   \int_{Q}
\exp\Bigg( {2  c_7}\f{|F_{Q,1}(x)|^2}
{\|\mathscr A_L(F_{Q,1})\|_{L^\infty(Q)}^2}\Bigg)\, d\mu(x)+(M_1-1)C\exp(CC_1^2).
\end{eqnarray*}
This, together with Theorem~\ref{prop4.5}, deduces that there exists a constant $C_{10}$
 independent on $F_{Q,1}$ and $B$, such that
\[\frac{1}{\mu(B)} \int_{B} \exp\Bigg( { c_7}\f{|F_{Q,1}(x)-
(F_{Q,1})_{B}|^2}
{\|\mathscr A_L(F_{Q,1})\|_{L^\infty(X)}^2}\Bigg)\, d\mu(x) \leq C_6M_1C\exp(CC_1^2)
\leq C_{11}\]
for  $ c_7\leq c_3/2$, where $c_3$ is the constant in \eqref{e4.19}.
This proves \eqref{e4.333} for the case $j=1$.
\smallskip

Next we work the case $j=2$. Note that $F_{Q,2}$ is continuous
on $B$, so there exists $x'\in B$ such that $(F_{Q,2})_B=F_{Q,2}(x')$.
A similar argument as in the proof of \eqref{e4.15} shows that
\[|F_{Q,2}(x)-(F_{Q,2})_B|=|F_{Q,2}(x)-F_{Q,2}(x')|
 \leq C_3\|\mathscr{A}_L(F_{Q,2})\|_{L^\infty(Q)}, \ \  x\in B,\]
where $C_3$ is the constant in \eqref{e4.15}. Then
\[\frac{1}{\mu(B)} \int_{B} \exp\Bigg( { c_7}\f{|F_{Q,2}(x)-
(F_{Q,2})_{B}|^2}
{\|\mathscr A_L(F_{Q,2})\|_{L^\infty(X)}^2}\Bigg)\, d\mu(x) \leq \exp( \gamma_3 C_3^2)
\leq C_{11},\]
which proves \eqref{e4.333} for the case $j=2$ and finishes the proof of Theorem~\ref{th4.7}. 
 \end{proof}

\medskip
\noindent
{\bf Remarks.}
We would like to comment on the possibility of several generalizations and
open problems related to the results of Theorem~\ref{th4.7}.


\smallskip

(1) The first one is the extension of   the work of
Chang-Wilson-Wolff (\cite[Theorem 3.2]{CWW}) to non-homogeneous metric spaces,
in place of ${\mathbb R^n}$ (see \cite{T, V}
 for the definition and properties of non-homogeneous metric spaces),  and this question will be considered in the future.

\smallskip

(2)
 In the  proof of  Theorem~\ref{th4.7},
a key step is to use Theorem~\ref{prop4.5}, which provides an algorithm to obtain a direct proof of
its  exponential-square integrability of   a function whose discrete square function
associated to an operator $L$ is bounded. In the proof, we  used the assumption  \textbf{(H3)}
of H\"older's continuity in $x$ of
the kernels $p_t(x,y)$ of $e^{-tL}$.  We may ask whether  Theorem~\ref{th4.7} still holds
 assuming merely  that an operator $L$ satisfies  ${\bf (H1)}$ and ${\bf (H2)}$.
 This problem continues a line of the study of harmonic analysis properties  of abstract selfadjoint
operators   whose kernel   satisfies weaker conditions than standard regularity
estimates,  and there was lots of success in solving this problem in the last few years
see for example, in  \cite{Au07, ACDH, AM, BFP, DM, DOS, DY1, DY, HLMMY} and the
references therein.

\smallskip

\bigskip

\section{Singular integral operators}
\setcounter{equation}{0}

In this section, we apply the previous results in Section  4 to obtain
some   estimates of the norms on $L^p$ as $p$ becomes large for operators such as
 the square functions or spectral
 multipliers,  and weighted norm inequalities for the square functions.

 \subsection{Square function results} In this subsection
we establish sharp lower bounds
 for the operator norm  of  the square functions  on $L^p(X,d\mu)$ as $p$ becomes large.
\begin{proposition}
\label{th5.1}
Let $(X,d,\mu)$ be a metric measure space endowed with a distance $d$ and a
nonnegative Borel
doubling measure $\mu$ on $X$.  Assume that $L$ is a
densely-defined operator
on $L^2(X)$
satisfying \textbf{(H1)}, \textbf{(H2)} and \textbf{(H3)}. { Then there exists a positive constant $C=C(n,\rho)$
  such that}
\begin{equation}
\label{e5.1}
\|f\|_{L^p(X,d\mu)} \leq C p^{1/2}\  \|{\mathscr{A}_L}(f)\|_{L^p(X,d\mu)} \  \
\text{as} \ \   p\to \infty.
\end{equation}
\end{proposition}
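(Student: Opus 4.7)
The plan is to deduce Proposition~\ref{th5.1} from the local exponential-square estimate in Theorem~\ref{prop4.5} via a good-$\lambda$ inequality relating $f$ and $\mathscr{A}_L f$, then convert it to an $L^p$ bound and optimize a free parameter to extract the sharp $p^{1/2}$ dependence.

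First I would reduce to finite sums. By \eqref{e4.8}, $f=\sum_{b=1}^{K} f_b$ with $f_b$ supported on the adjacent dyadic system $\mathcal{D}^{b}$; since $K=K(n)$ is universal, it suffices to prove the estimate for each $f_b$ separately. Truncating to atoms indexed by $|k|\leq m$ produces $F\in L^2\cap L^p$ whose $\mathscr{A}_L F$ is a finite sum, and Fatou's lemma (applied exactly as in the proof of Theorem~\ref{th4.7}) recovers $f$ when $m\to\infty$.

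The central step is to establish the good-$\lambda$ inequality
\[
\mu\bigl\{x:\ |F(x)|>2\lambda,\ \mathscr{A}_L F(x)\leq \gamma\lambda\bigr\}\leq C_0\,e^{-c_0/\gamma^2}\,\mu\{|F|>\lambda\}
\]
for all $\lambda>0$ and $\gamma\in(0,1)$. To prove it, I would perform a Calderón--Zygmund decomposition of $\{|F|>\lambda\}$ inside $\mathcal{D}^{b}$, obtaining maximal cubes $\{Q_j\}$ with $|F|_{Q_j}\sim\lambda$; any $Q_j$ disjoint from $\{\mathscr{A}_L F\leq \gamma\lambda\}$ is absorbed into the trivial level set $\{\mathscr{A}_L F>\gamma\lambda\}$ on the right. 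For the remaining cubes, split $F=F_j^{\mathrm{in}}+F_j^{\mathrm{out}}$ with $F_j^{\mathrm{in}}=\sum_{E_2(P)\subseteq Q_j}a_P(f)$. The Hölder estimate (iii) of Lemma~\ref{le3.1}, combined with a chain argument in the spirit of \eqref{e4.15}, shows that $F_j^{\mathrm{out}}$ oscillates on $Q_j$ by at most $C\|\mathscr{A}_L F\|_{L^{\infty}(Q_j)}\leq C\gamma\lambda$. Since also $\|\mathscr{A}_L F_j^{\mathrm{in}}\|_{L^{\infty}(Q_j)}\leq C\gamma\lambda$, applying Theorem~\ref{prop4.5} to $F_j^{\mathrm{in}}$ on $Q_j$ yields $\mu\{x\in Q_j:|F_j^{\mathrm{in}}(x)|>\lambda/2\}\leq C\exp(-c/\gamma^2)\mu(Q_j)$. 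Summing over $j$ produces the good-$\lambda$ bound. Multiplying by $p\lambda^{p-1}$ and integrating then gives
\[
2^{-p}\|F\|_p^p\leq C_0\,e^{-c_0/\gamma^2}\|F\|_p^p+\gamma^{-p}\|\mathscr{A}_L F\|_p^p,
\]
and choosing $\gamma=\kappa/\sqrt{p}$ with $\kappa$ small enough to enforce $C_0\,2^p e^{-c_0/\gamma^2}\leq 1/2$ lets me absorb the first term, giving $\|F\|_p\leq Cp^{1/2}\|\mathscr{A}_L F\|_p$; sending $m\to\infty$ concludes.

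The main obstacle is the localization inside the good-$\lambda$ step: one must control the ``exterior'' contribution $F_j^{\mathrm{out}}$ on the CZ cube $Q_j$ by $\gamma\lambda$ rather than $\lambda$, unwinding the combinatorics of cubes in $\mathcal{D}^{b}$ that strictly contain $Q_j$ and the way their atoms $a_P(f)$ contribute to $\mathscr{A}_L F$ at a reference point chosen inside $\{\mathscr{A}_L F\leq \gamma\lambda\}\cap Q_j$. This is the step that, through the chain estimate already perfected in Theorem~\ref{prop4.5}, produces Gaussian rather than merely exponential decay in $1/\gamma$, and this sharpness is what ultimately translates into the sharp $p^{1/2}$ growth of the $L^p$ constant.
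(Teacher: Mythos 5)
Your route (a subgaussian good-$\lambda$ inequality between $F$ and $\mathscr{A}_LF$, then $\gamma\sim p^{-1/2}$) is genuinely different from the paper's proof, which argues by duality: Corollary~\ref{cor4.6} with $p=1$, a Rubio de Francia--Fefferman--Pipher $A_1$ majorant $\widetilde V$ of the dual weight, a stopping decomposition of the cubes by the level sets of $\mathscr{A}_L(F_{b,m})$, and the sharp bound $\|M\|_{L^{p'}}\lesssim (p'-1)^{-1/p'}$, which is where the $p^{1/2}$ comes from. Your approach would be attractive if it worked, but as written the good-$\lambda$ inequality is not established, for two reasons. First, the claim $\|\mathscr{A}_L F_j^{\mathrm{in}}\|_{L^\infty(Q_j)}\le C\gamma\lambda$ is unjustified and false in general: the hypothesis only gives points of $Q_j$ lying in $\{\mathscr{A}_LF\le\gamma\lambda\}$, while the square function of the inner part may be huge elsewhere on $Q_j$, and Theorem~\ref{prop4.5} requires an $L^\infty$ bound on the whole cube. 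This one is repairable by a stopping-time modification (keep only those $E_2(P)\subseteq Q_j$ whose ancestor sum inside $Q_j$ is at most $(\gamma\lambda)^2$; the resulting $g_j$ satisfies $\mathscr{A}_Lg_j\le\gamma\lambda$ everywhere and agrees with $F_j^{\mathrm{in}}$ on the good set, so Theorem~\ref{prop4.5} applies to it).

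The substantive gap is the exterior part. You control only the \emph{oscillation} of $F_j^{\mathrm{out}}$ on $Q_j$ by $C\gamma\lambda$ (which is correct, via Lemma~\ref{le3.1}(iii) and the chain argument of \eqref{e4.15} anchored at a point of the good set), but to deduce $|F_j^{\mathrm{in}}|>\lambda/2$ from $|F|>2\lambda$ you also need the \emph{size} of $F_j^{\mathrm{out}}$ on $Q_j$ to be $\lesssim\lambda$. In the dyadic martingale model this is free: the exterior part is the constant martingale value at the parent cube, bounded by $\lambda$ by maximality. Here the atoms $a_P(f)$ carry no cancellation (the paper explicitly allows $e^{-tL}1\neq 1$), and relating $(F_j^{\mathrm{out}})_{Q_j}$ to $|F|_{\widetilde{Q}_j}\le\lambda$ leaves the error term $\bigl|\bigl(\sum_{E_2(P)\subsetneqq \widetilde{Q}_j}a_P(f)\bigr)_{\widetilde{Q}_j}\bigr|$, which by the mechanism of \eqref{e4.14} and \eqref{e4.16} can only be bounded by $C_1$ times the $L^\infty$ norm of the square function of that inner part on $\widetilde{Q}_j$ --- a quantity controlled neither by $\gamma\lambda$ nor by $\lambda$ under your hypotheses (the good-$\lambda$ assumption says nothing about $\mathscr{A}_LF$ off the good set). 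Without this, the inclusion of the good set in $\{|F_j^{\mathrm{in}}|>\lambda/2\}$ fails and the Gaussian factor $e^{-c/\gamma^2}$ is not obtained; this lack of cancellation is exactly why the paper's own Proposition~\ref{le4.4} pays the global norm $\|\mathscr{A}_L(f_{\mathcal G})\|_{L^\infty(Q)}$ at the analogous step, and, plausibly, why the authors take the weighted/duality route. Two smaller points: truncating $f\in L^2$ to finitely many generations does not obviously give $F\in L^p$ for $p>2$ (the paper first replaces $f$ by $f\chi_{E_k}$), and the absorption step $C_02^pe^{-c_0/\gamma^2}\|F\|_p^p\le\tfrac12\|F\|_p^p$ needs this a priori finiteness.
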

\begin{proof}  

The proof of \eqref{e5.1} is  based on combing a technique of \cite[Theorem~11.3]{Wi08}
and the
Fefferman-Pipher's method \cite{FP97}.  Notice that
\[ \|f\|_{L^p(X,d\mu)}=\sup_{\|V\|_{ L^{p'}(X,d\mu)} =1} \int_X
f(x)V(x)d\mu(x),\ \  \    \  1/p+1/{p'}=1,\]
it suffices to show  that there exists a constant $C>0$ such that
\[\left|\int_X f(x)V(x)d\mu(x) \right|  \leq  Cp^{1/2}
\|{\mathscr{A}_L}(f)\|_{L^p(X,d\mu)}, \ \   \|V\|_{ L^{p'}(X,d\mu)} =1 \]
as $p\to \infty$.

Next we decompose $f$ and $V$ to $f_k$ and $V_k$ such that $f_k,V_k\in L^2(X)$ and so
Calder\'on reproducing formula can be applied.
Fix a sequence of sets $E_1\subset E_2\subset \cdots \subset E_k\subset E_{k+1}\subset \cdots
\subset X$ satisfying $\mu(E_k)<\infty$ for every $k\geq 1$ and $X=\bigcup_k
E_k$. For every $f\in L^p(X,d\mu)$, $2\leq p<\infty$, let
$f_k=f\chi_{E_k}$, $k=1,2,
\cdots$. It's clear that $f_k\in L^2(X,d\mu)\cap L^p(X,d\mu)$ and
$\displaystyle \lim_{k\to \infty}\|f-f_k\|_{
L^p(X,d\mu)}=0$.  Notice  that $L_0^\infty(X)$ is densely contained in $L^q(X)$ for
every $1<q<\infty$, where  $L_0^\infty(X)$ denotes the space of bounded
functions with bounded support. Therefore for  every $V\in L^{p'}(X,d\mu)$ with
$\|V\|_{L^{p'}(X,d\mu)}=1$, there exists  $\{V_k\}_{k=1}^\infty\subset
L_0^\infty(X,d\mu)$ such that $\displaystyle \lim_{k\to \infty}\|V-V_k
\|_{L^{p'}(X,d\mu)}=0$. In the sequel, we may assume
$\|V_k\|_{L^{p'}(X,d\mu)}\leq 2$ for all $k\geq 1$.

For $f\in L^p(X)$ and $V\in L^{p'}(X)$, note that $f_k,V_k\in L^2(X,d\mu)$,
one can apply  Calder\'on reproducing formula \eqref{e3.1}
to deduce
\begin{eqnarray}
\label{e5.2}
\int_X f(x)V(x)d\mu(x) &=&\lim_{k\to \infty} \int_X f_k(x)V_k(x)d\mu(x) \notag \\
&=& \lim_{k\to \infty} \lim_{m\to \infty}   \sum_{b=1}^K  \int_X
F_{b,m}(x) V_k(x)d\mu(x),
\end{eqnarray}
where
\[F_{b,m}(x):=\sum_{\substack{ Q\in \mathscr{D} \\
E_2(Q)\in \bigcup_{-m\leq s\leq m}\mathcal{D}_s^{b}}} a_Q(f_k)(x).\]

For $F_{b,m}$ and $V_k$ defined above,  it turns to estimate $\int
|F_{b,m}(x)| |V_k(x)|d\mu(x)$. To this end, we apply  the technique in
\cite[Theorem~11.3]{Wi08}. For every given $b\in \{1,2,\cdots,K\}$ and $m\geq 1$,
set $E_{b,m}^j=\left\{x\in X:\   \mathscr A_L\left(F_{b,m}\right)(x)
>2^j\right\}$ and
\[D_{b,m}^{j}=\left\{E_2(Q):\ \ Q\in \mathscr{D},  E_2(Q)\in \bigcup_{-m\leq s
\leq m}\mathcal{D}_s^{b} , \  \  E_2(Q)\subset E_{b,m}^j,\  E_2(Q)\not\subset
E_{b,m}^{j+1}\right\}, \  \   j\in \mathbb{Z}.\]
Note that function $\mathscr A_L
 \left(F_{b,m}\right)(x)$ just takes finite values on each dyadic cube and so
 can take minimal value on each dyadic cube.
It is easy to see that
\[D_{b,m}^{j}=\left\{E_2(Q):\ \ Q\in \mathscr{D}, E_2(Q)\in \bigcup_{-m\leq s
\leq m}\mathcal{D}_s^{b},   \displaystyle 2^j< \min_{x\in E_2(Q)}\mathscr A_L
 \left(F_{b,m}\right)(x)
\leq 2^{j+1} \right\}, \  \   j\in \mathbb{Z}.\]
Sets $D_{b,m}^{j}$ for different $j$
are disjoint
(some of them might be empty) and
$$
\left\{E_2(Q):\ \ Q\in \mathscr{D}, E_2(Q)\in \bigcup_{-m\leq s
\leq m}\mathcal{D}_s^{b}\right\}=\bigcup_j D_{b,m}^{j}.
$$

Set
\[F_{b,m}^j(x) =\sum_{E_2(Q)\in D_{b,m}^j} a_Q(f_k)(x),\ \  \  j\in \mathbb{Z}.\]
Then $F_{b,m}= \sum_{j} F_{b,m}^j$.

\smallskip

We claim  that $\mathscr A_L(F_{b,m}^j)\leq 2^{j+1}$ everywhere. Indeed,
 if  $\mathscr A_L(F_{b,m}^j)(x_0) > 2^{j+1}$  for
some $x_0\in X$, then there exists $E_2(Q)\in D_{b,m}^j$ such that $x_0
\in E_2(Q)$;  otherwise $F_{b,m}^j(x_0)=0$.
Let $E_2(Q_0)$ be the minimal cube in $\left\{E_2(Q)\in D_{b,m}^j, \
x_0\in E_2(Q)\right\}$, it's clear that $\mathscr A_L(F_{b,m}^j
)(x)=\mathscr A_L (F_{b,m}^j
 )(x_0)>2^{j+1}$ for all  $x\in E_2(Q_0)$. Therefore, $E_2(Q_0)\subset
E_{b,m}^{j+1}$ follows from  $\mathscr A_L\left(F_{b,m}\right)\geq
\mathscr A_L (F_{b,m}^j )$ ,  which is   in conflict with
$E_2(Q_0)\in D_{b,m}^j$.

Set
\[ \mathcal{B}_{b,m}^j=\left\{Q_{b,m}^{j}: \ \ Q_{b,m}^{j} \ \mbox{is the maximal
dyadic cube in}\ D_{b,m}^j\right\}, \]
 It's clear that
any two cubes in $\mathcal{B}_{b,m}^j$ are disjoint.
Besides it is easy to check
 $$
 {\rm{supp}}\ F_{b,m}^j\subset \bigcup_{Q_{b,m}^{j}\in \mathcal{B}_{b,
 m}^j}Q_{b,m}^{j} \subset  E_{b,m}^j.
 $$

\smallskip

Let $M$ be the classical Hardy-Littlewood
maximal operator.
Following R. Fefferman-Pipher's method \cite{FP97}, set
\[\widetilde{V}_k=|V_k|+\f{M(V_k)}{2\|M\|_{L^{p'}(X,d\mu)}}+\f{M\circ M(V_k)}
{(2\|M\|_{L^{p'}(X,d\mu)})^2}+\cdots,\]
where $\|M\|_{L^{p'}(X,d\mu)}$ denotes the operator norm of the Hardy-Littlewood
maximal operator on $L^{p'}(X,d\mu)$. Then $\|\widetilde{V}_k\|_{L^{p'}(X,d\mu)}
\leq 4$ and $\|\widetilde{V}_k\|_{A_1}\leq 2\|M\|_{L^{p'}(X,d\mu)}$, where $A_1$
is the classical Muckenhoupt weight class.

\smallskip

Without  loss of generality, we  assume that $(\widetilde{V}_k)_{Q_{b,m}^{j}}
\neq 0$
for all $Q_{b,m}^{j}\in \mathcal{B}_{b,m}^j$. Note that $\|
\mathscr A_L(F_{b,m}^j)\|_{L^\infty}\leq 2^{j+1}$.
Then one can use
Corollary~\ref{cor4.6} with $p=1$ to see
\begin{eqnarray}\label{e5.3}
\int_X  |F_{b,m}(x)|\   |V_k(x)|d\mu(x) &\leq &
  \sum_{j\in \mathbb{Z}}   \sum_{Q_{b,m}^{j}\in \mathcal{B}_{b,m}^j}
\int_{Q_{b,m}^{j}}  |F_{b,m}^j (x)|\ \widetilde{V}_k(x)
d\mu(x)\nonumber\\
&\leq & C\sum_{j\in \mathbb{Z}}  2^j  \sum_{Q_{b,m}^{j}\in \mathcal{B}_{b,m}^j}
\int_{Q_{b,m}^{j}} \widetilde{V}_k(x) \Bigg(\log
\Bigg(e+\f{\widetilde{V}_k(x)}{(\widetilde{V}_k)_{Q_{b,m}^{j}}}\Bigg)\Bigg)^{1/2}d\mu(x),
\end{eqnarray}

By a similar approach to prove (2.15) in \cite[pp. 17]{Wi08}, we can prove there
exists constant $C>0$ such that
for any dyadic cube $Q$ and function $f$ defined on $Q$,
$$
\int_{Q} |f(x)| \log\Bigg(e+\f{|f(x)|}
{|f|_{Q}}\Bigg)d\mu(x)
\leq C\int_{Q} M\left(f \chi_{Q}
\right) d\mu(x).
$$

By H\"older's inequality,
\begin{eqnarray}\label{e5.4}
& &\int_{Q_{b,m}^{j}} \widetilde{V}_k(x) \Bigg(\log \Bigg(e+\f{
\widetilde{V}_k(x)}{(\widetilde{V}_k)_{Q_{b,m}^{j}}}\Bigg)\Bigg)^{1/2}\, d\mu(x)\nonumber\\
&\leq& \Bigg(\int_{Q_{b,m}^{j}} \widetilde{V}_k(x) d\mu(x)\Bigg)^{1/2}
\Bigg(\int_{Q_{b,m}^{j}} \widetilde{V}_k(x) \log\Bigg(e+\f{\widetilde{V}_k(x)}
{(\widetilde{V}_k)_{Q_{b,m}^{j}}}\Bigg)d\mu(x)\Bigg)^{1/2}\nonumber\\
&\leq & C  \Bigg(\int_{Q_{b,m}^{j}} \widetilde{V}_k(x) d\mu(x)\Bigg)^{1/2}
\Bigg(\int_{Q_{b,m}^{j}} M(\widetilde{V}_k\, \chi_{Q_{b,m}^{j}}
) d\mu(x)\Bigg)^{1/2}\nonumber\\
&\leq & C \|\widetilde{V}_k\|_{A_1}^{1/2} \int_{Q_{b,m}^{j}} \widetilde{V}_k(x)
d\mu(x)\nonumber\\
&\leq & C  \|M\|_{L^{p'}(X,d\mu)}^{1/2} \int_{Q_{b,m}^{j}} \widetilde{V}_k(x)
d\mu(x).
\end{eqnarray}

Therefore,
\begin{eqnarray}\label{e5.5}
& &\int_X  |F_{b,m}(x)|\   |V_k(x)|d\mu(x) \nonumber\\
&\leq & C \|M\|_{L^{p'}(X,d\mu)}^{1/2} \sum_{j\in \mathbb{Z}} 2^j
\sum_{Q_{b,m}^{j}\in \mathcal{B}_{b,m}^j} \int_{Q_{b,m}^{j}}
\widetilde{V}_k(x) d\mu(x)\nonumber\\
&\leq & C \|M\|_{L^{p'}(X,d\mu)}^{1/2} \sum_{j\in \mathbb{Z}}
\sum_{Q_{b,m}^{j}\in \mathcal{B}_{b,m}^j} \int_{Q_{b,m}^{j}}
\mathscr A_L(F_{b,m})(x)\widetilde{V}_k(x) d\mu(x)\nonumber\\
&\leq & C \|M\|_{L^{p'}(X,d\mu)}^{1/2}  \int_X
\mathscr A_L(F_{b,m})(x)\widetilde{V}_k(x)
d\mu(x)\nonumber\\
&\leq & C \|M\|_{L^{p'}(X,d\mu)}^{1/2} \int_X {\mathscr{A}_L}(f_k)(x)
\widetilde{V}_k(x)d\mu(x) \nonumber\\
&\leq & C \|M\|_{L^{p'}(X,d\mu)}^{1/2}  \int_X \left[\sqrt{2}
{\mathscr{A}_L}(f-f_k)(x)+(1+
\sqrt{2}){\mathscr{A}_L}(f)(x)\right]\widetilde{V}_k(x)d\mu(x)\nonumber\\
&\leq &  C \|M\|_{L^{p'}(X,d\mu)}^{1/2} \left( \|{\mathscr{A}_L}(f)\|_{L^p(X,d\mu)} +
\|{\mathscr{A}_L}(f-f_k)\|_{L^p(X,d\mu)} \right).
\end{eqnarray}
By \eqref{e4.9},
$$
\|{\mathscr{A}_L}(f-f_k)\|_{L^p(X,d\mu)} \leq c \|S_{L, 5\delta^{-2}}(f-f_k)\|_{L^p(X,d\mu)}
\leq C_p
\|f-f_k\|_{L^p(X,d\mu)},
$$
where  $c$ be the constant in \eqref{e4.9}. This, together with \eqref{e5.2}, \eqref{e5.3}, \eqref{e5.4}and  \eqref{e5.5},
 yields that  for any $f\in L^p(X,d\mu)$, $2\leq p<\infty$, and $\|V\|_{L^{p'}
(X,d\mu)}=1$,
\begin{eqnarray*}
\left|\int_X f(x)V(x)d\mu(x)\right| &\leq & C   \|M\|_{L^{p'}(X,d\mu)}^{1/2}
\|{\mathscr{A}_L}(f)\|_{L^p(X,d\mu)} +  C_p \|M\|_{L^{p'}(X,d\mu)}^{1/2}  \lim_{k\to \infty}
\|f-f_k\|_{L^p(X,d\mu)}\\
&=& C   \|M\|_{L^{p'}(X,d\mu)}^{1/2} \|{\mathscr{A}_L}(f)\|_{L^p(X,d\mu)}.
\end{eqnarray*}

By \cite[Theorem~1.3]{Hy2012}, we have
\[\|M\|_{L^q(X,d\mu)} \leq C_{\mu} \left(\f{1}{q-1}\right)^{1/q},\
 \   1<q<\infty,\]
where $C_{\mu}$ depends only on the doubling constant of the measure
$\mu$ (see also \cite[pp. 7]{St70}).

Therefore,
\[\|f\|_{L^p(X,d\mu)} \leq C p^{1/2}\  \|{\mathscr{A}_L}(f)\|_{L^p(X,d\mu)},\  \
\text{as}\   \   p\to \infty.\]
This finishes the proof of \eqref{e5.1}, and finish the proof of Proposition~\ref{th5.1}.
\end{proof}  

We now apply Proposition~\ref{th5.1} {and Lemma~\ref{le4.3}}  to obtain the following result readily.

\begin{theorem}
\label{th5.2}
 Assume that $L$ is a
densely-defined operator
on $L^2(X)$
satisfying \textbf{(H1)}, \textbf{(H2)} and \textbf{(H3)}.  The area function  $S_{L,\alpha}$ is
 defined in \eqref{e1.7}.
Then
  {there exists a constant $C=C(n,\alpha)>0$ such that}
\begin{equation}
\label{e5.6}
\|f\|_{L^p(X,d\mu)} \leq C  p^{1/2}\  \|S_{L,\alpha}(f)\|_{L^p(X,d\mu)} \  \
\text{as} \ \   p\to \infty.
\end{equation}
\end{theorem}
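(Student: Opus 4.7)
The plan is to obtain Theorem~\ref{th5.2} as an immediate corollary of the already established Proposition~\ref{th5.1} combined with the pointwise comparison between the continuous square function $S_{L,\alpha}$ and its discrete analogue $\mathscr{A}_L$ furnished by Lemma~\ref{le4.3}. First I would fix $\delta \in (0,1/144)$ as in Proposition~\ref{prop2.1}; given $\alpha > 0$, I would then pick the parameter $\rho = \rho(\alpha,\delta)$ dictated by Lemma~\ref{le4.3}, and work with the discrete square function $\mathscr{A}_L$ defined in \eqref{e4.7} for that specific value of $\rho$. With this choice, \eqref{e4.9} yields the pointwise bound $\mathscr{A}_L(f)(x) \leq c\, S_{L,\alpha}(f)(x)$ for almost every $x \in X$, where $c = c(n,\alpha)$.

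Raising both sides to the $p$-th power and integrating gives $\|\mathscr{A}_L(f)\|_{L^p(X,d\mu)} \leq c\, \|S_{L,\alpha}(f)\|_{L^p(X,d\mu)}$. Next I would invoke Proposition~\ref{th5.1} for this same $\rho$, which produces a constant $C = C(n,\rho)$, depending only on $n$ and $\alpha$ once $\delta$ has been fixed, such that
\[
 \|f\|_{L^p(X,d\mu)} \leq C\, p^{1/2}\, \|\mathscr{A}_L(f)\|_{L^p(X,d\mu)} \qquad \text{as } p\to\infty.
\]
Chaining the two inequalities delivers exactly \eqref{e5.6} with a constant of the form $C(n,\alpha)\, p^{1/2}$.

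There is no genuine obstacle left to overcome, since the analytical content has been absorbed into the two cited results: Proposition~\ref{th5.1} carries the Chang--Wilson--Wolff-style exponential-square estimate (via Theorem~\ref{prop4.5} and Corollary~\ref{cor4.6}) together with the Fefferman--Pipher $A_1$-duality argument, while Lemma~\ref{le4.3} bridges the discrete and continuous formulations by exploiting the dyadic structure from Proposition~\ref{prop2.1} and Proposition~\ref{prop4.2}. Thus Theorem~\ref{th5.2} reduces to a one-line assembly of these two ingredients, with the only bookkeeping being to ensure that the choice of $\rho$ used in Proposition~\ref{th5.1} matches the one used in the right-hand inequality of \eqref{e4.9}.
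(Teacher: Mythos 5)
Your proposal is correct and coincides with the paper's own argument: the theorem is obtained exactly by combining Proposition~\ref{th5.1} with the comparison \eqref{e4.9} from Lemma~\ref{le4.3}, keeping track that $\rho=\rho(\alpha,\delta)$ so the final constant depends only on $n$ and $\alpha$. Nothing further is needed.
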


\bigskip

\subsection{$L^p$ bounds for spectral multipliers}

Suppose that
  $L$ is  a  nonnegative self-adjoint
     operator  acting  on $L^2({X})$. Let $E(\lambda)$  be the spectral resolution
	  of  $L$. By the spectral theorem, for any bounded Borel function   $F: [0, \infty)\rightarrow {\Bbb C}$,
one can define the operator
\begin{eqnarray}
\label{e5.7}
F(L)=\int_0^{\infty} F(\lambda) dE(\lambda),
\end{eqnarray}
which is  bounded on $L^2(X)$.
In \cite[Theorem 3.1]{DOS}, X.T. Duong, E.M. Ouhabaz and A. Sikora  obtain
  the following    H\"ormander-type
spectral multiplier result for the special case $m=2$.

\begin{proposition}\label{prop5.3}
 Assume that $L$ is a
densely-defined operator
on $L^2(X)$
satisfying \textbf{(H1)}  and \textbf{(H2)}.
 Let $\beta>{n/2}$ and assume that for any $R>0$ and all Borel functions $F$  such that\, {\rm supp} $F\subseteq [0, R]$,
 \begin{eqnarray}\label{e5.8}
\int_X |K_{F(\sqrt{L})}(x,y)|^2 d\mu(x) \leq {C\over \mu(B(y, R^{-1}))} \|\delta_R F\|^2_{L^q(\mathbb R)}
\end{eqnarray}
for some $q\in [2, \infty].$ Next suppose that $F: [0, \infty)\rightarrow \mathbb{C}$ is a bounded Borel function such that
$F(0)=0,\ \sup_{t>0}\|\eta\delta_tF\|_{W^q_\beta(\RR)}<\infty$
where $\delta_t F(\lambda)=F(t\lambda)$,
   $\| F \|_{W^q_\beta(\mathbb R)}=\|(I-d^2/d x^2)^{\beta/2}F\|_{L^q(\mathbb R)}$
   and $\eta \in {C}_0^{\infty}({\mathbb R}_+)$
   is a fixed function, not identically zero.   Then
 the operator $F(L)$ is bounded on $L^p(X)$ for all $1<p<\infty$.
\end{proposition}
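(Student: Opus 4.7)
The plan is to invoke the Hörmander-type Calderón-Zygmund machinery of Duong-Ouhabaz-Sikora \cite{DOS}: I will establish a weak-type $(1,1)$ bound for $F(L)$, interpolate with the trivial $L^2$-bound provided by the spectral theorem, and then use self-adjointness together with duality to extend from $1 < p \leq 2$ to the full range $1 < p < \infty$. Writing $G(\lambda) := F(\lambda^2)$ so that $F(L) = G(\sqrt{L})$, the hypothesis on $F$ transfers into a uniform Sobolev bound on dyadic dilates of $G$, reducing the problem to weak-type $(1,1)$ for $G(\sqrt{L})$.

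The key step is a dyadic decomposition on the spectral side. Fix a nonnegative $\phi \in C_0^\infty((0,\infty))$ supported in $[1/2, 2]$ with $\sum_{\ell \in \mathbb{Z}} \phi(2^{-\ell}\lambda) = 1$ for $\lambda > 0$, and set $G_\ell(\lambda) := G(\lambda)\phi(2^{-\ell}\lambda)$, so that $G(\sqrt{L})f = \sum_{\ell \in \mathbb{Z}} G_\ell(\sqrt{L})f$ on $L^2(X)$; convergence is guaranteed by $G(0) = F(0) = 0$ together with the spectral theorem. For each $\ell$ I would then derive an off-diagonal $L^2$-bound of the form
\begin{equation*}
\Bigl(\int_{d(x,y)\geq s} |K_{G_\ell(\sqrt{L})}(x,y)|^2\, d\mu(x)\Bigr)^{1/2} \leq C\, \mu(B(y, 2^{-\ell}))^{-1/2}\, (1+2^\ell s)^{-\beta}\, \|\phi\, \delta_{2^{-\ell}} G\|_{W^q_\beta(\mathbb{R})}
\end{equation*}
for all $s > 0$. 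This is obtained by splitting $G_\ell$ through a Fourier-side truncation at scale $2^{-\ell}$, invoking the finite propagation speed property (FS) of Section 2.2 to localize the compactly supported part, and controlling the tail by the Plancherel-type hypothesis \eqref{e5.8} together with the Sobolev smoothness of $G_\ell$.

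With $\beta > n/2$, summing these off-diagonal bounds over $\ell$ yields the standard Hörmander integral condition for the kernel of $G(\sqrt{L})$, and a Calderón-Zygmund decomposition of $f$ at height $\lambda > 0$ then delivers the weak-type $(1,1)$ estimate. The main technical obstacle lies in producing the off-diagonal kernel estimate displayed above: one must carefully balance the Fourier-side truncation (which invokes (FS)) against the Plancherel bound \eqref{e5.8}, with the condition $\beta > n/2$ arising precisely from matching the doubling dimension $n$ against the decay generated by the Sobolev norm. Since the statement coincides with \cite[Theorem~3.1]{DOS} in the special case $m=2$, the detailed estimates may be invoked directly from that reference without reproducing them here.
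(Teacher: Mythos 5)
The paper offers no proof of this proposition at all: it is stated as a direct quotation of \cite[Theorem 3.1]{DOS} (the case $m=2$), so your final move of invoking that reference is exactly what the authors do, and to that extent your proposal is acceptable.

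However, the self-contained route you sketch before falling back on \cite{DOS} has a genuine flaw at the weak-type $(1,1)$ step. Under the stated hypotheses only \textbf{(H1)}, \textbf{(H2)} and the Plancherel estimate \eqref{e5.8} are available — no H\"older regularity \textbf{(H3)} — and the off-diagonal $L^2$ bounds for the dyadic pieces $G_\ell(\sqrt{L})$ (which are indeed essentially \cite[Lemma 4.3]{DOS}) encode only size decay of the kernels, not smoothness in the $y$-variable. Summing them therefore does \emph{not} yield the classical H\"ormander integral condition for $K_{G(\sqrt L)}$: in the difference $\int_{d(x,y)\ge 2d(y,y')}|K(x,y)-K(x,y')|\,d\mu(x)$ the pieces with $2^{-\ell}\gtrsim d(y,y')$ contribute a bounded amount each when estimated by size alone, and there are infinitely many such $\ell$, so the sum diverges; one would need regularity of $K_{G_\ell(\sqrt L)}(x,\cdot)$, which is precisely what cannot be extracted from \textbf{(H1)}–\textbf{(H2)}. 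This is why Duong–Ouhabaz–Sikora do not run a classical Calder\'on–Zygmund argument on $K_{F(\sqrt L)}$: they use the Duong–McIntosh weak-$(1,1)$ criterion, in which the bad parts of the Calder\'on–Zygmund decomposition are first regularized by the semigroup (one estimates the kernel of $F(\sqrt L)\bigl(I-e^{-r_B^2L}\bigr)^k$, whose extra factors supply the missing decay via the Gaussian bounds), and the weighted $L^2$ kernel estimates are fed into that criterion. So if you intend your sketch as an actual proof rather than a pointer to \cite{DOS}, the step ``summing the off-diagonal bounds gives the standard H\"ormander condition, then a Calder\'on–Zygmund decomposition gives weak $(1,1)$'' must be replaced by this semigroup-adapted singular integral argument; as a pure citation of \cite[Theorem 3.1]{DOS}, on the other hand, your conclusion stands and coincides with what the paper does.
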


It should be noted that  Gaussian bounds \textbf{(H2)} implies estimates (\ref{e5.8}) for $q=\infty$.
 Hypothesis (\ref{e5.8})
is called   the Plancherel estimate  or the Plancherel
condition. For the standard Laplace operator on Euclidean spaces $\mathbb R^n$,
this is equivalent to
$(1,2)$ Stein--Tomas restriction theorem (which is also the Plancherel estimate
of the Fourier transform) (see \cite{DOS, COSY}).

In this section we will apply the result from the previous section in order to
derive certain estimates on the $L^p$ operator norm $p\to \infty$ of spectral multipliers
$F(L)$.

\begin{theorem}\label{th5.4}
	Under the assumptions of Proposition~\ref{prop5.3}
	and the assumption in addition that $L$  satisfies  \textbf{(H3)},   the operator $F(L)$ satisfies
 \begin{eqnarray}\label{e5.9}
 \|F(L)f\|_p \leq C p\sup_{t>0}\|\eta\delta_tF\|_{W^q_\beta(\RR)} \|f\|_p,
 \end{eqnarray}
as $p\to \infty.$
\end{theorem}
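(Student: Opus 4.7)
The plan is to combine the sharp dependence established in Theorem~\ref{th5.2} with Proposition~\ref{prop5.3}. First, applying Theorem~\ref{th5.2} to $F(L)f$ in place of $f$ yields
$$\|F(L)f\|_{L^p(X)} \leq Cp^{1/2}\|S_{L,\alpha}(F(L)f)\|_{L^p(X)}$$
as $p\to\infty$. Since $F(L)$ and $t^2Le^{-t\sqrt{L}}$ are both Borel functions of $L$, they commute, so we can pull $F(L)$ inside the square function integrand and write
$$S_{L,\alpha}(F(L)f)(x)^2 = \int_0^\infty\int_{d(y,x)<\alpha t} |F(L)(t^2Le^{-t\sqrt{L}}f)(y)|^2 \frac{d\mu(y)}{\mu(B(x,t))}\frac{dt}{t}.$$
Heuristically, this is the tent-space norm of the $L^2(dt/t)$-valued function $t\mapsto t^2Le^{-t\sqrt{L}}f$ after applying $F(L)$ componentwise in~$t$.

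The crux will be to establish the $p$-uniform vector-valued inequality
$$\|S_{L,\alpha}(F(L)f)\|_p \leq C\sup_{t>0}\|\eta\delta_tF\|_{W^q_\beta(\mathbb{R})}\|S_{L,\alpha}f\|_p. \qquad (\ast)$$
Once $(\ast)$ is available, combining it with the known dual estimate $\|S_{L,\alpha}f\|_p \leq Cp^{1/2}\|f\|_p$ from \cite{GY,GX} (recorded just after the statement of Theorem~B in the introduction) gives
$$\|F(L)f\|_p \leq Cp^{1/2}\cdot \sup_{t>0}\|\eta\delta_tF\|_{W^q_\beta}\cdot Cp^{1/2}\|f\|_p \,\lesssim\, p\,\sup_{t>0}\|\eta\delta_tF\|_{W^q_\beta}\|f\|_p$$
as $p\to\infty$, which is precisely the conclusion of Theorem~\ref{th5.4}.

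To prove $(\ast)$, the natural approach is to work with the discrete square function $\mathscr{A}_L$ (pointwise equivalent to $S_{L,\alpha}$ by Lemma~\ref{le4.3}) and use the atomic decomposition $f=\sum_{Q\in \mathscr{D}}a_Q(f)$ from \eqref{e3.1}. For each atom $a_Q(f)=c_\psi\iint_{T(Q)}K_{\psi(t\sqrt{L})}(\cdot,y)(t^2Le^{-t\sqrt{L}}f)(y)\,\frac{d\mu(y)\,dt}{t}$, one studies the composition $F(L)\psi(t\sqrt{L})$, whose kernel inherits off-diagonal decay and H\"older regularity from the Plancherel hypothesis \eqref{e5.8} together with the finite-propagation-speed estimates of Lemma~\ref{le2.2}. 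These estimates should allow one to bound the discrete coefficients $\lambda_Q$ associated with $F(L)f$ by $\sup_{t>0}\|\eta\delta_tF\|_{W^q_\beta}$ times those for $f$, modulo an almost-orthogonal error controlled by the same norm. The main obstacle is achieving a constant \emph{independent of $p$} in $(\ast)$: a naive invocation of the vector-valued extension of Calder\'on--Zygmund theory (applied to $F(L)$ viewed as acting on $L^p(X;L^2(dt/t))$) would introduce a factor proportional to $\max(p,p')$, which would degrade the final bound from $p$ to $p^2$. The remedy is to exploit the spectral structure of $F(L)$ (rather than only the CZ structure) via the Plancherel condition \eqref{e5.8}, so that the tent-space bound can be obtained by a square-function/almost-orthogonality argument whose constants depend only on $\beta$, $n$ and $\sup_{t>0}\|\eta\delta_tF\|_{W^q_\beta}$.
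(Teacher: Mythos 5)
Your outer skeleton is the same as the paper's: apply Theorem~\ref{th5.2} to $F(L)f$ to get a factor $p^{1/2}$, use the known bound $\|S_{L,\alpha}f\|_p\leq Cp^{1/2}\|f\|_p$ (or its analogue) from \cite{GY,GX} for the second factor, and multiply to obtain the growth $p$. But the heart of the theorem is the middle step, and there you have a genuine gap: the inequality $(\ast)$, namely $\|S_{L,\alpha}(F(L)f)\|_p\leq C\sup_{t>0}\|\eta\delta_tF\|_{W^q_\beta}\|S_{L,\alpha}f\|_p$ with a constant \emph{independent of $p$}, is asserted and motivated but never proved. Your sketch — decompose $f$ into the pieces $a_Q(f)$, study $F(L)\psi(t\sqrt L)$, and bound the coefficients $\lambda_Q$ of $F(L)f$ by those of $f$ ``modulo an almost-orthogonal error'' — does not obviously close: $F(L)$ spreads each $a_Q(f)$ well outside $E_2(Q)$ (it is not local and has no compactly supported kernel), so there is no coefficientwise or pointwise domination of $\mathscr{A}_L(F(L)f)$ by $\mathscr{A}_L(f)$; the off-diagonal decay supplied by \eqref{e5.8} only gives polynomial tails, and turning those tails into an $L^p$ bound with a $p$-uniform constant (e.g.\ via change of aperture or a maximal-function argument) is precisely the nontrivial work that is missing. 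As written, the crux of Theorem~\ref{th5.4} is reduced to an unproved claim.

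For comparison, the paper sidesteps any $p$-uniform comparison between two conical square functions of the \emph{same} aperture. Proposition~\ref{prop5.5} proves the \emph{pointwise} estimate $S_L(F(L)f)(x)\leq C\sup_{t>0}\|\eta\delta_tF\|_{W^q_\beta}\,\mathcal{G}^*_\lambda(f)(x)$, where $\mathcal{G}^*_\lambda$ is the enlarged Littlewood--Paley functional with weight $\bigl(t/(t+d(x,y))\bigr)^{n\lambda}$; the proof uses the Calder\'on reproducing formula, the weighted Plancherel-type kernel estimate coming from \eqref{e5.8} (as in \cite[Lemma 4.3]{DOS}, \cite[Lemma 4.6]{GY2}), and the explicit integral estimate \eqref{e5.12}. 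The loss caused by $F(L)$ smearing mass across cones is absorbed into the $\mathcal{G}^*_\lambda$ weight rather than into a $p$-dependent constant. Then the known bound \eqref{e5.10}, $\|\mathcal{G}^*_\lambda(f)\|_p\leq Cp^{1/2}\|f\|_p$ (Rubio de Francia--Garc\'ia-Cuerva via \cite{GX,GY}), is applied directly to $f$, giving $\|F(L)f\|_p\leq Cp^{1/2}\|S_L(F(L)f)\|_p\leq Cp^{1/2}\sup_t\|\eta\delta_tF\|_{W^q_\beta}\|\mathcal{G}^*_\lambda(f)\|_p\leq Cp\sup_t\|\eta\delta_tF\|_{W^q_\beta}\|f\|_p$. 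If you want to rescue your route, you would need either to prove $(\ast)$ by first establishing a pointwise bound of $S_{L,\alpha}(F(L)f)$ by an enlarged functional (essentially re-deriving Proposition~\ref{prop5.5}) and then a $p$-uniform domination of that functional by $S_{L,\alpha}f$ in $L^p$ for $p\geq 2$ (a change-of-aperture/$\mathcal{G}^*_\lambda$ versus $S$ comparison with constants uniform in $p$, which itself requires proof), or simply to follow the paper and avoid $(\ast)$ altogether.
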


We will break up the proof into two parts in which we deal with
the ${\mathcal G}_{\lambda}^{\ast}$ function:
Let $\phi\in C^{\infty}_0(\mathbb R)$ be
even function with $\int \phi =1$, $\mbox{supp}\,\phi \subset (-1/10, 1/10)$.
Let $\varphi$ denote the Fourier transform of
$\phi$ and let $\psi(s)=s^{2n+2}\varphi^3(s)$.
We define the ${\mathcal G}_{\lambda}^{\ast}$ function by

$${\mathcal G}_{\lambda}^{\ast}(f)(x)=\left(\int^\infty_0\int_{X}\Big({t\over t+ d(x,y)}\Big)^{n\lambda}
|\psi(t\sqrt{L})f(y)|^2{d\mu(y) \over \mu(B(x,t))  }{ dt \over t }\right)^{1/2}, \ \ \ \lambda>1.
$$
It is known that under the assumptions \textbf{(H1)} and \textbf{(H2)} of the operator
$L$,   ${\mathcal G}_{\lambda}^{\ast}(f)$ is bounded on $L^p(X)$ for all $1<p<\infty$.
Following the method of  Rubio de Francia and Garc\'ia-Cuerva (see pp. 356 -
357, \cite{FP97}), we have
\begin{eqnarray}\label{e5.10}
\|{\mathcal G}_{\lambda}^{\ast}(f)\|_{L^p(X)}\leq Cp^{1/2}\|f\|_{L^p(X)}.
\end{eqnarray}
as $p\rightarrow \infty$. See \cite[Theorems 1.2 and  3.3]{GX} and \cite[Theorem 1.3]{GY}.

 \begin{proposition}\label{prop5.5}
 	 Under the assumptions of Proposition~\ref{prop5.3},
  there
exist a constant $C=C_{n,\lambda,\Psi}$ and some $\lambda>1$ such that  for every $f\in L^2(X)$,
\begin{eqnarray}\label{e5.11}
S_L (F(L)f)(x) \leq C\sup_{t>0}\|\eta\delta_tF\|_{W^q_\beta(\RR)} {\mathcal G}_{\lambda}^{\ast}( f)(x).
\end{eqnarray}
\end{proposition}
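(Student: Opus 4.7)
The plan is to expand $t^2 L e^{-t\sqrt L} F(L)$ as an operator-valued integral in an auxiliary scale $s$, using a Calder\'on reproducing formula built from $\psi$, then to dominate the resulting kernel via the Plancherel hypothesis \eqref{e5.8} and the multiplier norm $\mathcal N(F):=\sup_{t>0}\|\eta\delta_tF\|_{W^q_\beta(\RR)}$, and finally to recognize what remains as $\mathcal G_\lambda^\ast(f)(x)$ after an application of Cauchy--Schwarz and Fubini.

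First, since $\psi(u)=u^{2n+2}\varphi^3(u)$ is positive on $(0,\infty)$ and rapidly decaying, $c_\psi:=\int_0^\infty \psi(u)^2\,du/u$ is finite and positive; a change of variable gives $\int_0^\infty \psi(s\lambda)^2\,ds/s=c_\psi$ for every $\lambda>0$. By the spectral theorem, on the orthogonal complement of the nullspace of $L$ one has the reproducing identity $g=c_\psi^{-1}\int_0^\infty \psi(s\sqrt L)^2 g\,ds/s$, applied to $g=F(L)f$ and after commuting the self-adjoint functional-calculus factors this yields
\[
t^2 L e^{-t\sqrt L}F(L)f(y)=\frac{1}{c_\psi}\int_0^\infty \int_X K_{T_{t,s}}(y,z)\,\psi(s\sqrt L)f(z)\,d\mu(z)\,\frac{ds}{s},
\]
where $T_{t,s}=M_{t,s}(\sqrt L)$ with $M_{t,s}(\lambda)=(t\lambda)^2 e^{-t\lambda}F(\lambda^2)\psi(s\lambda)$.

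The technical heart is the kernel bound: for every $M>n$ there is $\gamma>0$ such that
\[
|K_{T_{t,s}}(y,z)|\leq C_M\,\mathcal N(F)\,\Big(\frac{\min(s,t)}{\max(s,t)}\Big)^{\gamma}\frac{1}{\mu(B(y,\max(s,t)))}\Big(1+\frac{d(y,z)}{\max(s,t)}\Big)^{-M}.
\]
This is obtained by treating $M_{t,s}$ as a spectral multiplier whose compactly localized pieces $\eta\delta_r M_{t,s}$ have $W^q_\beta$-norm controlled by $\mathcal N(F)$ times the scale-mismatch factor $(\min(s,t)/\max(s,t))^{\gamma}$ coming from $(t\lambda)^2 e^{-t\lambda}\psi(s\lambda)$. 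One then plugs into the Fourier-inversion/finite-propagation representation \eqref{e2.5} and uses \eqref{e5.8} exactly as in the Duong--Ouhabaz--Sikora proof of Proposition~\ref{prop5.3} to trade smoothness in the spectral variable for off-diagonal decay; the Gaussian bound \textbf{(H2)} and doubling give the volume factor. This step is the principal obstacle, because one must propagate the scale-mismatch gain uniformly through the Littlewood--Paley decomposition while keeping arbitrarily large polynomial decay in $d(y,z)$.

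With the kernel bound in hand, apply Cauchy--Schwarz to the representation above, splitting the weight $(1+d(y,z)/\max(s,t))^{\pm M}$ between the two factors: the first factor is then dominated by $C\mathcal N(F)^2$ after using $\int_0^\infty(\min(s,t)/\max(s,t))^{\gamma}\,ds/s<\infty$, while the second factor is
\[
\int_0^\infty\!\!\int_X \Big(1+\frac{d(y,z)}{\max(s,t)}\Big)^{-M}\frac{|\psi(s\sqrt L)f(z)|^2}{\mu(B(y,\max(s,t)))}\,d\mu(z)\,\frac{ds}{s}.
\]
Integrating $|t^2 L e^{-t\sqrt L}F(L)f(y)|^2\,d\mu(y)/\mu(B(x,t))\,dt/t$ over the cone $d(y,x)<t$, swapping the order of integration, and using $d(x,z)\leq t+d(y,z)$ together with doubling to replace $y$ by $x$, the $(y,t)$-integration collapses (for $M$ large enough, the $t$-integral against $s$ produces a bounded constant) and leaves
\[
S_L(F(L)f)(x)^2\leq C\,\mathcal N(F)^2\int_0^\infty\!\!\int_X \Big(\frac{s}{s+d(x,z)}\Big)^{n\lambda}|\psi(s\sqrt L)f(z)|^2\,\frac{d\mu(z)}{\mu(B(x,s))}\,\frac{ds}{s}=C\,\mathcal N(F)^2\,\mathcal G_\lambda^\ast(f)(x)^2,
\]
choosing $\lambda>1$ so that $n\lambda\leq M$, which proves \eqref{e5.11}.
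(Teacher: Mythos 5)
Your overall architecture (Calder\'on reproducing formula in an auxiliary scale, Cauchy--Schwarz with a split weight, then a geometric cone integral producing the factor $\bigl(\tfrac{s}{s+d(x,z)}\bigr)^{n\lambda}$) is the same as the paper's. However, the step you yourself call the technical heart is a genuine gap: the pointwise bound
$|K_{T_{t,s}}(y,z)|\leq C_M\,\mathcal N(F)\bigl(\tfrac{\min(s,t)}{\max(s,t)}\bigr)^{\gamma}\mu(B(y,\max(s,t)))^{-1}\bigl(1+\tfrac{d(y,z)}{\max(s,t)}\bigr)^{-M}$ for every $M>n$
is not available under the hypotheses of Proposition~\ref{prop5.3}. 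There $F$ is only assumed to satisfy $\sup_{t>0}\|\eta\delta_tF\|_{W^q_\beta}<\infty$ with $\beta>n/2$ and $q\in[2,\infty]$, and the Plancherel condition \eqref{e5.8} is an $L^2$ statement; the Duong--Ouhabaz--Sikora machinery you invoke produces exactly \emph{weighted $L^2$} kernel estimates of the form
$\int_X\bigl|\bigl(1+\tfrac{d(y,z)}{t}\bigr)^{\beta_0}K_{(t^2L)^{\kappa}e^{-t\sqrt L}F(L)}(y,z)\bigr|^2d\mu(z)\leq C\,\mu(B(y,t))^{-1}\mathcal N(F)^2$ with $n/2<\beta_0<\beta$, and nothing pointwise. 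Even if you try to upgrade to pointwise bounds by factoring $T_{t,s}$ into a smooth piece times $F(L)$ and applying Cauchy--Schwarz in the intermediate variable, the off-diagonal decay you can extract is capped by $\beta_0$, i.e.\ barely above $n/2$, far short of the $M>n$ you need both for the cone integration to collapse and for the choice $n\lambda\leq M$ with $\lambda>1$. Since $\beta$ may be arbitrarily close to $n/2$, your route cannot reach $\lambda>1$.

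The paper's proof sidesteps this precisely by never asking for pointwise kernel bounds: after the Cauchy--Schwarz split, the kernel is kept inside an $L^2$ integral (the $B$-factor), where the weighted Plancherel estimate applies with weight exponent $\beta_0$, and because the weight enters \emph{squared} the surviving decay in the geometric integral \eqref{e5.12} is $2\beta_0>n$, giving $\lambda=2\beta_0/n>1$ (and $\lambda=2\kappa/n$ in the far regime, using the high power $\kappa>3n$ in the reproducing formula $f=C_\psi\int_0^\infty(t^2L)^{\kappa}e^{-t\sqrt L}\psi(t\sqrt L)f\,\tfrac{dt}{t}$). If you reorganize your argument so that the kernel information is used only in this weighted $L^2$ form, the rest of your outline (the scale-mismatch gain from $(t\lambda)^2e^{-t\lambda}\psi(s\lambda)$, Fubini over the cone, doubling to pass from $y$ to $x$) can be carried out; as written, the pointwise estimate with $M>n$ is the missing, and unprovable, ingredient. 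A minor additional point: $\psi=u^{2n+2}\varphi^3(u)$ need not be positive on $(0,\infty)$ (only $\int_0^\infty\psi(u)^2\,\tfrac{du}{u}>0$ is needed for your reproducing identity), so that claim should be dropped.
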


\begin{proof}
By the spectral theory (\cite{Yo}),
for every $f\in L^2(X)$ and every $\kappa >3n$,
\begin{eqnarray*}
f =C_\psi\int^\infty_0
(t^2L)^{\kappa}e^{-t \sqrt{L}}\psi(t\sqrt{L}) f  {dt\over t}
\end{eqnarray*}
with $C^{-1}_{\psi}=\int^\infty_0 t^{2\kappa}
 e^{-t} \psi(t ){dt/t}$, and the integral converges in $L^2(X)$.
\begin{eqnarray*}
  s^2L e^{-s \sqrt{L}}F(L)f(y)
&=&  C_{\psi}  \int^{\infty}_{0} \int_{X} {s^2t^{2\kappa}\over
(s^2+t^2)^{\kappa+1}}K_{(s^2+t^2)^{\kappa+1} L^{\kappa+1}e^{-(s+t){\sqrt L}}F(L)}(y, z)
 \psi(t\sqrt{L})f (z){dt\, d\mu(z)\over t}\\
&\leq&C A\cdot B,
\end{eqnarray*}
where
\begin{eqnarray*}
A^2 &= &   \int^{\infty}_0 \int_{X}|\psi(t\SL)f(z)|^2\left(1+{d(y,z)\over s+t}\right)^{-2\beta_0} {
 {s^2t^{2\kappa}\over
(s^2+t^2)^{\kappa+1}}} {d\mu(z)dt\over t   \mu(B(y, s+t))}
\end{eqnarray*}
and
\begin{eqnarray*}
B^2
&=&  \int^{\infty}_0 \int_{X}
 {s^2t^{2\kappa}\over
(s^2+t^2)^{\kappa+1} \mu(B(y, s+t))}  \left(1+{d(y,z)\over s+t}\right)^{2\beta_0}
|K_{(s^2+t^2)^{\kappa+1} L^{\kappa+1}e^{-(s+t)\sqrt{L}}F(L)}(y, z)|^2
{d\mu(z)dt\over t}\\  &\leq& C\sup_{t>0}\|\eta\delta_tF\|^2_{W^q_\beta(\RR)},
\end{eqnarray*}
where we used the fact (see \cite[Lemma 4.6]{GY2}, \cite[Lemma 4.3]{DOS}) that  for any $n/2<\beta_0<\beta$,
 \begin{eqnarray*}
 \int_{X}\Big|\left(1+{d(y,z)\over t}\right)^{\beta_0}K_{(t^2L)^{\kappa}e^{-t\sqrt{L}}F(L)}(y,z)\Big|^2d\mu(z)
 \leq {C\over \mu(B(y,t)) }\sup_{t>0}\|\eta\delta_tF\|^2_{W^q_\beta(\RR)}.
 \end{eqnarray*}
Therefore, we put it  into the definition of $S_L$ to obtain
\begin{eqnarray*}
& & S^2_L(F(L)f)(x)\\
& = &\int^{\infty}_0\int_{d(x,y)<s}|s^2L e^{-s\sqrt{L}}F(L)f(y)|^2{d\mu(y)ds\over s \mu(B(x, s))}\\
&\leq& C\sup_{t>0}\|\eta\delta_tF\|^2_{W^q_\beta(\RR)} \int_0^{\infty}\int_X |\psi(t\SL)f(z)|^2\\
&&\hspace{0.2cm}\times\Bigg(\int^{\infty}_0
\int_{d(x,y)<s}\left(1+{d(y,z)\over s+t}\right)^{-\beta_0} {
 {s^2t^{2\kappa}\over
(s^2+t^2)^{\kappa+1}}} \left(\mu(B(x, t))\over \mu(B(x, s)) \mu(B(y, s+t))\right)
{d\mu(y)ds \over s}\Bigg)
{d\mu(z)dt\over t \mu(B(x,t))}.
\end{eqnarray*}
We will be done if we show that
\begin{eqnarray}\label{e5.12}\hspace{1cm}
& & \int^{\infty}_0
\int_{d(x,y)<s}\left(1+{d(y,z)\over s+t}\right)^{-2\beta_0} {
 {s^2t^{2\kappa}\over
(s^2+t^2)^{\kappa+1}}} \left(\mu(B(x, t))\over \mu(B(x, s)) \mu(B(y, s+t))\right)
{d\mu(y)ds \over s}\notag\\
& \leq&  C\left({t\over t+d(x,z)}\right)^{n\lambda}
\end{eqnarray}
for some $\lambda>1$. Then we will prove estimate (\ref{e5.12}) by considering the following two cases.

\medskip

\noindent
{\it Case (1).}  $d(x, z)\leq t.$ \ In this case,  observe that
if $d(x,y)\leq s$, then $\mu(B(x, s+t))\leq C \mu(B(y, s+t))$. From this, we know that
\begin{eqnarray*}
{\rm LHS\ of \ (\ref{e5.12})}
 \leq C \int^{\infty}_0  {s^2t^{2\kappa}\over
(s^2+t^2)^{\kappa+1}}
{ ds \over s } \leq C.
\end{eqnarray*}

\noindent But $d(x, z)\leq t$, so

$$\left({t\over t+d(x, z)}\right)^{n\lambda}\geq C_{n,\lambda}.$$

\noindent This implies that   \eqref{e5.12}  holds  when $d(x, z)\leq t.$

\medskip

\noindent
{\it Case (2).}   $d(x, z)> t$. In this  case, we break the integral into two pieces:

\begin{eqnarray*}
 \int^{d(x, z)/2}_0\int_{d(x, y)<s}\cdots + \int^{\infty}_{d(x, z)/2}\int_{d(x,y)<s}\cdots =:I+{\it II}.
\end{eqnarray*}

\noindent For the first term, note that $d(x,y)<s<d(x, z)/2$, and so $d(y, z)\geq d(x, z)-d(y,x)>d(x, z)/2.$ This yields

\begin{eqnarray*}
I&\leq& \int^{d(x, z)/2}_0\int_{d(x, y)<s}\left(1+{d(y,z)\over s+t}\right)^{-2\beta_0} {
 {s^2t^{2\kappa}\over
(s^2+t^2)^{\kappa+1}}} \left(\mu(B(x, t))\over \mu(B(x, s)) \mu(B(y, s+t))\right)
{d\mu(y)ds \over s}\\
&\leq&C\int^{\infty}_0
 \left({s+t\over d(x,z)}\right)^{2\beta_0}  {
 {s^2t^{2\kappa}\over
(s^2+t^2)^{\kappa+1}}}{ ds \over s}\\
&\leq&C\left({t\over d(x,z)}\right)^{n\lambda}
\end{eqnarray*}
with $\lambda=2\beta_0/n>1$. For the term $\it{II}$, we have that $s>d(x,z)/2>t/2$ and $d(y, z)\leq d(x,y)+d(x,z)\leq 3s$.
 It yields that

\begin{eqnarray*}
\it{II}
&\leq&C \int_{d(x,z)/2}^{\infty}\int_{d(x, y)<s}\left(1+{d(y,z)\over s+t}\right)^{-2\beta_0} {
 {s^2t^{2\kappa}\over
(s^2+t^2)^{\kappa+1}}} \left(\mu(B(x, t))\over \mu(B(x, s)) \mu(B(y, s+t))\right)
{d\mu(y)ds \over s}\\
&\leq&C\left({t\over d(x,z)}\right)^{2\kappa} \int^{\infty}_0 {s^2 \over
(s^2+t^2) }
{ ds \over s }\\\\
&\leq&C\left({t\over d(x,z)}\right)^{n\lambda}
\end{eqnarray*}
with $\lambda=2\kappa/n>1$.
From the above {\it Cases (1)} and {\it  (2)}, we have obtained estimate (\ref{e5.12}), and then   the proof of
estimate \eqref{e5.11} is complete.
\end{proof}

\medskip

\noindent
{\it Proof of Theorem~\ref{th5.4}.}   Theorem~\ref{th5.2} and Proposition~\ref{prop5.5}, in combination with
\eqref{e5.10}, yield  that
\begin{eqnarray*}
\|F(L)f\|_p \leq  C p^{1/2}\|S_L(F(L)f)\|_p
 &\leq&  C p^{1/2} \sup_{t>0}\|\eta\delta_tF\|_{W^q_\beta(\RR)}\|{\mathcal G}_{\lambda}^{\ast}( f)\|_p\\
&\leq&  Cp \sup_{t>0}\|\eta\delta_tF\|_{W^q_\beta(\RR)}\|f\|_p
\end{eqnarray*}
as $p\to \infty.$  This finishes the proof of Theorem~\ref{th5.4}.
 \hfill{}$\Box$

\medskip
\noindent
{\bf Remark}.  Let $M$ be a complete non-compact connected Riemannian manifold, $\mu$ the Riemannian
measure, $\nabla$ the Riemannian gradient. One defines $\Delta$, the Laplace-Beltrami operator, as a
self-adjoint positive operator on $L^2(M)$.
Assume that $M$ satisfies
the volume doubling condition \eqref{e2.1} and
  the (scaled) Poincare inequalities if there exists $C>0$ such that, for
every ball $B=B(x,r)$  and every $f$ with $f, \nabla f$ locally square integrable,
\begin{eqnarray*}
\int_B|f-f_B|^2 d\mu\leq C r^2\int_B|\nabla f|^2 d\mu.
\end{eqnarray*}
It is known (see \cite[Theorem 1.3]{ACDH}) that if
   the gradient of kernel $p_{t}(x,y)$ of the semigroup $e^{-t\Delta}$ satisfies
$$
	\big|\nabla p_{t}(x,y)\big|\leq C {C\over \sqrt{t} {\mu(B(x,\sqrt{t}))}} \exp\left(-c{d^2(x,y)\over t}\right), \ \ \ \
	\forall t>0, x, y\in M,
$$
then the  Riesz transform
 $
 \nabla \Delta^{-1/2}
 $
 is bounded on $L^p(M)$ for every $1<p<\infty.$ Applying  the method in the proof of  Theorem~\ref{th5.4},
 we can  show  that
 \begin{eqnarray}\label{e8.1}
 \| \nabla \Delta^{-1/2}f\|_p \leq C p  \|f\|_p,
 \end{eqnarray}
as $p\to \infty$. For the detail, we refer it to \cite{Wu}.

\medskip

\subsection{Two-weighted norm inequalities for the square function}
Next we apply Corollary~\ref{cor4.6} to  show the following analogous result to Theorem~11.3 in \cite{Wi08}.

\begin{theorem}
\label{th5.6}
 Assume that $L$ is a
densely-defined operator
on $L^2(X)$
satisfying \textbf{(H1)}, \textbf{(H2)} and \textbf{(H3)}.
Let $0<p<\infty$, $\tau>p/2$, and let $V$ and $W$ be two weights such that
\begin{equation}
\label{e5.13}
\int_Q V(x)\left(\log \left(e+\f{V(x)}{V_Q}\right)\right)^{\tau} d\mu(x)\leq \int_Q W(x)d\mu(x)
\end{equation}
for all $Q\in \bigcup_{b=1}^K \mathcal{D}^b$. {Then  there exists a positive constant $C=C(p,n,\tau,\rho)$  such that
for every $f\in L_0^\infty(X)$,}
\begin{equation}
\label{e5.14}
\int_X |f(x)|^p V(x)d\mu(x)\leq C\int_X   \big( \mathscr{A}_L (f)(x)\big)^p W(x)d\mu(x).
\end{equation}
As a consequence, {for every $\alpha>0$, there exists a positive constant $C=C(p,n,\alpha)$
  such that for every  $f\in L_0^\infty(X)$,}
\begin{equation}
\label{e5.15}
\int_X |f(x)|^p V(x)d\mu(x)\leq C\int_X   \big( S_{L,\alpha} (f)(x)\big)^p W(x)d\mu(x).
\end{equation}
\end{theorem}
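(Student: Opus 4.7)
The approach is to reduce the desired bound to the local weighted estimate provided by Corollary~\ref{cor4.6} via a level-set decomposition of $\mathscr{A}_L(f)$, following the strategy used to establish Proposition~\ref{th5.1}. The inequality \eqref{e5.15} will be a direct consequence of \eqref{e5.14} and the pointwise comparability $\mathscr{A}_L f(x)\leq c\,S_{L,\alpha}f(x)$ of Lemma~\ref{le4.3}, so I focus on \eqref{e5.14} and may assume $f\in L_0^\infty(X)\subset L^2(X)$. Applying the Calder\'on reproducing formula \eqref{e3.1} and the splitting \eqref{e4.8} gives $f=\sum_{b=1}^K f_b$ with $f_b=\sum_{Q\in\mathscr{D},\,E_2(Q)\in\mathcal{D}^b}a_Q(f)$; since $K=K(n,\delta)$ is finite, the elementary bound $|\sum_b z_b|^p\leq K^{(p-1)_+}\sum_b|z_b|^p$ reduces matters to proving, for each fixed $b$, that $\int_X|f_b|^p V\,d\mu\leq C\int_X(\mathscr{A}_L f)^p W\,d\mu$. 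A standard truncation as in the proof of Theorem~\ref{th4.7} allows one to work at each step with only finitely many atoms $a_P(f)$ and then pass to the limit by Fatou.

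Fix $b$ and set $E_j=\{x\in X:\mathscr{A}_L(f_b)(x)>2^j\}$. Let $\mathcal{B}_j$ denote the family of maximal dyadic cubes of $\mathcal{D}^b$ contained in $E_j$, and let $D^j$ be the partition introduced in the proof of Proposition~\ref{th5.1}. For each $Q\in\mathcal{B}_j$ define
\[
f_{b,Q}^j:=\sum_{\substack{P\in\mathscr{D},\,E_2(P)\in D^j\\ E_2(P)\subset Q}}a_P(f),
\]
so that $\support f_{b,Q}^j\subset Q$ and, by the construction of $D^j$, $\|\mathscr{A}_L(f_{b,Q}^j)\|_{L^\infty(X)}\leq 2^{j+1}$. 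Corollary~\ref{cor4.6} applied to $f_{b,Q}^j$ then yields
\[
\int_Q|f_{b,Q}^j|^p V\,d\mu\leq C\,2^{jp}\int_Q V(x)\Bigl(\log\bigl(e+V(x)/V_Q\bigr)\Bigr)^{p/2}d\mu(x).
\]
Because $\log(e+s)\geq 1$ for all $s\geq 0$ and $\tau>p/2$, the pointwise inequality $(\log(e+s))^{p/2}\leq(\log(e+s))^{\tau}$ holds, and hypothesis \eqref{e5.13} upgrades the last display to $\int_Q|f_{b,Q}^j|^p V\,d\mu\leq C\,2^{jp}W(Q)$. This is the step where the slack $\tau>p/2$ in the two-weight hypothesis is essential.

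It remains to assemble the local estimates. When $0<p\leq 1$, each atom $a_P(f)(x)$ lies in exactly one pair $(j,Q)$, and subadditivity $|\sum_i z_i|^p\leq\sum_i|z_i|^p$ gives $|f_b(x)|^p\leq\sum_j\sum_{Q\in\mathcal{B}_j}|f_{b,Q}^j(x)|^p$. Combining the disjointness $\sum_{Q\in\mathcal{B}_j}\chi_Q\leq\chi_{E_j}$ with the elementary pointwise bound $\sum_j 2^{jp}\chi_{E_j}(x)\leq C_p\,(\mathscr{A}_L f_b(x))^p$ and with $\mathscr{A}_L(f_b)\leq\mathscr{A}_L(f)$, one obtains
\[
\int_X|f_b|^p V\,d\mu\leq C\sum_j 2^{jp}\sum_{Q\in\mathcal{B}_j}W(Q)\leq C\sum_j 2^{jp}W(E_j)\leq C\int_X(\mathscr{A}_L f)^p W\,d\mu,
\]
which is the desired estimate. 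For $p>1$, direct subadditivity fails and a plain Minkowski in $L^p(V)$ loses the power of $W(E_j)$ needed to close the sum: this reassembly of $|f_b|^p$ from the local pieces is the main technical obstacle. Following the approach in the proof of Theorem~11.3 of \cite{Wi08}, one handles it by replacing $\mathcal{B}_j$ by a family of principal cubes governed by the doubling of $\|\mathscr{A}_L(f_b)\|_{L^\infty(\cdot)}$, which produces a Carleson-type embedding of the local pieces into $L^p(W\,d\mu)$ and thereby permits the cube-wise inequalities above to be summed for all $p>0$.
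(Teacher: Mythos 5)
Your overall route is the same as the paper's: truncate via the Calder\'on reproducing formula and pass to the limit as in the proof of Theorem~\ref{th4.7}, split over the $K$ adjacent grids, run a stopping-time decomposition on the level sets $E_j=\{\mathscr{A}_L(f_b)>2^j\}$ exactly as in Proposition~\ref{th5.1}, apply Corollary~\ref{cor4.6} together with \eqref{e5.13} on the maximal cubes, and sum; the passage from \eqref{e5.14} to \eqref{e5.15} via Lemma~\ref{le4.3} is also the paper's. For $0<p\le 1$ your argument is complete and correct (applying Corollary~\ref{cor4.6} with exponent $p$ and then $(\log(e+s))^{p/2}\le(\log(e+s))^{\tau}$ is legitimate since the logarithm is at least $1$).

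The genuine gap is the case $p>1$, which is precisely where the substance of Wilson's Theorem~11.3 lies, and your description of how that argument closes the sum is not what it does: there are no principal cubes and no Carleson-type embedding there. The actual mechanism (reproduced in the paper's Lemma~\ref{le5.9}, modelled on (11.4)--(11.5) of \cite{Wi08}) is: (i) the layer identity $f_b=\sum_{i\le j}f_b^{i}$ on $E_j\setminus E_{j+1}$, because the pieces of level $i>j$ are supported in $E_i\subset E_{j+1}$; (ii) the geometric-weight bound $\big|\sum_{i\le j}f_b^{i}\big|^p\le C_\varepsilon\sum_{i\le j}2^{\varepsilon(j-i)}|f_b^{i}|^p$; (iii) H\"older with exponent $2\tau/p$ against $V$, so that Corollary~\ref{cor4.6} is invoked with exponent $2\tau$ (producing the $(\log)^{\tau}$ bump that \eqref{e5.13} controls) and, crucially, a factor $(V(E_j))^{1-p/(2\tau)}$ carrying the index $j$ appears; (iv) the observation $V(Q)\le W(Q)$, which also follows from \eqref{e5.13}, so both factors can be expressed through $W$; and (v) a Young-inequality summation of the double series over $i\le j$, with $\varepsilon<p\,(1-p/(2\tau))$, reducing everything to $\sum_j 2^{jp}W(E_j)\le C\int_X(\mathscr{A}_Lf)^pW\,d\mu$. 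Note that your shortcut of using Corollary~\ref{cor4.6} at exponent $p$ cannot survive this step: without the H\"older split in (iii) there is no decay in $j-i$ to beat the factor $2^{\varepsilon(j-i)}$, so the double sum diverges. As written, therefore, your proposal proves the theorem only for $0<p\le 1$; citing Wilson's Theorem~11.3 is the right move, but the ingredients you would need to import are (i)--(v) above, not a principal-cube argument.
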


\begin{proof}
Let $F_{b,m}(x)=\sum_{
E_2(Q)\in \bigcup_{-m\leq s\leq m}\mathcal{D}_s^{b}} a_Q(f)(x)$, it
follows from the proof of Theorem~\ref{th4.7} that there exists $\{m_i\}$ such that
\begin{eqnarray}
\label{e5.16}
	\int_X |f(x)|^p V(x)d\mu(x)
	&\leq&  \underset{i\to \infty}{\underline{\lim}} \int_X
	\Big|\sum\nolimits_{b=1}^K  F_{b,m_i}(x)\Big|^p V(x)d\mu(x)  \notag\\
	&\leq & C(K,p) \sum\nolimits_{b=1}^K  \underset{i\to \infty}{\underline{\lim}}
	\int_X |F_{b,m_i}(x)|^p V(x)d\mu(x).
\end{eqnarray}
This, together with  Corollary~\ref{cor4.6}, the condition \eqref{e5.13} and the
 similar argument to that of Theorem~11.3 in \cite{Wi08}, deduces that for
 every $0<p<\infty$,  there exists a constant $C=C(p,n,\tau,\rho)$ such that
\[\int_X |F_{b,m}(x)|^p V(x)d\mu(x) \leq C \int_X
\left(\mathscr{A}_L(F_{b,m})(x)\right)^p W(x)d\mu(x)\]
for every $b\in \{1,2,\cdots, K\}$ and $m>0$.

Notice that $\mathscr{A}_L (F_{b,m})(x)\leq \mathscr{A}_L(f)(x)$, a.e. $x\in X$,
 arguments above implies that \eqref{e5.14} holds. {This, in combination with Lemma~\ref{le4.3}, deduces \eqref{e5.15} readily.} The proof of Theorem~\ref{th5.6} is end.
\end{proof}

{For every $P\in \mathscr{D}$, we denote by $\ell(P)=\delta^{k_P}$ where $k_P$ is the generational label of $P$ in $\mathscr{D}$, that is, $P\in \mathscr{D}_{k_P}$. }
Corresponding to definitions \eqref{e4.66} and \eqref{e4.7}, for a given $f\in L^2(X)$
 and $f_\mathcal{F}=\underset{E_2(P)\in \mathcal{F}}{\sum}  a_P(f)$ with $\mathcal{F}\subset \mathcal{D}^b$
 for some $b=1, 2, \cdots, K$, define
 \begin{eqnarray}
\label{e5.17}
\widetilde{{\mathscr A}}_L (f_\mathcal{F})(x):=\Bigg({\sum_{   P\in \mathscr{D},\, E_2(P)\in \mathcal{F}  }
\f{|\lambda_{P }|^2}{(\ell(P))^2 \mu(P )}\chi_{E_2(P)}(x)}\Bigg)^{1/2},
\end{eqnarray}
and
\begin{eqnarray}
\label{e5.18}
\widetilde{{\mathscr A}}_L (f )(x):=\Bigg({\sum_{   P\in \mathscr{D} }
\f{|\lambda_{P }|^2}{(\ell(P))^2\mu(P )}\chi_{E_2(P)}(x)}\Bigg)^{1/2}.
\end{eqnarray}

The following result is a straightforward result from Corollary~\ref{cor4.6}.

\begin{lemma}
\label{le5.7}
 Assume that $L$ is a
densely-defined operator
on $L^2(X)$
satisfying \textbf{(H1)}, \textbf{(H2)} and \textbf{(H3)}.
For $f_{\mathcal{G}}$ and ${Q}{\in \mathcal{D}^b}$    defined in
Theorem~\ref{prop4.5}. Assume $V$ is a weight, $\int_{Q} V(x)d\mu(x)\neq  0$,
and $0<p<\infty$. {Then there exists a positive constant $C=C(p,n,\rho)$ independent of $\mathcal{G}$, $K_\mathcal{G}$, $Q$, $f$ and $V$
  such that}
\[\int_{Q}  |f_\mathcal{G}(x)|^p V(x)d\mu(x) \leq C \, {(\ell(Q))^p}   \,  \big\|\widetilde{\mathscr {A}}_{L}
(f_{\mathcal{G}})\big\|_{L^\infty
(Q)}^p       \int_{Q} V(x)
\left(\log\left(e +\f{V(x)}{V_{Q}}\right)\right)^{p/2}d\mu(x).\]
\end{lemma}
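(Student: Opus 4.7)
The plan is to deduce Lemma~\ref{le5.7} immediately from Corollary~\ref{cor4.6} via the pointwise bound
\[
\mathscr{A}_L(f_\mathcal{G})(x) \;\leq\; \ell(Q)\,\widetilde{\mathscr{A}}_L(f_\mathcal{G})(x), \qquad x \in Q,
\]
which, on passing to the essential supremum, yields $\|\mathscr{A}_L(f_\mathcal{G})\|_{L^\infty(Q)} \leq \ell(Q)\,\|\widetilde{\mathscr{A}}_L(f_\mathcal{G})\|_{L^\infty(Q)}$. Inserting this into Corollary~\ref{cor4.6} and factoring out $\ell(Q)^p$ will give exactly the inequality stated in the lemma, so the proof reduces entirely to verifying the pointwise comparison.

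To verify it, I would fix $x \in Q$ and observe that the standing hypothesis $\bigcup_{E_2(P)\in\mathcal{G}} E_2(P) \subset Q$ ensures that only those $P \in \mathscr{D}$ with $E_2(P) \subset Q$ contribute to either defining sum at $x$. Since $E_2$, by formula \eqref{e4.4}, sends a cube of generation $k$ in $\mathscr{D}$ to one of generation $k-1$ in the adjacent system $\mathcal{D}^{b}$, the condition $E_2(P) \subset Q$ (with $Q \in \mathcal{D}_{k_0}^b$) forces $k_P \geq k_0 + 1$, whence $\ell(P) = \delta^{k_P} \leq \delta\,\ell(Q) \leq \ell(Q)$. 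Consequently, for every $P$ appearing in the sum,
\[
\frac{|\lambda_P|^2}{\mu(P)} \;=\; \ell(P)^2 \cdot \frac{|\lambda_P|^2}{\ell(P)^2 \mu(P)} \;\leq\; \ell(Q)^2 \cdot \frac{|\lambda_P|^2}{\ell(P)^2 \mu(P)},
\]
and summing over $P$ with $x \in E_2(P) \in \mathcal{G}$ produces the squared version of the asserted pointwise comparison. I do not anticipate any genuine obstacle here: Lemma~\ref{le5.7} is essentially a rescaled reformulation of Corollary~\ref{cor4.6}, with the additional weight $\ell(P)^{-2}$ inside $\widetilde{\mathscr{A}}_L$ absorbed by the prefactor $\ell(Q)^p$ on the right-hand side, exactly the same mechanism that lets one move between $\mathscr{A}_L$ and $S_{L,\alpha}$ in Lemma~\ref{le4.3}.
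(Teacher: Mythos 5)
Your proposal is correct and follows essentially the same route as the paper: both arguments reduce to the pointwise comparison $\mathscr{A}_L(f_\mathcal{G})\leq \ell(Q)\,\widetilde{\mathscr{A}}_L(f_\mathcal{G})$ on $Q$, justified by $\ell(P)\leq\ell(Q)$ for every $P$ with $E_2(P)\subset Q$ (Proposition~\ref{prop4.2}), and then invoke Corollary~\ref{cor4.6}. The paper merely phrases this by rescaling $f_\mathcal{G}$ to $f_\mathcal{G}/\ell(Q)$ and using homogeneity of $\mathscr{A}_L$, which is the same mechanism as your direct bound on $\|\mathscr{A}_L(f_\mathcal{G})\|_{L^\infty(Q)}$.
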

\begin{proof}
We  write
\[\int_{Q}  |f_\mathcal{G}(x)|^p V(x)d\mu(x) =(\ell(Q))^p
 \int_{Q}  \left|\f{f_\mathcal{G}(x)}{\ell(Q)}\right|^p V(x)d\mu(x). \]
{For every $E_2(P)\in \mathcal{G}$ with $P\in \mathscr{D}$, one can apply Proposition~\ref{prop4.2} and  $E_2(P)\subset Q$ to see    $\ell(P)\leq \ell(Q)$. Then }the fact $\mathscr{A}_L(C f_{\mathcal{G}})=|C| \mathscr{A}_L(f_\mathcal{G})$, $C\in \mathbb{R}$,  implies that
\begin{eqnarray*}
\mathscr{A}_L\Big(\f{f_\mathcal{G}(x)}{\ell(Q)}\Big) (x)
&=&\Bigg(\sum_{
 E_2(P)\in \mathcal{G} }   \f{|\lambda_P|^2}{ (\ell(Q))^2 \mu(P)}\chi_{E_2(P)}(x)\Bigg)^{1/2} \\
&\leq &  \Bigg(\sum_{
 E_2(P)\in \mathcal{G}}  \f{|\lambda_P|^2}{(\ell(P))^2 \mu(P)}\chi_{E_2(P)}(x)\Bigg)^{1/2}\\
&=& \widetilde{\mathscr{A}}_L(f_\mathcal{G})(x).
\end{eqnarray*}
Then Lemma~\ref{le5.7} follows from Corollary~\ref{cor4.6}.
\end{proof}

\smallskip

Next we aim to prove  the following  two-weighted norm inequalities,
which is analogous  to Theorems~4.1 and 4.2 in \cite{Wi}. {In the sequel, for every $b\in \{1,2,\cdots, K\}$, similarly as before,  {for every $Q\in \mathcal{D}^b$, we denote by $\ell(Q)=\delta^{k_Q}$ where $k_Q$ is the generational label of $Q$ in $\mathcal{D}^b$, that is, $Q\in \mathcal{D}^b_{k_Q}$. }}

\begin{proposition}
\label{th5.8}
 Assume that $L$ is a
densely-defined operator
on $L^2(X)$
satisfying \textbf{(H1)}, \textbf{(H2)} and \textbf{(H3)}.
Let $0<p\leq 2$, $\tau>p/2$, and let $V$ and $W$ be two weights such that
\begin{equation}
\label{e5.19}
\left(\ell(Q)\right)^p \int_Q V(x)
\left(\log \left(e+\f{V(x)}{V_Q}\right)\right)^{\tau} d\mu(x)\leq \int_Q W(x)d\mu(x)
\end{equation}
for all $Q\in \bigcup_{b=1}^K \mathcal{D}^b$.
 There exists a constant $C=C(p,n,\tau,\rho)$, such that for every $f\in L_0^\infty(X)$,
\begin{equation}
\label{e5.20}
\int_X |f(x)|^p V(x)d\mu(x)\leq C\int_X   \left( \widetilde{\mathscr{A}}_L (f)(x)\right)^p W(x)d\mu(x).
\end{equation}
\end{proposition}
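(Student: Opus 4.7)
My plan is to mimic the proof of Theorem 5.6, using Lemma 5.7 in place of Corollary 4.6 so that the extra factor $(\ell(Q))^p$ in Lemma 5.7 is absorbed by the sharper weighted hypothesis (5.19).

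First I would reduce to a truncated sum. Using the Calder\'on reproducing formula, $f=\lim_{m\to\infty}\sum_{b=1}^{K}F_{b,m}$ in $L^{2}(X)$, where $F_{b,m}(x)=\sum_{E_{2}(Q)\in\bigcup_{|s|\leq m}\mathcal{D}_{s}^{b}}a_{Q}(f)(x)$. Passing to a subsequence $m_{i}$ with $\sum_{b} F_{b,m_i}\to f$ a.e., applying Fatou's lemma and the elementary inequality $\bigl|\sum_{b=1}^{K}a_{b}\bigr|^{p}\leq C(K,p)\sum_{b=1}^{K}|a_{b}|^{p}$, and using $\widetilde{\mathscr{A}}_{L}(F_{b,m})\leq\widetilde{\mathscr{A}}_{L}(f)$ pointwise, the problem reduces to proving, uniformly in $b$ and $m$, the bound
$$\int_{X}|F_{b,m}|^{p}V\,d\mu\leq C\int_{X}\widetilde{\mathscr{A}}_{L}(F_{b,m})^{p}W\,d\mu.$$

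Second, for fixed $b$ and $m$ I would carry out the same stopping-cube decomposition that appears in the proof of Theorem 5.1, but with $\mathscr{A}_{L}$ replaced by $\widetilde{\mathscr{A}}_{L}$. Set $\widetilde{E}^{j}=\{x:\widetilde{\mathscr{A}}_{L}(F_{b,m})(x)>2^{j}\}$, let $\widetilde{D}^{j}=\{E_{2}(Q):\,2^{j}<\min_{E_{2}(Q)}\widetilde{\mathscr{A}}_{L}(F_{b,m})\leq 2^{j+1}\}$, let $\widetilde{\mathcal{B}}^{j}$ be the maximal cubes in $\widetilde{D}^{j}$, and put $F_{b,m}^{j}=\sum_{E_{2}(Q)\in\widetilde{D}^{j}}a_{Q}(f)$. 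Then $F_{b,m}=\sum_{j}F_{b,m}^{j}$ is a finite sum, $\operatorname{supp} F_{b,m}^{j}\subset\bigcup_{R\in\widetilde{\mathcal{B}}^{j}}R\subset\widetilde{E}^{j}$, and $\|\widetilde{\mathscr{A}}_{L}(F_{b,m}^{j})\|_{L^{\infty}(X)}\leq 2^{j+1}$. On each maximal cube $R\in\widetilde{\mathcal{B}}^{j}$, Lemma 5.7 yields
$$\int_{R}|F_{b,m}^{j}|^{p}V\,d\mu\leq C\,(\ell(R))^{p}\,2^{(j+1)p}\int_{R}V(x)\bigl(\log(e+V(x)/V_{R})\bigr)^{p/2}d\mu(x),$$
and since $\tau>p/2$, hypothesis (5.19) dominates the right-hand side by $C\cdot 2^{jp}\int_{R}W\,d\mu$.

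Finally I would sum over the disjoint cubes $R\in\widetilde{\mathcal{B}}^{j}$ and over $j$. For $0<p\leq 1$ one has the pointwise subadditivity $|F_{b,m}|^{p}\leq\sum_{j}|F_{b,m}^{j}|^{p}$, and combining with the geometric summation $\sum_{j:\,2^{j}<\widetilde{\mathscr{A}}_{L}(F_{b,m})(x)}2^{jp}\leq C_{p}\,\widetilde{\mathscr{A}}_{L}(F_{b,m})(x)^{p}$ one immediately concludes
$$\int_{X}|F_{b,m}|^{p}V\,d\mu\leq C\sum_{j}2^{jp}\int_{\widetilde{E}^{j}}W\,d\mu\leq C\int_{X}\widetilde{\mathscr{A}}_{L}(F_{b,m})^{p}W\,d\mu.$$
The main obstacle is the range $1<p\leq 2$, where this pointwise subadditivity fails. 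Here I would follow Wilson's approach from \cite{Wi}: exploiting the nesting structure of the stopping collections $\widetilde{\mathcal{B}}^{j}$ (each cube in $\widetilde{\mathcal{B}}^{j+1}$ is contained in a cube from some $\widetilde{\mathcal{B}}^{j'}$ with $j'\leq j$), one replaces pointwise subadditivity by a Carleson-embedding argument that still yields an estimate of the same form $\sum_{j}2^{jp}\sum_{R\in\widetilde{\mathcal{B}}^{j}}\int_{R}W\,d\mu$ for $\int|F_{b,m}|^{p}V\,d\mu$. Transferring this Carleson-type sum from the Euclidean dyadic lattice to the adjacent dyadic systems $\mathcal{D}^{b}$ of the homogeneous-type space $X$, and verifying the required nesting between stopping cubes living possibly in different $\mathcal{D}^{b}$'s, constitutes the technical core of the argument and is what forces the restriction $p\leq 2$.
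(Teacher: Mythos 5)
Your reduction to the truncated sums $F_{b,m}$ and your treatment of the range $0<p\leq 1$ are correct, and in fact slightly more direct than the paper's own route: the paper first applies H\"older's inequality on each stopping cube with exponent $2\tau/p$ and invokes Lemma~\ref{le5.7} with exponent $2\tau$, whereas you apply Lemma~\ref{le5.7} with exponent $p$ and use $\big(\log(e+V/V_R)\big)^{p/2}\leq\big(\log(e+V/V_R)\big)^{\tau}$ (since $\tau>p/2$ and $\log(e+t)\geq 1$) to absorb the factor $\big(\ell(R)\big)^p$ via \eqref{e5.19}; combined with $p$-subadditivity and the layer-cake bound $\sum_{j:2^j<\widetilde{\mathscr{A}}_L(F_{b,m})(x)}2^{jp}\leq C_p\widetilde{\mathscr{A}}_L(F_{b,m})(x)^p$, this gives the desired estimate for $p\leq 1$.

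The genuine gap is the case $1<p\leq 2$, which you defer entirely to ``Wilson's approach'' without supplying the mechanism, and where your description of the difficulty is off target. The paper does not run a Carleson-embedding argument over the stopping families $\widetilde{\mathcal{B}}^j$; instead it regroups the cubes by the size of the log-bump functional, setting $\mathcal{G}_{b,m}^{k}$ to be those $E_2(P)$ with $2^{k}\leq \f{1}{V(E_2(P))}\int_{E_2(P)}V\big(\log(e+V/V_{E_2(P)})\big)^{\tau}d\mu<2^{k+1}$, writes $F_{b,m}=\sum_k G_{b,m}^k$, proves the auxiliary Lemma~\ref{le5.9} (an estimate $\int|f_{\mathcal F}|^pV\leq C2^{[p/(2\tau)]k}\int\mathscr{A}_L(f_{\mathcal F})^pV$ for families with comparable log-bump level, itself obtained from Corollary~\ref{cor4.6} via a level-set decomposition of $\mathscr{A}_L$ and H\"older), and then converts $\mathscr{A}_L$ into $\widetilde{\mathscr{A}}_L$ through the telescoping quantities $\mathcal{E}_{E_2(P)}(G_{b,m}^k)$ and $\widetilde{\mathcal{E}}_{E_2(P)}(G_{b,m}^k)$, using the inequality $\mathcal{E}_{E_2(P)}(G_{b,m}^k)/\big(\ell(E_2(P))\big)^p\leq\widetilde{\mathcal{E}}_{E_2(P)}(G_{b,m}^k)$, which is valid precisely because $p/2\leq 1$, together with a summable weight $\aleph(k)=2^{[1/2-p/(4\tau)]k}$. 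This concavity step is where the restriction $p\leq 2$ and the factor $\big(\ell(Q)\big)^p$ in \eqref{e5.19} are actually reconciled; it is not, as you suggest, a matter of transferring a Carleson-type sum between the adjacent dyadic systems --- each $F_{b,m}$ is built from cubes of a single $\mathcal{D}^b$, so no cross-system nesting ever arises. As written, your claim that the nesting of the $\widetilde{\mathcal{B}}^j$ alone yields $\int|F_{b,m}|^pV\lesssim\sum_j2^{jp}\sum_{R\in\widetilde{\mathcal{B}}^j}W(R)$ for $p>1$ is unsubstantiated (pointwise subadditivity fails and no substitute is given), so the proof of \eqref{e5.20} is incomplete on the range $1<p\leq 2$.
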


We will break up the proof into two cases: $0<p\leq 1$, and $1<p\leq 2$. When $1<p\leq 1$,
the proof is based  on a modification of the proof of Theorem~11.3 in \cite{Wi08}.
When $1<p\leq 2$, the argument is
 more technical, we shall combine techniques from proofs of \cite[Theorem 2.7]{Wi},
 \cite[Theorem~4.1]{Wi} and \cite[Theorem~11.3]{Wi08}, and  a crucial ingredient for
 the proof is Corollary~\ref{cor4.6}. In the sequel, for every function $V\in L^1_{\rm loc}(X)$
 and set $E$, we write $V(E)=\int_E V(x)  d\mu(x).$

\medskip

\noindent
{\it Proof of Proposition~\ref{th5.8} for $0<p\leq 1$.}    For every $b\in \{1,2,\cdots, K\}$
and $m\geq 1$, define $F_{b,m}=\underset{
E_2(P)\in \bigcup_{-m\leq s\leq m}\mathcal{D}_s^{b}}{\sum} a_P(f)(x)$ as we did in
the proof of Theorem~\ref{th5.6}. With some notations  as in  the proof
of Theorem~\ref{th5.1}, for $ j\in \mathbb{Z}$ we set
 \[\widetilde{E}_{b,m}^j=\left\{x\in X:\
\widetilde{ \mathscr A}_L\left(F_{b,m}\right)(x)
>2^j\right\},\]
\[\widetilde{D}_{b,m}^{j}=\left\{E_2(Q):\ \ Q\in \mathscr{D},  E_2(Q)\in \bigcup_{-m\leq s
\leq m}\mathcal{D}_s^{b} , \  \  E_2(Q)\subset \widetilde{E}_{b,m}^j,\  E_2(Q)\not\subset
\widetilde{E}_{b,m}^{j+1}\right\},\]
\[\widetilde{F}_{b,m}^j(x) =\sum_{E_2(Q)\in \widetilde{D}_{b,m}^j} a_Q(f)(x),\ \  \  j\in \mathbb{Z},\]
and
\[ \widetilde{\mathcal{B}}_{b,m}^j=\left\{Q_{b,m}^{j}: \ \ Q_{b,m}^{j} \ \mbox{is the maximal
dyadic cube in}\  \widetilde{D}_{b,m}^j\right\}.\]

From the   discussion in the proof of Theorem~\ref{th5.1}, we have the following conclusions:
\begin{enumerate}[(1)]
  \item Sets $\widetilde{D}_{b,m}^{j}$ for different $j$
are disjoint
(some of them might be empty) and $\bigcup_{-m\leq s \leq m}\mathcal{D}_s^{b}=\cup_j \widetilde{D}_{b,m}^{j}$.
  \item $F_{b,m}= \sum_{j} \widetilde{F}_{b,m}^j$.
  \item $\widetilde{\mathscr{A}}_L(F_{b,m}^j)\leq 2^{j+1}$ everywhere.
  \item Any two cubes in $\widetilde{\mathcal{B}}_{b,m}^j$ are disjoint.
Besides it is easy to check
 $$
 {\rm{supp}}\ \widetilde{F}_{b,m}^j\subset \bigcup_{Q_{b,m}^{j}\in \widetilde{\mathcal{B}}_{b,
 m}^j}Q_{b,m}^{j} \subset  \widetilde{E}_{b,m}^j.
 $$
\end{enumerate}

Without  loss of generality, we  assume that $V_{Q_{b,m}^{j}}
\neq 0$
for all $Q_{b,m}^{j}\in \widetilde{\mathcal{B}}_{b,m}^j$. For $0<p\leq 1$,
\begin{eqnarray}
\label{e5.21}
\int_X  \left|F_{b,m}(x)\right|^p     V(x)d\mu(x) &\leq &
  \sum_{j\in \mathbb{Z}}   \sum_{Q_{b,m}^{j}\in \widetilde{\mathcal{B}}_{b,m}^j}
\int_{Q_{b,m}^{j}}  \left| \widetilde{F}_{b,m}^j (x)\right|^p \ V(x)
d\mu(x) \notag\\
&\leq & \sum_{j\in \mathbb{Z}}   \sum_{Q_{b,m}^{j}\in \widetilde{\mathcal{B}}_{b,m}^j}
(V(Q_{b,m}^{j}))^{1-p/(2\tau)} \Bigg(\int_{Q_{b,m}^{j}}
|\widetilde{F}_{b,m}^j (x)|^{2\tau} \ V(x)
d\mu(x) \Bigg)^{p/(2\tau)}.
\end{eqnarray}

Notice that  $
\big\|\widetilde{\mathscr A}_L (F_{b,m}^j)\big\|_{L^\infty}\leq 2^{j+1}$, one can apply
Lemma~\ref{le5.7} and \eqref{e5.19}  to see
\begin{eqnarray}
\label{e5.22}
\int_{Q_{b,m}^{j}} \left|\widetilde{F}_{b,m}^j (x)\right|^{2\tau} \ V(x)
d\mu(x)
&\leq  & C 2^{2\tau j} {\big(\ell({Q_{b,m}^{j}})\big)^{2\tau}}
 \int_{Q_{b,m}^{j}} V(x)
 \Bigg(\log\Bigg(e +\f{V(x)}{V_{Q_{b,m}^{j}}}\Bigg)\Bigg)^{\tau}d\mu(x) \notag \\
&\leq & C 2^{2\tau j} {\big(\ell({Q_{b,m}^{j}})\big)^{2\tau -p} } W(Q_{b,m}^{j}).
\end{eqnarray}
Put it into \eqref{e5.21}, we obtain
\begin{eqnarray*}
\int_X  \left|F_{b,m}(x)\right|^p     V(x)d\mu(x) &\leq &
C  \sum_{j\in \mathbb{Z}}  2^{pj} \sum_{Q_{b,m}^{j}\in \widetilde{\mathcal{B}}_{b,m}^j}
\left[\big[\ell({Q_{b,m}^{j}})\big]^p  V(Q_{b,m}^{j})\right]^{1-p/(2\tau)}
\left(W(Q_{b,m}^{j})\right)^{p/{(2\tau)}}\\
&\leq &  C  \sum_{j\in \mathbb{Z}}  2^{pj} \sum_{Q_{b,m}^{j}\in \widetilde{\mathcal{B}}_{b,m}^j}
  W(Q_{b,m}^{j})\\
&= &  C  \sum_{j\in \mathbb{Z}}  2^{pj} \sum_{Q_{b,m}^{j}\in \widetilde{\mathcal{B}}_{b,m}^j}
 \int_{Q_{b,m}^{j}} W(x)d\mu(x)\\
 &= &  C  \sum_{j\in \mathbb{Z}}   \sum_{Q_{b,m}^{j}\in \widetilde{\mathcal{B}}_{b,m}^j}
 \int_{Q_{b,m}^{j}}\left(\widetilde{\mathscr{A}}_L (F_{b,m})(x)\right)^p W(x)d\mu(x)\\
&\leq &C\int_X   \left(\widetilde{\mathscr{A}}_L (F_{b,m})(x)\right)^p W(x)d\mu(x),
\end{eqnarray*}
where the second inequality follows from the fact that \eqref{e5.19} implies
$\big[\ell({Q_{b,m}^{j}})\big]^p  V(Q_{b,m}^{j}) \leq W(Q_{b,m}^{j})$.

This, together with \eqref{e5.16}, implies that  Proposition~\ref{th5.8} holds for $0<p\leq 1$.
 \hfill{}$\Box$

\bigskip

To show Proposition~\ref{th5.8} for  $1<p\leq 2$, we need the following lemma.

\begin{lemma}
\label{le5.9}
Let $0<p<\infty$, $\tau>p/2$, and let $V$ be a weight.  Assume
 $\mathcal{F}\subset \mathcal{D}^b$ for some $b\in \{1,2,\cdots, K\}$,
 $\#\mathcal{F}<\infty$ and  there exists some ${k\in \mathbb{N}} $ such that
\begin{equation}
\label{e5.23}
2^{k }\leq \f{1}{\int_{Q} V(x)d\mu(x)} \int_{Q}
 V(x)\Bigg(\log\Bigg(e+\f{V(x)}{V_{Q}}\Bigg)\Bigg)^{\tau} d\mu(x)<2^{k +1},\  \   \forall
  Q\in \mathcal{F}.
\end{equation}
For any $f\in L_0^\infty(X)$, define $f_\mathcal{F}
=\underset{E_2(P)\in \mathcal{F}}{\sum} a_P (f)$. Then there exists a positive constant
 $C(p,n,\tau,\rho)$ independent of $\mathcal{F}$ and $f$, such that
\begin{equation}
\label{e5.24}
\int_X \left|f_{\mathcal{F}}(x)\right|^p V(x)d\mu(x)
\leq C 2^{[p /(2\tau)]k } \int_X \left|  \mathscr{A}_L(f_{\mathcal{F}})(x)\right|^p V(x)d\mu(x).
\end{equation}
\end{lemma}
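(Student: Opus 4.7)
The plan is to adapt the level-set/stopping-time strategy used for the $0<p\leq 1$ portion of Proposition~\ref{th5.8}, exploiting the hypothesis $\tau>p/2$ via Corollary~\ref{cor4.6} at the inflated exponent $2\tau$ followed by a H\"older descent to $p$. This descent is the key trick: it converts the factor $2^k$ produced by \eqref{e5.23} into the sharper $2^{[p/(2\tau)]k}$ required by the lemma (a direct application of Corollary~\ref{cor4.6} at exponent $p$ would only yield $2^k$).

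Concretely, set $\Omega_j:=\{x\in X:\mathscr{A}_L(f_\mathcal{F})(x)>2^j\}$ and define
\[ D_j:=\bigl\{E_2(P)\in\mathcal{F}:E_2(P)\subset\Omega_j,\ E_2(P)\not\subset\Omega_{j+1}\bigr\},\qquad F^j:=\sum_{E_2(P)\in D_j}a_P(f), \]
so that $\mathcal{F}=\bigsqcup_j D_j$ and $f_\mathcal{F}=\sum_j F^j$. Denote by $\mathcal{B}_j$ the maximal cubes of $D_j$; these are pairwise disjoint, contained in $\Omega_j$, and belong to $\mathcal{F}$, so \eqref{e5.23} applies on each $R\in\mathcal{B}_j$. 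Exactly as in the proof of Theorem~\ref{th5.1}, one verifies the uniform pointwise bound $\mathscr{A}_L(F^j)(x)\leq 2^{j+1}$. Applying Corollary~\ref{cor4.6} to the restriction of $F^j$ to $R$ at exponent $2\tau$, and then using \eqref{e5.23}, yields
\[ \int_R|F^j|^{2\tau}V\,d\mu\leq C\,2^{2\tau j}\cdot 2^{k+1}V(R); \]
H\"older's inequality at the conjugate pair $(2\tau/p,\,2\tau/(2\tau-p))$ then gives the local bound $\int_R|F^j|^pV\,d\mu\leq C\,2^{jp}\,2^{[p/(2\tau)]k}V(R)$, and summing the disjoint family $\mathcal{B}_j$ produces
\[ \int_X |F^j|^pV\,d\mu\leq C\,2^{jp}\,2^{[p/(2\tau)]k}V(\Omega_j). \]

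For $0<p\leq 1$ the lemma is immediate from the quasi-subadditivity $|\sum_jF^j|^p\leq\sum_j|F^j|^p$ combined with the standard Cavalieri-type bound $\sum_j 2^{jp}V(\Omega_j)\leq C\int\mathscr{A}_L(f_\mathcal{F})^pV\,d\mu$. The main obstacle will be the case $p>1$: a naive triangle inequality in $L^p(V)$ produces the sum $\sum_j 2^j V(\Omega_j)^{1/p}$, which is not generically controlled by $\bigl(\int\mathscr{A}_L^pV\,d\mu\bigr)^{1/p}$. I will handle this by dualizing in the spirit of Proposition~\ref{th5.1}: given a test function $g\in L^{p'}(V)$ with $\|g\|_{L^{p'}(V)}\leq 1$, I bound $\int_R|F^j|\,g\,V\,d\mu$ on each $R\in\mathcal{B}_j$ by H\"older at $(2\tau,(2\tau)')$, using the $L^{2\tau}$ estimate above for the $F^j$-factor, and then reassemble the double sum over $(j,R)$ via a Hardy--Littlewood maximal-function bound on $g$ together with the dyadic nesting of $\bigcup_j\mathcal{B}_j$. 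The delicate step is this reassembly: the overlaps of stopping cubes across different generations must be absorbed without a logarithmic loss, which should follow from the sharp $A_\infty$/principal-cube bookkeeping underpinning Wilson's argument, with the final constant depending on $p$ through $\|M\|_{L^{p'}(V)}$ exactly as in Proposition~\ref{th5.1}.
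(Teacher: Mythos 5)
Your set-up (the stopping sets $\Omega_j$, the pieces $F^j$ with $\mathscr{A}_L(F^j)\leq 2^{j+1}$, and Corollary~\ref{cor4.6} applied at the exponent $2\tau$ on the maximal cubes of $D_j$ followed by H\"older to descend to $p$, which is exactly where the factor $2^{[p/(2\tau)]k}$ comes from) is sound, and it does settle the range $0<p\leq 1$. But the lemma is stated for all $0<p<\infty$, and in the paper it is invoked precisely for $1<p\leq 2$ (in the proof of Proposition~\ref{th5.8}); for $p>1$ your argument is only a programme. You yourself flag the ``reassembly'' of the double sum over $(j,R)$ as the delicate step and propose to carry it out by duality as in Proposition~\ref{th5.1}, using a Hardy--Littlewood maximal bound on $g\in L^{p'}(V)$ and ``$A_\infty$/principal-cube bookkeeping''. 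As stated this does not go through: $V$ is an arbitrary weight, so the Hardy--Littlewood maximal operator (taken with respect to $\mu$) is not bounded on $L^{p'}(V\,d\mu)$, and no $A_\infty$ control of $V$ is assumed. The device of Proposition~\ref{th5.1} --- replacing the dual function by an $A_1$ majorant built from iterated maximal functions --- has no analogue here, because there the duality is with respect to the unweighted measure $d\mu$, whereas in \eqref{e5.24} the same weight $V$ sits on both sides. (A duality argument can in fact be salvaged, but only with a maximal operator adapted to the measure $V\,d\mu$ itself, and you would still have to write that out; as proposed, the key case is a genuine gap.)

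The paper avoids all of this with one observation your scheme is missing: since ${\rm supp}\, f_{\mathcal{F},i}\subset E_i$, on each annulus $E_j\setminus E_{j+1}$ one has $f_{\mathcal{F}}=\sum_{i\leq j}f_{\mathcal{F},i}$. Writing $\int_X|f_\mathcal{F}|^pV\,d\mu=\sum_j\int_{E_j\setminus E_{j+1}}\big|\sum_{i\leq j}f_{\mathcal{F},i}\big|^pV\,d\mu$, it uses the weighted quasi-triangle inequality $\big|\sum_{i\leq j}a_i\big|^p\leq C_\varepsilon\sum_{i\leq j}2^{\varepsilon(j-i)}|a_i|^p$ (valid for every $p>0$), then H\"older with exponents $2\tau/p$ and its conjugate in the measure $V\,d\mu$ together with $E_j\subset E_i$, then the same local estimate you derived, $\int_{E_i}|f_{\mathcal{F},i}|^{2\tau}V\,d\mu\leq C\,2^{2\tau i}2^{k}V(E_i)$ (Corollary~\ref{cor4.6} plus \eqref{e5.23}), and finally the choice $\varepsilon=\frac{p}{2}\left(1-\frac{p}{2\tau}\right)$ to sum the geometric series, ending with $C\,2^{pk/(2\tau)}\sum_j 2^{jp}V(E_j)\leq C\,2^{pk/(2\tau)}\int_X\mathscr{A}_L(f_{\mathcal{F}})^pV\,d\mu$. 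This treats all $0<p<\infty$ in one stroke and needs no maximal-function input whatsoever; incorporating that identity and the $2^{\varepsilon(j-i)}$ trick is what your proposal lacks for the case the paper actually needs.
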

\begin{proof}
Similar to notations in the proof of Proposition~\ref{th5.1}, set
 \[E_j=\left\{x\in X:\   \mathscr A _L\left(f_\mathcal{F}\right)(x)
>2^j\right\},\]
\[D_{j}=\left\{E_2(Q)\in \mathcal{F}:\ \   E_2(Q)\subset E_j,\  E_2(Q)\not\subset
E_{j+1}\right\}, \  \   j\in \mathbb{Z},\]
\[f_{\mathcal{F},\, j}(x) =\sum_{E_2(Q)\in D_j} a_Q(f)(x),\ \  \  j\in \mathbb{Z},\]
and
\[ \mathcal{B}_j=\left\{E_2(Q): \ \ E_2(Q) \ \mbox{is the maximal
dyadic cube in}\  D_j\right\}.\]

A key observation is that using a similar argument to that in \cite[pp. 192]{Wi08},
 we obtain $f_{\mathcal{F}}(x)=\underset{i:\ i\leq j}{\sum} f_{\mathcal{F},\, i}(x)$
  when $x\in E_j\setminus E_{j+1}$.

Let $\varepsilon>0$ to be chosen later. Recall that
$\{x\in X:\  f_{\mathcal{F}}(x)\neq 0\}\subset \bigcup_j E_j$, we follow \cite[(11.4)]{Wi08} to see
\begin{eqnarray}
\label{e5.25}
\int_{X} \left|f_{\mathcal{F}}(x)\right|^p V(x)d\mu(x) & =&\sum_{j\in \mathbb{Z}}
\int_{E_j\setminus E_{j+1} }  \left|f_{\mathcal{F}}(x)\right|^p V(x)d\mu(x)  \notag\\
&= &\sum_{j\in \mathbb{Z}} \int_{E_j\setminus E_{j+1} }
\left|\underset{i:\ i\leq j}{\sum} f_{\mathcal{F},\,i}(x)\right|^p V(x)d\mu(x) \notag\\
&\leq &  C  \sum_{j\in \mathbb{Z}} \int_{E_j\setminus E_{j+1} }
\sum_{i:\  i\leq j}  2^{\varepsilon(j-i)} \left|f_{\mathcal{F},\,i}(x)\right|^p V(x)d\mu(x)  \notag\\
&\leq & C   \sum_{i,j:\  i\leq j}  2^{\varepsilon(j-i)}  \int_{E_j}
\left|f_{\mathcal{F},\, i}(x)\right|^p V(x)d\mu(x) \notag\\
&\leq & C   \sum_{i,j:\  i\leq j}  2^{\varepsilon(j-i)}
 \left(V(E_j)\right)^{1-p/(2\tau)} \left(\int_{E_i}
 \left|f_{\mathcal{F},\,i}(x)\right|^{2\tau}  V(x)d\mu(x) \right)^{p/(2\tau)},
\end{eqnarray}
where the  last inequality follows from H\"older's inequality and $E_j \subset E_i$ for $i\leq j$.

Notice that $Q_{i}\in \mathcal{F}$ for every $Q_{i}\in  \mathcal{B}_i$.
  It follows from Corollary~\ref{cor4.6} and \eqref{e5.23}  that
\begin{eqnarray*}
\int_{E_i}\left|f_{\mathcal{F},\, k}(x)\right|^{2\tau}  V(x)d\mu(x)&=&  \sum_{Q_{ i}\in  \mathcal{B} _i}
\int_{Q_{ i}}\left|f_{\mathcal{F},\, i}(x)\right|^{2\tau}  V(x)d\mu(x)\\
 &\leq&  C 2^{2i\tau} \sum_{Q_{ i}\in \mathcal{B}_i}\int_{Q_{ i}}
 V(x)\left(\log\left(e +\f{V(x)}{V_{Q_{ i}}}\right)\right)^{\tau}d\mu(x)\\
 &\leq  & C 2^{2i\tau} 2^{k } \sum_{Q_{ i}\in  \mathcal{B}_i}  V(Q_{ i})\\
 &\leq & C 2^{2i\tau} 2^{k } V(E_k).
\end{eqnarray*}

Put it into \eqref{e5.25}, and we follow estimates in \cite[(11.5)]{Wi08} to obtain
\begin{eqnarray*}
\int_{X} \left|f_{\mathcal{F}}(x)\right|^p V(x)d\mu(x) & \leq &
C   \sum_{i,j:\  i\leq j}  2^{\varepsilon(j-i)}
\left(V(E_j)\right)^{1-p/(2\tau)}  2^{ip} 2^{p k /(2\tau)} \left(V(E_k)\right)^{p/(2\tau)} \\
&\leq & C  2^{p k /(2\tau)} \sum_{i,j:\  i\leq j}
2^{-\left\{p[1-p/(2\tau)]-\varepsilon\right\}(j-i)}
\left[2^{jp }V(E_j)\right]^{1-p/(2\tau)} \left[2^{ip} V(E_k)\right]^{p/(2\tau)} \\
&\leq &  C  2^{p k /(2\tau)} \sum_{i,j:\  i\leq j}
2^{-\{p[1-p/(2\tau)]-\varepsilon\}(j-i)}   \left[\left(1-p/(2\tau)\right) 2^{jp }V(E_j)\right] \\
& &+   C  2^{p k /(2\tau)} \sum_{i,j:\  i\leq j}
2^{-\{p[1-p/(2\tau)]-\varepsilon\}(j-i)}   \left[\left(p/(2\tau)\right)2^{kp} V(E_i)\right]
\end{eqnarray*}

Choose $\displaystyle \varepsilon=\f{p\left[1-p/(2\tau)\right]}{2}>0$,
one can apply the last part in the proof of \cite[Theorem~11.3]{Wi08} to
obtain \eqref{e5.24} readily and we omit it.
\end{proof}

\medskip

\noindent
{\it Proof of Proposition~\ref{th5.8} for $1<p\leq 2$. }
For every $b\in \{1,2,\cdots, K\}$ and $m\geq 1$, define $F_{b,m}=\underset{
E_2(P)\in \bigcup_{-m\leq s\leq m}\mathcal{D}_s^{b}}{\sum} a_P(f)(x)$.
Following the line of \cite[Theorem~2.7]{Wi}, for every $k\in \mathbb{N}$, define
\[\mathcal{G}_{b,m}^{k}=\left\{E_2(P)\in \bigcup_{-m\leq s\leq m}
\mathcal{D}_s^{b}:\  \ 2^{k}\leq \f{1}{\int_{E_2(P)} V(x)d\mu(x)} \int_{E_2(P)}
V(x)\left(\log\left(e+\f{V(x)}{V_{E_2(P)}}\right)\right)^{\tau} d\mu(x)<2^{k+1}  \right\},\]
and
\[G_{b,m}^k(x)=\sum_{E_2(P)\in \mathcal{G}_{b,m}^k } a_P(f)(x).\]
It's cleat that $\mathcal{G}_{b,m}^{i}\cap \mathcal{G}_{b,m}^{j}=\emptyset$ for arbitrary $i\neq j$, and
\[ \bigcup_{-m\leq s\leq m} \mathcal{D}_s^b =\bigcup_{k\in \mathbb{N}} \mathcal{G}_{b,m}^{k}.\]

For every $k\in \mathbb{N}$, define
\[\mathcal{E}_{E_2(R)}(G_{b,m}^k)\equiv  \Bigg(\sum_{E_2(R)\subset E_2(P)
\in \mathcal{G}_{b,m}^k } \f{|\lambda_R|^2}{\mu(R)}\Bigg)^{p/2}
- \Bigg(\sum_{E_2(R)\subsetneqq E_2(P)\in \mathcal{G}_{b,m}^k } \f{|\lambda_R|^2}{\mu(R)}\Bigg)^{p/2} \]
for every $E_2(R)\in \bigcup_{-m\leq s\leq m}\mathcal{D}_s^{b}$. As shown in the proof of \cite[Theorem~2.7]{Wi},
\[F_{b,m}=\sum_{i\in \mathbb{N}} G_{b,m}^k,\]
\[\mathcal{E}_{E_2(R)}(G_{b,m}^k)=0\ \  \  \text{if} \  \ \  E_2(R)\notin \mathcal{G}_{b,m}^k,\]
and
\begin{equation*}
\left(\mathscr{A}_L(G_{b,m}^k)\right)^p (x)=\sum_{x\in E_2(R)\in\mathcal{G}_{b,m}^i }
 \mathcal{E}_{E_2(R)}(G_{b,m}^k).
\end{equation*}

\smallskip

The left of the proof  is to follow   the technique as  in the proof of \cite[Theorem~2.7]{Wi} and \cite[Theorem~4.1]{Wi}.
In details, for every  $E_2(P)\in \mathcal{G}_{b,m}^{k}$, $k\in \mathbb{N}$, it follows from the
definition of $\mathcal{G}_{b,m}^{k}$ and \eqref{e5.19} to see
\[
2^{k}V(E_2(P))
\leq    \int_{E_2(P)}  V(x)\left(\log\left(e+\f{V(x)}{V_{E_2(P)}}\right)\right)^{\tau} d\mu(x)
\leq   \f{W(E_2(P))}{{\left[\ell({E_2(P)})\right]^p}},
\]
and the proof of \cite[Theorem~4.1]{Wi} shows that since $p/2\leq 1$,
\begin{eqnarray*}
\f{\mathcal{E}_{E_2(P)}(G_{b,m}^k)}{{\left[\ell({E_2(P)})\right]^p}}
&\leq  & \Bigg(\sum_{E_2(R)\subset E_2(P)\in \mathcal{G}_{b,m}^k }
\f{|\lambda_R|^2}{{(\ell({R}))^2 }\mu(R)}\Bigg)^{p/2}
  - \Bigg(\sum_{E_2(R)\subsetneqq E_2(P)\in \mathcal{G}_{b,m}^k }
  \f{|\lambda_R|^2}{{(\ell({R}))^2}\mu(R)}\Bigg)^{p/2}\notag\\
&=:& \widetilde{\mathcal{E}}_{E_2(P)}(G_{b,m}^k).
\end{eqnarray*}
Moreover,
\[\left(\widetilde{\mathscr{A}}_L(G_{b,m}^k)\right)^p (x)
=\sum_{x\in E_2(P)\in \mathcal{G}_{b,m}^k} \widetilde{\mathcal{E}}_{E_2(P)}(G_{b,m}^k). \]

Observe that for any increasing function $\aleph:\   [0,\infty]\to [1,\infty)$
satisfying $\sum_{k=0}^\infty \aleph(k)^{-1/(p-1)} \leq 1$, there exists a constant
 $C=C(\aleph)$ such that
\[\int_{X} \left|F_{b,m}(x)\right|^p V(x)d\mu(x)  \leq   C(\aleph)
 \sum_{k\in \mathbb{N}}  \aleph(k)  \int_X   \left|G_{b,m}^k(x)\right|^p V(x)d\mu(x). \]

 Since $f\in L_0^\infty(X)$, we have  $\#\mathcal{G}_{b,m}^k <\infty$
 for every $k\in \mathbb{N}$. Hence one can apply Lemma~\ref{le5.9} and
 take $\aleph(k)=2^{[1/2-p/(4\tau)]k}$, $k\in \mathbb{N}$,  to obtain when $1<p\leq 2$,
\begin{eqnarray}
\label{e5.26}
\int_{X} \left|F_{b,m}(x)\right|^p V(x)d\mu(x) & \leq &  C  \sum_{k\in \mathbb{N}}
2^{[1/2-p/(4\tau)]k}  \int_X   \left|G_{b,m}^k(x)\right|^p V(x)d\mu(x)  \notag\\
&\leq & C \sum_{k\in \mathbb{N}}     2^{[1/2-p/(4\tau)]k}   2^{[p /(2\tau)]k} \int_X
\left|  \mathscr{A}_L(G_{b,m}^k)(x)\right|^p V(x)d\mu(x) \notag \\
&=&   C \sum_{k\in \mathbb{N}}    2^{-[1/2-p/(4\tau)]k}  2^{k}  \sum_{E_2(R)
\in \mathcal{G}_{b,m}^{k}} \mathcal{E}_{E_2(R)}(G_{b,m}^k) V(E_2(R)) \notag \\
&\leq &  C  \sum_{k\in \mathbb{N}}2^{-[1/2-p/(4\tau)]k} \sum_{E_2(R)\in \mathcal{G}_{b,m}^{k}}
 \f{\mathcal{E}_{E_2(R)}(G_{b,m}^k)}{{\left[\ell({E_2(R)})\right]^p}}  W(E_2(R)) \notag\\
&\leq &  C  \sum_{k\in \mathbb{N}}  2^{-[1/2-p/(4\tau)]k} \sum_{E_2(R)\in \mathcal{G}_{b,m}^{k}}
  \widetilde{\mathcal{E}}_{E_2(R)}(G_{b,m}^k)  W(E_2(R)) \notag\\
&\leq & C \int_X     \left(\widetilde{\mathscr{A}}_L(F_{b,m})\right)^p  (x) W(x)d\mu(x),
\end{eqnarray}
where the last inequality follows from $1/2-p/(4\tau)>0$.

This, together with \eqref{e5.16}, implies that  \eqref{e5.20} holds for $1<p\leq 2$.
The proof of Proposition~\ref{th5.8} is complete.
\hfill{}$\Box$

\bigskip

As a consequence of Proposition~\ref{th5.8}, we have the following result.

\begin{theorem}
\label{cor5.10}
 Assume that $L$ is a
densely-defined operator
on $L^2(X)$
satisfying \textbf{(H1)}, \textbf{(H2)} and \textbf{(H3)}.
Let $0<p\leq 2$, $\tau>p/2$, and let $V$ and $W$ be two weights such that
\begin{equation*}
\left({\ell({Q})}\right)^p \int_Q V(x)\left(\log \left(e+\f{V(x)}{V_Q}
\right)\right)^{\tau} d\mu(x)\leq \int_Q W(x)d\mu(x)
\end{equation*}
for all $Q\in \bigcup_{b=1}^K \mathcal{D}^b$.
 For every   $\alpha>0$,  there exists a positive constant $C=C(p,n,\tau,\alpha)$, such that for every $f\in L_0^\infty(X)$,
\begin{equation}
\label{e5.27}
\int_X |f(x)|^p V(x)d\mu(x)\leq C \int_X
 \left( \widetilde{S}_{L,\alpha} (\sqrt{L}f)(x)\right)^p W(x)d\mu(x).
\end{equation}
where
\[\widetilde{S}_{L,\alpha}(g)(x):=\left(\int_0^\infty \int_{d(y,x)<\alpha t}
\left|t\sqrt{L}e^{-t\sqrt{L}}g(y)\right|^2\f{dy}{\mu(B(x,t))}\f{dt}{t}\right)^{1/2},\ \  g\in L^2(X), \  \ \alpha>0.\]
In particular, when $1< p\leq 2$ and $W\in A_p$, {there exists a positive constant $C=C(p,n,\tau,\|W\|_{A_p})$} such that for every $f\in L_0^\infty(X)$,
  \begin{equation}
\label{e5.28}
\int_X |f(x)|^p V(x)d\mu(x)\leq C \int_X   \big| \sqrt{L}f(x)\big|^p W(x)d\mu(x).
\end{equation}
\end{theorem}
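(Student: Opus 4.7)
The plan is to reduce Theorem~\ref{cor5.10} to the already-established Proposition~\ref{th5.8} by means of the elementary spectral identity
\[
t^{2}Le^{-t\sqrt{L}}f(y) \;=\; t\cdot t\sqrt{L}\,e^{-t\sqrt{L}}(\sqrt{L}f)(y).
\]
By the definition of the tent $T(Q_\beta^k)$, the parameter $t$ in the integrand defining $\lambda_{Q_\beta^k}$ satisfies $t\simeq \delta^{k}=\ell(Q_\beta^k)$ uniformly (with constants depending only on $\rho$). Consequently
\[
\frac{|\lambda_{Q}|^{2}}{(\ell(Q))^{2}\,\mu(Q)} \;\simeq\; \frac{1}{\mu(Q)}\iint_{T(Q)} \bigl|t\sqrt{L}\,e^{-t\sqrt{L}}(\sqrt{L}f)(y)\bigr|^{2}\,\frac{d\mu(y)\,dt}{t}.
\]
Summing this over all $Q\in\mathscr{D}$ and repeating, verbatim, the geometric comparison between tents over dyadic cubes and truncated cones used in the proof of Lemma~\ref{le4.3}, one obtains that for an appropriate choice $\rho=\rho(\alpha,\delta)$ there holds the pointwise bound
\[
\widetilde{\mathscr{A}}_L(f)(x) \;\leq\; c\,\widetilde{S}_{L,\alpha}(\sqrt{L}f)(x), \qquad \text{a.e. } x\in X,
\]
with $c=c(n,\alpha)$.

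Once this is in hand, the estimate \eqref{e5.27} is immediate: for any $f\in L_0^{\infty}(X)$ Proposition~\ref{th5.8} gives
\[
\int_X |f(x)|^{p}\,V(x)\,d\mu(x) \;\leq\; C\int_X \bigl(\widetilde{\mathscr{A}}_L(f)(x)\bigr)^{p}\,W(x)\,d\mu(x) \;\leq\; C\int_X \bigl(\widetilde{S}_{L,\alpha}(\sqrt{L}f)(x)\bigr)^{p}\,W(x)\,d\mu(x).
\]

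For the second assertion \eqref{e5.28} under the assumption $W\in A_p$, $1<p\leq 2$, I would invoke the standard weighted $L^{p}$ boundedness of the conical square function $\widetilde{S}_{L,\alpha}$ associated to a nonnegative self-adjoint operator whose heat kernel satisfies Gaussian upper bounds. Under \textbf{(H1)}--\textbf{(H2)} (indeed \textbf{(H2)} alone suffices) one has, for every $A_{p}$ weight $W$ and every $g\in L^{p}(W)$,
\[
\int_X \bigl(\widetilde{S}_{L,\alpha}(g)(x)\bigr)^{p}\,W(x)\,d\mu(x) \;\leq\; C\int_X |g(x)|^{p}\,W(x)\,d\mu(x),
\]
with $C=C(p,n,\|W\|_{A_p})$; this follows from the now-standard machinery of Auscher--Martell-type extrapolation for operators with Gaussian bounds together with the unweighted $L^{p}$-bounds. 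Applying this estimate with $g=\sqrt{L}f$ and inserting it into \eqref{e5.27} immediately yields \eqref{e5.28}.

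The main obstacle is the first step: verifying the pointwise comparison $\widetilde{\mathscr{A}}_L(f)\leq c\,\widetilde{S}_{L,\alpha}(\sqrt{L}f)$ with the correct aperture and keeping bookkeeping straight. One must check that the extra factor $(\ell(P))^{-2}$ appearing in the definition of $\widetilde{\mathscr{A}}_L$ in \eqref{e5.18} is exactly absorbed by the factor $t^{2}$ that is stripped from $t^{2}L e^{-t\sqrt L}$ when we rewrite it as $t\cdot t\sqrt{L}e^{-t\sqrt{L}}\cdot \sqrt{L}$. A secondary technical point is ensuring that $\sqrt{L}f\in L^{2}(X)$, so that the Calderón reproducing formula underlying Proposition~\ref{th5.8} is available in the form required; for $f\in L_0^{\infty}(X)$ this follows from spectral theory combined with the Gaussian bound \textbf{(H2)} after a routine truncation/approximation in $L^{2}(X)$.
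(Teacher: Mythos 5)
Your argument is correct and is essentially the proof the paper gives: the pointwise bound $\widetilde{\mathscr{A}}_L(f)(x)\leq c\,\widetilde{S}_{L,\alpha}(\sqrt{L}f)(x)$ obtained from $t^2Le^{-t\sqrt{L}}f=t\cdot t\sqrt{L}e^{-t\sqrt{L}}(\sqrt{L}f)$ together with $t\simeq\ell(Q)$ on each tent and the geometry of Lemma~\ref{le4.3}, followed by Proposition~\ref{th5.8} for \eqref{e5.27}, and the weighted $A_p$ boundedness of the conical square function for \eqref{e5.28} (the paper cites \cite[Proposition~3.3]{GY} and \cite[Proposition~3.4]{GY2} rather than extrapolation, but the step is the same). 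Note only that no separate argument for $\sqrt{L}f\in L^2(X)$ is needed, since every expression involved ($\lambda_Q$, $\widetilde{\mathscr{A}}_L(f)$, and $t\sqrt{L}e^{-t\sqrt{L}}(\sqrt{L}f)=tLe^{-t\sqrt{L}}f$) is already well defined for $f\in L_0^\infty(X)\subset L^2(X)$.
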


\begin{proof}
{Similar to Lemma~\ref{le4.3}, one can verify that there exists a  constant $C=C(n,\alpha)>0$ such that}
\[\widetilde{\mathscr{A}}_L(f)(x)\leq C \widetilde{S}_{L,{\alpha}}(\sqrt{L}f)(x), \  \  a.e. \ x\in X.\]
The proof of \eqref{e5.27} is similar to that of Theorem~\ref{th5.2} and we omit it.
Furthermore,   one can apply   \cite[ Proposition~3.3]{GY} and  \cite[Proposition~3.4]{GY2}
to deduce that $\widetilde{S}_{L, 1}$ is bounded on $L_w^q(X)$ for $w\in A_q$, $1<q<\infty$.
Hence, when $1<p\leq 2$ and $W\in A_p$,
\[\int_X \left(\widetilde{S}_{L, 1}(\sqrt{L}f)(x)\right)^p W(x)d\mu(x)  \leq C \int_X
\big|\sqrt{L}f(x)\big|^p W(x)d\mu(x) \]
for some constant {$C=C(p, n,\|W\|_{A_p})$},  which deduces that \eqref{e5.28} holds. The proof of Theorem~\ref{cor5.10} is finished.
\end{proof}

\bigskip

\section{Eigenvalue estimates for Schr\"odinger operators}
\setcounter{equation}{0}

 Recall that $\Omega=\{(X, y)\in {\mathbb R}^{n-1}\times {\mathbb R}: y>\Phi(X)\} $ is
a  special Lipschitz domain of ${\mathbb R}^n$ where
  $\Phi: {\mathbb R}^{n-1}\mapsto {\mathbb R}$ is a Lipschitz function,
i.e., $||\nabla\Phi||_{\infty}\leq M<\infty$  for some constant $M$.
Let $A(x)$ be an $n\times n$
matrix function with real symmetric, bounded measurable  entries  on
${\mathbb R}^n$ satisfying the ellipticity condition
\begin{eqnarray}\label{e6.1}
\|A\|_{\infty}\leq \lambda^{-1}\ \ \ {\rm and}\ \ \
\  A(x)\xi\cdot\xi\geq \lambda |\xi|^2
\end{eqnarray}
for some constant $\lambda\in (0,1)$,  for all $\xi\in{\mathbb R}^n$
and for almost all $x\in {\mathbb R}^n$.
Let $V\in L^1_{\rm loc}(\Omega)$ be nonnegative. Let
$H$ be  a closed subspace of
\begin{eqnarray}\label{e6.2}
\big\{u\in W^{1,2}(\Omega): \int_{\Omega} V|u|^2dx <\infty \big\}
\end{eqnarray}
containing $W^{1,2}_0(\Omega),$
and
${\mathcal L}$ be a second order elliptic operator
in divergence form $ {\mathcal L}=-{\rm div} (A\nabla)-V $ on $L^2(\Omega)$
with largest domain ${\bf Dom}({\mathcal L})\subset H$ such that
\begin{eqnarray}\label{e6.3}
\langle  {\mathcal L}f, g\rangle=\int_{\Omega} A\nabla f\cdot\nabla g dx - \int_{\Omega} V f\cdot  g dx, \ \ \ \ \forall f\in
{\bf Dom}( {\mathcal L}),\ \ \forall g\in H.
\end{eqnarray}
 We use the notation $ {\mathcal L}=(A, \Omega, H)$ to denote any operator defined as above.
 We say that $ {\mathcal L}$ satisfies the Dirichlet boundary condition (DBC) when $H=W^{1,2}_0(\Omega)$,
 the Neumann  boundary condition (NBC) when $H=W^{1,2}(\Omega)$.

 In the sequel, we denote   ${\mathcal L}$ by $L$   when $V=0$.
 It is known that when $\Omega={\mathbb R}^n$ or $\Omega$ be a special Lipschitz domain of ${\mathbb R}^n$
 and $ L=-{\rm div} (A\nabla)$ is    real symmetric operators
(under DBC or NBC) as above,
 properties  \textbf{(H1)}, \textbf{(H2)} and \textbf{(H3)}
  for operator $L$  are always satisfied (see \cite{AR, Da}).
Furthermore, it follows by \eqref{e6.1} that there exists a constant $C>0$ such that for every $f\in L^2(\Omega)$
\begin{eqnarray}\label{e6.4}
\lambda \|\nabla f\|_2 \leq  \|L^{1/2} f\|_2 \leq \lambda^{-1} \|\nabla f\|_2.
\end{eqnarray}

 Now, let $\mathcal{L}=(A, \Omega, V)$ be as above. In studying the spectrum of $\mathcal{L}$,
 a central question
is to find conditions for $\mathcal{L}\geq 0$ (see \cite{CW, CWW, Fe, KS}).
 Observe that integration by parts,
together with \eqref{e6.2}, shows that $\mathcal{L}\geq 0$ follows from the inequality
\begin{eqnarray}\label{e6.5}
\int_{\Omega} |f(x)|^2 V(x)dx\leq  c\int_{\Omega}\left|\nabla  f(x)\right|^2  dx
\end{eqnarray}
for all $f\in {\bf Dom}( {\mathcal  L})$ and $c>0$ sufficiently small.

To show \eqref{e6.5},    we apply the previous result in the setting $X=\Omega$ and $L$
to  give a sufficient condition
    for $\mathcal{L}\geq 0$, by adapting the argument as in  Theorem~5.1 in \cite{Wi} to the present
	situation.

\begin{theorem}
\label{th6.1}
When $\Omega={\mathbb R}^n$ or $\Omega$ be a special Lipschitz domain of ${\mathbb R}^n$
 and $ \mathcal{L}=-{\rm div} (A\nabla)-V$ be    real symmetric operators
(under DBC or NBC) with nonnegative $V\in L^1_{\rm loc}(\Omega)$ as  above.
 Let $\sigma(\mathcal{L})$ be the lowest nonpositive eigenvalue of $\mathcal{L}$ on $\Omega$.
Fix a  $\tau>1$. For  every ball $B=B(x_B, r_B)$  in $\Omega$,
\begin{equation}
\label{e6.66}
\Lambda(B,V)=\int_B V(x)\left(\log \left(e+\f{V(x)}{V_B}\right)\right)^{\tau} d\mu(x).
\end{equation}
There exist two positive constants $c_1$, $c_2$ which depend on $n$, $\tau$ such that
\begin{equation}
\label{e6.77}
\sigma(\mathcal{L}) \geq -\sup_{B}
c_1\left[|B|^{-1}\Lambda(B,V)-c_2  r_B^{-2}|B|\right],
\end{equation}
where $|B|$ denotes the Lebesgue measure of $B$ in $\Omega$.

As a consequence, if
\[ r_B^2 \Lambda(B,V) \leq c_2 |B| \]
for all balls $B$, then $\mathcal{L}\geq 0$.
\end{theorem}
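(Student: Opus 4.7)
The plan is to combine the variational characterization of $\sigma(\mathcal{L})$ with the two-weight inequality \eqref{e5.28} of Theorem~\ref{cor5.10}, applied to $L=-\operatorname{div}(A\nabla)$. Since $\Omega$ is $\mathbb{R}^n$ or a special Lipschitz domain and $A$ is real symmetric elliptic, $L$ satisfies \textbf{(H1)}--\textbf{(H3)} by the examples at the end of Section~2. By the min--max principle applied to the quadratic form \eqref{e6.3}, the bound $\sigma(\mathcal{L})\geq -M$ is equivalent to
\[
\int_\Omega V|f|^2\,dx \;\leq\; \int_\Omega A\nabla f \cdot \nabla f\,dx \,+\, M\|f\|_2^2 \qquad \text{for all } f \in H,
\]
and since $\langle Lf,f\rangle = \int_\Omega A\nabla f\cdot \nabla f\,dx$ the right-hand side equals $\|\sqrt{L}f\|_2^2 + M\|f\|_2^2 = \|\sqrt{L+MI}\,f\|_2^2$ by the spectral theorem.

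I would first handle the nonnegativity consequence, which follows directly from Theorem~\ref{cor5.10} with $p=2$, $\tau>1$, and the constant weight $W\equiv c_2\in A_2$: hypothesis \eqref{e5.19} becomes $\ell(Q)^2\,\Lambda(Q,V)\leq c_2|Q|$ for every dyadic cube $Q$, which matches the stated hypothesis after the standard bounded-overlap passage from metric balls to dyadic cubes, and \eqref{e5.28} yields
\[
\int_\Omega V|f|^2\,dx \leq C c_2 \int_\Omega A\nabla f\cdot\nabla f\,dx;
\]
choosing $c_2$ small enough to absorb $C$ gives $\mathcal{L}\geq 0$.

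For the full eigenvalue bound I would apply the same machinery to the shifted operator $L+MI$, where $M=\sup_B c_1[|B|^{-1}\Lambda(B,V)-c_2 r_B^{-2}|B|]$. Its heat kernel $e^{-t(L+MI)}=e^{-Mt}e^{-tL}$ inherits \textbf{(H1)}--\textbf{(H3)} with the same constants (the factor $e^{-Mt}\leq 1$ only tightens the Gaussian bound), so Theorem~\ref{cor5.10} still applies, and the spectral identity $\|\sqrt{L+MI}\,f\|_2^2 = \|\sqrt{L}f\|_2^2 + M\|f\|_2^2$ produces exactly the desired $M\|f\|_2^2$ correction on the right. By the definition of $M$ one has $\ell(Q)^2\,\Lambda(Q,V)\leq c_1^{-1} M\,\ell(Q)^2|Q| + c_2|Q|^2$ for every cube $Q$, which is the two-term analogue of the hypothesis \eqref{e5.19}. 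The main obstacle will be turning the scale-dependent quantity $r_B^{-2}|B|$, which is \emph{not} a pointwise weight, into an admissible $W$ in Theorem~\ref{cor5.10}: I expect to resolve this by a dyadic stopping-time decomposition, selecting the maximal cubes on which the local excess $|Q|^{-1}\Lambda(Q,V) - c_2\ell(Q)^{-2}|Q|$ exceeds $M/c_1$, applying the two-weight inequality on the remaining "good" cubes with a constant weight, and absorbing the contribution from the stopped family into the $M\|f\|_2^2$ term using the volume estimates intrinsic to the stopping construction.
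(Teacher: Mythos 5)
Your treatment of the nonnegativity statement is fine: with $p=2$, a constant weight $W$, and the comparison $\Lambda(Q,V)\leq C\Lambda(B,V)$ between dyadic cubes and balls of comparable size, Theorem~\ref{cor5.10} does give $\int_\Omega V|f|^2\,dx\leq Cc_2\int_\Omega A\nabla f\cdot\nabla f\,dx$ and hence $\mathcal{L}\geq 0$. The gap is in the main inequality \eqref{e6.77}, which is the actual content of the theorem. The shifted operator $L+MI$ indeed satisfies \textbf{(H1)}--\textbf{(H3)} and $\|\sqrt{L+MI}\,f\|_2^2=\|\sqrt{L}f\|_2^2+M\|f\|_2^2$, but shifting the operator does not change the hypothesis of Theorem~\ref{cor5.10}: one still needs a single weight $W$ with $\ell(Q)^2\Lambda(Q,V)\leq\int_Q W$ for \emph{every} dyadic cube, and $W$ must be (essentially) bounded for the conclusion $\int|f|^2V\leq C\int|\sqrt{L+MI}f|^2W$ to reduce to the form bound you want. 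The only information available is $\ell(Q)^2\Lambda(Q,V)\leq c_1^{-1}M\,\ell(Q)^2|Q|+c_2|Q|^2$, and in the worst case this forces $\int_QW\gtrsim c_2|Q|^2$, i.e.\ the average of $W$ on $Q$ grows like $|Q|$, uniformly over all scales and positions; no admissible $W$ does this, and certainly no bounded one. Your stopping-time repair does not engage this difficulty: by the very definition of $M$ (after the ball-to-cube passage) the excess $|Q|^{-1}\Lambda(Q,V)-c_2\ell(Q)^{-2}|Q|$ never exceeds the threshold, so the stopped family is empty, while the scale-dependent term $c_2\ell(Q)^{-2}|Q|^2$ remains on every ``good'' cube. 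The obstacle is not exceptional cubes; it is that the majorant of $\Lambda(Q,V)$ has two terms of different homogeneity, and a black-box application of a theorem whose conclusion carries a single weight cannot produce the mixed right-hand side $c\|\nabla f\|_2^2+CM\|f\|_2^2$.

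What is needed --- and what the paper does --- is to open up the proof of Proposition~\ref{th5.8} at $p=2$ rather than quote its corollary. Starting from the dyadic estimate $\int|F_{b,m}|^2V\,d\mu\lesssim\sum_k2^k\sum_{Q\in\mathcal{G}_{b,m}^k}\mathcal{E}_{Q}(G_{b,m}^k)\,V(Q)$ and inserting $2^kV(Q)\leq\Lambda(Q,V)\leq c_3\,\ell(Q)^{-2}|Q|^2+C_{11}|Q|$ (assumption \eqref{e6.8}, with $c_3$ at one's disposal and $C_{11}$ the supremum of the excess), each summand splits into two pieces: the piece carrying $\ell(Q)^{-2}|Q|$ converts $\mathcal{E}_Q$ into the scale-weighted quantity $\widetilde{\mathcal{E}}_Q$, and its sum is $c_3\|\widetilde{\mathscr{A}}_L(F_{b,m})\|_2^2\lesssim c_3\|L^{1/2}f\|_2^2\lesssim c_3\lambda^{-1}\|\nabla f\|_2^2$; the piece carrying $|Q|$ sums to $C_{11}\|\mathscr{A}_L(F_{b,m})\|_2^2\lesssim C_{11}\|f\|_2^2$ by the $L^2$-boundedness of $S_L$. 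Choosing $c_3\sim\lambda^2$ absorbs the gradient term into $\int_\Omega A\nabla f\cdot\nabla f\,dx$, giving $\langle\mathcal{L}f,f\rangle\geq-CC_{11}\|f\|_2^2$, and Lemma~\ref{le6.1} (which itself requires an argument, since the normalizations $V_Q$ and $V_B$ differ) transfers the cube condition to balls, yielding \eqref{e6.77}. The missing idea in your plan is precisely this simultaneous use of the two discrete square functions $\mathscr{A}_L$ and $\widetilde{\mathscr{A}}_L$ inside the dyadic argument, which is what turns the non-weight term $c_2r_B^{-2}|B|$ into the quadratic form and the excess into the $M\|f\|_2^2$ correction.
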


To show Theorem~\ref{th6.1},
for every $Q\in {\mathcal D}=\bigcup_{b=1}^{K} {\mathcal D}^b$ we define
\begin{equation}
\label{e6.6}
\Lambda(Q,V)=\int_Q V(x)\left(\log \left(e+\f{V(x)}{V_Q}\right)\right)^{\tau} d\mu(x),\ \ \  \tau>1.
\end{equation}
Let $\ell(Q)$ denote the side length of the cube $Q.$
 Then the following result holds.

\begin{lemma}\label{le6.1}
Fix  $\tau>1$.
For every $Q\in {\mathcal D}$,  there exists ball $B$ such that
$Q\subset B$ and $|B|\leq C|Q|$ and $r_B\sim \ell(Q)$ such that
\begin{eqnarray}\label{ef1}
\Lambda(Q,V)
\leq C\Lambda(B,V).
\end{eqnarray}
\end{lemma}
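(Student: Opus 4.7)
The plan is simply to take $B$ to be the circumscribed Euclidean ball of $Q$, i.e.\ $B=B(x_Q,\sqrt{n}\,\ell(Q)/2)$ where $x_Q$ is the center of $Q$. This choice automatically gives $Q\subset B$, $r_B\sim \ell(Q)$, and $|B|\leq C_n|Q|$, so the geometric requirements of the lemma are met. Because $B\supset Q$ and $|B|\lesssim |Q|$, a one-line averaging argument yields $V_B\geq (|Q|/|B|)V_Q\gtrsim V_Q$. If $V_Q=0$, the bound is trivial, so I may assume $V_Q,V_B>0$.

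The analytical content lies in comparing the two logarithmic factors $\log(e+V(x)/V_Q)$ and $\log(e+V(x)/V_B)$. I would split into two cases. If $V_Q\geq V_B$, then pointwise $\log(e+V/V_Q)\leq \log(e+V/V_B)$, and since $Q\subset B$ one gets $\Lambda(Q,V)\leq \Lambda(B,V)$ immediately. The interesting case is $V_Q<V_B$: setting $\beta=V_B/V_Q>1$, the identity $V/V_Q=\beta V/V_B$ together with $\beta\geq 1$ gives
\[
\log(e+V(x)/V_Q)=\log(e+\beta V(x)/V_B)\leq \log\beta+\log(e+V(x)/V_B),
\]
and raising to the $\tau$-th power (using $(a+b)^\tau\leq 2^{\tau-1}(a^\tau+b^\tau)$ for $\tau\geq 1$) and integrating over $Q$ produces
\[
\Lambda(Q,V)\leq C_\tau(\log\beta)^\tau\,|Q|V_Q+C_\tau\,\Lambda(B,V).
\]

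To absorb the first term, I would observe that $|Q|V_Q=|Q|V_B/\beta\leq |B|V_B/\beta=\beta^{-1}\int_B V$, and that $\int_B V\leq \Lambda(B,V)$ since $\log(e+s)\geq 1$ and $\tau>1$. Hence
\[
(\log\beta)^\tau\,|Q|V_Q\leq \frac{(\log\beta)^\tau}{\beta}\,\Lambda(B,V)\leq M_\tau\,\Lambda(B,V),
\]
where $M_\tau:=\sup_{\beta\geq 1}(\log\beta)^\tau/\beta=(\tau/e)^\tau<\infty$ (the sup is attained at $\beta=e^\tau$). Combining both cases gives $\Lambda(Q,V)\leq C(n,\tau)\Lambda(B,V)$, which is \eqref{ef1}.

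The main obstacle is genuinely the case $V_B\gg V_Q$, which can occur when $V$ is concentrated much more on $B\setminus Q$ than on $Q$, and in that case $\log(e+V/V_Q)$ can substantially exceed $\log(e+V/V_B)$ pointwise on $Q$. The saving comes from the fact that $(\log\beta)^\tau/\beta$ is uniformly bounded in $\beta\geq 1$: however much larger $V_B$ is than $V_Q$, the extra mass of $V$ on $B\setminus Q$ responsible for producing this large $\beta$ also enlarges $\Lambda(B,V)$ by a corresponding factor of $\beta$, which dominates the inflated logarithm on $Q$.
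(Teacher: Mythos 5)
Your proof is correct, and it takes a genuinely different route from the paper. The paper proves \eqref{ef1} via the duality (variational) characterization of the functional $\Lambda$ from Wilson's book: $\Lambda(Q,V)$ is comparable to $\sup\int_Q V\phi$ over $\phi$ in an exponential class ${\rm Exp}(Q,1/\tau)$, and the inequality follows by extending each such $\phi$ by zero from $Q$ to $B$ and checking it lands in ${\rm Exp}(B,1/\tau)$ with a slightly worse constant (this only uses $|Q|\le|B|$); the same duality is the tool already used in Corollary~\ref{cor4.6}, so no case analysis or explicit log manipulation is needed. You instead argue directly and elementarily: the only issue is the mismatch of normalizations $V_Q$ versus $V_B$, and when $V_B=\beta V_Q$ with $\beta>1$ you split $\log(e+V/V_Q)\le\log\beta+\log(e+V/V_B)$, integrate, and absorb the extra term $(\log\beta)^\tau|Q|V_Q\le\beta^{-1}(\log\beta)^\tau\Lambda(B,V)$ using $\sup_{\beta\ge1}(\log\beta)^\tau/\beta=(\tau/e)^\tau$ together with $\int_B V\le\Lambda(B,V)$; all steps (the inequality $e+\beta s\le\beta(e+s)$ for $\beta\ge1$, convexity for $(a+b)^\tau$, $Q\subset B$) are correct, and in fact, like the paper's argument, you never really need $|B|\lesssim|Q|$ for \eqref{ef1} itself — that condition, and $r_B\sim\ell(Q)$, are only needed later in Theorem~\ref{th6.1}. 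What each approach buys: yours is self-contained, avoids citing \cite[Theorem~11.2]{Wi08}, and yields an explicit constant $2^{\tau-1}\bigl(1+(\tau/e)^\tau\bigr)$ depending only on $\tau$; the paper's duality argument avoids case distinctions and transfers verbatim to other Orlicz-type functionals admitting such a sup representation.

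One minor point of bookkeeping: in Section 6 the cubes $Q\in\mathcal{D}$ are the Hyt\"onen--Kairema sets built on $X=\Omega$ (a Lipschitz domain in general), so your ``circumscribed Euclidean ball'' should be replaced by the metric ball $B=B(x_Q,C\delta^{k_Q})$ in $\Omega$ furnished by \eqref{eqn:dycube2}; then $Q\subset B$, $r_B\sim\ell(Q)$, and $|B|\le C|Q|$ follow from the construction and doubling, and your argument, which only uses $Q\subset B$, $|Q|\le|B|$ and $V\ge0$, goes through unchanged.
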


\begin{proof}
Recall that from \cite[Theorem~11.2]{Wi08}, if $V_Q\neq 0$,
for all $\gamma>0$,
\begin{equation}
\label{e4.300}
\int_Q   V(x)\left(\log\left(C_6 +\f{V(x)}{V_Q}\right)\right)^{\gamma}d\mu(x)
 \sim \sup\left\{\int_Q V(x)\phi(x)d\mu(x):\  \  \phi\in
 {\rm{Exp}}_{e}(Q,1/\gamma)
  \right\},
\end{equation}
where $C_6$ is the constant in \eqref{e4.19}, and
\[{\rm{Exp}}_{e}(Q,1/\gamma)=\left\{\psi:\    \f{1}{|Q|} \int_Q
\exp\left(\psi^{1/\gamma}(x)\right)d\mu(x) \leq e+1 \right\}.\]
Note that \eqref{e4.300} still holds if we replace dyadic cube $Q$ by
ball $B$.

For $\phi\in  {\rm{Exp}}_{e}(Q,1/\tau)$, define ${\tilde \phi}(x)=\phi(x)$ for $x\in Q$
and ${\tilde \phi}(x)=0$ for $x\notin Q$.
Then
\begin{eqnarray*}
\f{1}{|B|} \int_B
\exp\left({\tilde \phi}^{1/\tau}(x)\right)d\mu(x) &=&
\f{1}{|B|} \int_Q
\exp\left({\tilde \phi}^{1/\tau}(x)\right)d\mu(x)+\f{1}{|B|} \int_{B\backslash Q}
\exp\left({\tilde \phi}^{1/\tau}(x)\right)d\mu(x)\\
&\leq& \f{1}{|Q|} \int_Q
\exp\left(\phi^{1/\tau}(x)\right)d\mu(x)+1\\
&\leq& e+1+1,
\end{eqnarray*}
which implies that ${\tilde \phi}\in {\rm{Exp}}_{e+1}(B,1/\tau)$.
Then for all $\phi\in  {\rm{Exp}}_{e}(Q,1/\tau)$
\begin{eqnarray*}
\int_Q V(x)\phi(x)d\mu(x)&=& \int_B V(x){\tilde \phi}(x)d\mu(x)\\
&\leq & \sup\left\{\int_B V(x)\psi(x)d\mu(x):\  \  \psi\in
 {\rm{Exp}}_{e+1}(B,1/\tau)\right\}.
\end{eqnarray*}
Therefore,
\begin{eqnarray*}
\Lambda(Q,V)&=&\int_Q   V(x)\left(\log\left(e +\f{V(x)}{V_Q}\right)\right)^{\tau}d\mu(x)\\
&\leq& C\sup\left\{\int_Q V(x)\psi(x)d\mu(x):\  \  \psi\in
 {\rm{Exp}}_{e}(Q,1/\tau)
  \right\}\\
&\leq&C \sup\left\{\int_B V(x)\psi(x)d\mu(x):\  \  \psi\in
 {\rm{Exp}}_{e+1}(B,1/\tau)\right\}\\
&\leq&C \int_B   V(x)\left(\log\left(e+1 +\f{V(x)}{V_B}\right)\right)^{\tau}d\mu(x)\\
&\leq&C \int_B   V(x)\left(\log\left(e +\f{V(x)}{V_B}\right)\right)^{\tau}d\mu(x)\\
&\leq&C \Lambda(B,V).
\end{eqnarray*}
This proves Lemma~\ref{le6.1}.
\end{proof}

\noindent
{\it Proof of Theorem~\ref{th6.1}}.\
First, we assume that for all $Q\in \mathcal{D}$,
\begin{equation}
\label{e6.8}
|Q|^{-1}\Lambda(Q,V) -c_3 \big(\ell(Q)\big)^{-2}|Q|\leq {   C}_{11},
\end{equation}
where $c_3$ will be chosen later. We will show that $\mathcal{L}\geq -cC_{11}$ for some positive constant $c$.

For arbitrary $f\in C_0^\infty (\Omega)$, let $F_{b,m}$, $\mathcal{G}_{b,m}^k$, $G_{b,m}^k$,
$\mathcal{E}_{E_2(P)}(G_{b,m}^k)$, $\widetilde{\mathcal{E}}_{E_2(P)}(G_{b,m}^k)$
be as in  the proof of Proposition~\ref{th5.8} for $1<p\leq 2$, with $X=\Omega$ and the
corresponding self-adjoint operator is $ {L}=-{\rm div} (A\nabla)$.
It
follows from the proof of Theorem~\ref{th4.7} that there exists $\{m_i\}$ such that
\begin{eqnarray}
\label{e8.16}
	\int_X |f(x)|^p V(x)d\mu(x)
	&\leq&  \underset{i\to \infty}{\underline{\lim}} \int_X
	\Big|\sum\nolimits_{b=1}^K  F_{b,m_i}(x)\Big|^p V(x)d\mu(x)  \notag\\
	&\leq & C(K,p) \sum\nolimits_{b=1}^K  \underset{i\to \infty}{\underline{\lim}}
	\int_X |F_{b,m_i}(x)|^p V(x)d\mu(x).
\end{eqnarray}
An argument as in \eqref{e5.26} shows that
\begin{eqnarray*}
\int_{X} \big|F_{b,m}(x)\big|^p V(x)d\mu(x) &\leq & C \sum_{k\in \mathbb{N}}    2^{k}
 \sum_{Q\in \mathcal{G}_{b,m}^{k}} \mathcal{E}_{Q}(G_{b,m}^k)\, V(Q)  \\
&\leq &  C  \sum_{k\in \mathbb{N}} \sum_{Q\in \mathcal{G}_{b,m}^{k}}
  \mathcal{E}_{Q}(G_{b,m}^k)\,  \Lambda(Q,V) \\
&\leq & C  \sum_{k\in \mathbb{N}} \sum_{Q\in \mathcal{G}_{b,m}^{k}}
 \Bigg[   c_3\, \frac{\mathcal{E}_{Q}(G_{b,m}^k)}{\big(\ell(Q)\big)^2}  |Q|
   +C_{11}\, \mathcal{E}_{Q}(G_{b,m}^k)\,|Q| \Bigg]\\
&\leq & C \sum_{k\in \mathbb{N}} \sum_{Q\in \mathcal{G}_{b,m}^{k}}
\left[  c_3 \, \widetilde{\mathcal{E}}_Q(G_{b,m}^k)\, |Q|  +C_{11}\, \mathcal{E}_{Q}(G_{b,m}^k)\,|Q|   \right]\\
&=&  C \sum_{k\in \mathbb{N}} \left\{ c_3
\int_{\Omega}\left(\widetilde{\mathscr{A}_{ {L}}}(G_{b,m}^k)\right)^2 (x) dx
+ C_{11} \int_{\Omega} \left(\mathscr{A}_{ {L}}(G_{b,m}^k)\right)^2(x)dx \right\}\\
&=& C \left\{ c_3 \int_{\Omega}\left(\widetilde{\mathscr{A}_{ {L}}}(F_{b,m})\right)^2 (x)  dx
 + C_{11} \int_{\Omega} \left(\mathscr{A}_{ {L}}(F_{b,m})\right)^2(x)dx\right\}.
\end{eqnarray*}
There exists a constant $C>0$   such that
$$
\int_{\Omega}\left(\widetilde{\mathscr{A}_{ {L}}}(F_{b,m})\right)^2 (x)
dx \leq \int_{\Omega}\left(\widetilde{\mathscr{A}_{ {L}}}(f)\right)^2 (x)
dx \leq  C  \int_{\Omega} \left| {L}^{1/2}(f)(x)\right|^2  dx.
$$
Notice that $S_{L,\alpha}$ is bounded on $L^p$ for $1<p<\infty$ and $\alpha>0$ (see \cite[Proposition 3.3]{GY}), we have
$$
\int_{\Omega} \left(\mathscr{A}_{ {L}}(F_{b,m})\right)^2(x)dx
\leq \int_{\Omega} \left(\mathscr{A}_{ {L}}(f)\right)^2(x)dx \leq
 \int_{\Omega} \left(S_{ {L,5\delta^{-2}}}(f)\right)^2(x)dx   \leq C \int_{\Omega} |f(x)|^2 dx.
$$

These, combined with \eqref{e8.16} and \eqref{e6.4}, deduce that
\begin{eqnarray}\label{kkk}
\int_\Omega  |f(x)|^2 V(x)dx&\leq& C_{12} c_3  \int_\Omega \left| {L}^{1/2}f(x)\right|^2  dx
+CC_{11}\int_\Omega |f(x)|^2 dx\nonumber\\
&\leq&   C_{12}c_3 \lambda^{-1}\int_\Omega \left| \nabla f(x)\right|^2  dx+C  C_{11}\int_\Omega |f(x)|^2 dx
\end{eqnarray}
for some constant $C_{12}$ and  $\lambda$ is the constant in \eqref{e6.1}.
We take $c_3=\lambda^2/(2C_{12})$ in \eqref{kkk} and use  \eqref{e6.1} and \eqref{e6.3} to obtain
\[\langle \mathcal{L} f,f\rangle \geq    C_{12}c_3 \lambda^{-1}\int_\Omega \left| \nabla f(x)\right|^2  dx
-\int_\Omega |f(x)|^2 V(x)dx \geq  -C  C_{11} \langle f,f \rangle, \  \  \   f\in {\bf Dom}( {\mathcal L}).   \]

Finally,  it follows by Lemma~\ref{le6.1} that
\begin{eqnarray*}
\sigma(\mathcal{L}) &\geq& -\sup_{Q}
c\left[|Q|^{-1}\Lambda(Q,V)-c_3  \ell(Q)^{-2}|Q|\right]\\
&\geq& -\sup_{B}
c_1\left[|B|^{-1}\Lambda(B,V)-c_2   r_B^{-2}|B|\right]
\end{eqnarray*}
for some $c_1, c_2>0$. The proof of Theorem~\ref{th6.1} is complete.
   \hfill{}$\Box$

\bigskip

\bigskip

 \noindent
{\bf Acknowledgments.}
P. Chen was partially  supported by NNSF of China 11501583, Guangdong Natural
Science Foundation 2016A030313351 and the Fundamental Research Funds for the Central
Universities 161gpy45.  X. T. Duong  is supported by
Australian Research Council  Discovery Grant DP 140100649.
 L.C. Wu and L.X.~Yan are supported by the NNSF
of China, Grant Nos.~11371378 and  ~11521101).

\medskip


\begin{thebibliography}{99}


\bibitem{Au07} P. Auscher,   On necessary and sufficient conditions for
$L^p$-estimates of Riesz transforms associated to elliptic operators on
$\mathbb{R}^n$  and related estimates. {\it Mem. Amer. Math. Soc.} \textbf{186}
(2007), no. 871, xviii+75 pp.

\bibitem {ACDH} P. Auscher, T. Coulhon, X.T. Duong and S. Hofmann,
Riesz transform on manifolds and heat kernel regularity.
{\it Ann. Sci. \'Ecole Norm. Sup.} {\bf 37}, (2004), 911--957.


\bibitem{AM} P. Auscher, J.M. Martell, Weighted norm inequalities, off-diagonal estimates and elliptic
operators. Part I: General operator theory and weights.
  {\it Adv.   Math.}, {\bf 212}(2007), 225-276.

\bibitem {AR} P. Auscher and E. Russ,    Hardy spaces and divergence
operators on strongly
Lipschitz domain of ${\mathbb
R}^n$. {\it  J. Funct. Anal.} {\bf 201} (2003), 148-184.

\bibitem{BFP} F. Bernicot, D. Frey, S. Petermichl,   Sharp weighted
norm estimates beyond Calder\'on-Zygmund theory. {\it Anal. PDE.}  \textbf{9} (2016), 1079-1113.

\bibitem {C} A. Calder\'on,
An atomic decomposition of distributions in parabolic $H^p$ spaces. {\it Adv.  Math.},  {\bf 25} (1977),
 216--225.

 \bibitem{CCPMLMS} L. Carleson,  S.-Y.A. Chang, P. Jones, M. Keel, P. Lax, N. Makarov,
 D. Sarason, W. Schlag and B. Simon, Thomas H. Wolff (1954-2000). Notices Amer. Math. Soc. 48 (2001),  482-490.

\bibitem{CWW} S.-Y.A. Chang, J.M. Wilson, T.H. Wolff,   Some weighted
norm inequalities concerning the Schr\"odinger operators. {\it Comment. Math. Helv}.
\textbf{60} (1985), 217-246.

\bibitem{CW} S. Chanillo, R.L. Wheeden,   $L^p$ estimates for fractional
integrals and Sobolev inequalities with applications to Schr\"odinger operators.
{\it Comm. Partial Differential Equations} \textbf{10} (1985),  1077-1116.
	
 \bibitem {CGT} J. Cheeger, M. Gromov and M. Taylor, Finite propagation
speed, kernel estimates for functions of the Laplace operator, and the geometry of
complete Riemannian manifolds. {\it J. Differential Geom.}  {\bf 17} (1982), no. 1,
15--53.

\bibitem{COSY}  P. Chen, E.M. Ouhabaz, A. Sikora and L.X. Yan,
 Restriction estimates, sharp spectral multipliers and endpoint estimates
 for Bochner-Riesz means.
 {\it J. Anal. Math.}  \textbf{129} (2016), 219--283.

\bibitem{Ch} M. Christ,   A $T(b)$ theorem with remarks on analytic
capacity and the Cauchy integral.  {\it Colloq. Math.} \textbf{60/61} (1990),
no. 2, 601-628.

\bibitem {CW71} R.R. Coifman, G. Weiss,    Analyse harmonique
hon-commutative sur certains espaces homog$\grave{e}$nes. {\it Lecture Notes in
Math.}  Vol. {\bf 242}, Springer, Berlin (1971)

\bibitem{CS} T. Coulhon, A. Sikora,    Gaussian heat kernel upper
bounds via Phragm\'en-Lindel\"of theorem. {\it Proc. Lond. Math. Soc.}
\textbf{96}, (2008), 507-544.
	
\bibitem {Da} E.B. Davies,  {\it Heat kernels and spectral theory},
Cambridge Tracts in Mathematics, 92. Cambridge University Press, Cambridge,
Cambridge, 1990.



\bibitem {DM} X.T. Duong and A. McIntosh,  {\it Singular integral operators with
non-smooth kernels on irregular domains},   Rev. Mat. Iberoamericana, {\bf 15}
(1999), 233-265.


\bibitem {DOS}  X.T. Duong, E.M. Ouhabaz  and A. Sikora,
 Plancherel-type estimates and sharp spectral multipliers.
 {\it J. Funct. Anal.}, {\bf 196} (2002),  443--485.



\bibitem {DY1} X.T. Duong and L.X. Yan, New function spaces of BMO type,
John-Nirenberg inequality, interpolation and applications. {\it Comm.
Pure Appl. Math.}, {\bf 58} (2005), 1375--1420.

 \bibitem {DY} X.T. Duong and L.X. Yan, Duality of Hardy and BMO spaces
associated with operators with heat kernel bounds. {\it J. Amer.
Math. Soc.}, {\bf 18} (2005), 943--973.



\bibitem{DZ03} J. Dziuba\'nski, J. Zienkiewicz,
  Hardy space $H^1$ associated to Schr\"odinger operator with potential satisfying reverse H\"older inequality.
  {\it  Rev. Mat. Iberoamericana} {\bf  15} (1999),   279-296.

\bibitem{Fe} C.  Fefferman,   The uncertainty principle. {\it Bull. Amer.
Math. Soc. (N.S.)}  \textbf{9} (1983),  129-206.
	
\bibitem{Fe87} R. Fefferman,  Harmonic analysis on product spaces.
	{\it Ann. of Math.} (2) \textbf{126} \textbf (1987),    109-130.	

\bibitem{FP97} R. Fefferman, J. Pipher,   Multiparameter operators and
sharp weighted inequalities. {\it Amer. J. Math.}  \textbf{119} (1997),   337-369.
	

\bibitem{GX} R.M. Gong, P.Z. Xie,   Weighted $L^p$ estimates for the
	area integral associated with self-adjoint operators on homogeneous space.
	 {\it J. Math. Anal. Appl.} {\bf 393} (2012),   590--604.

\bibitem{GY} R.M. Gong, L.X. Yan,  Weighted $L^p$ estimates for the
area integral associated to self-adjoint operators. {\it Manuscripta Math.}
\textbf{144} (2014),   25-49.
	
\bibitem{GY2} R.M. Gong, L.X. Yan,  Littlewood-Paley and spectral
multipliers on weighted $L^p$ spaces. {\it J. Geom. Anal.} \textbf{24} (2014),
 873-900.
	
\bibitem {HLMMY} S. Hofmann,  G.Z. Lu,  D. Mitrea,  M. Mitrea and L.X. Yan,  Hardy
spaces associated to nonnegative self-adjoint  operators satisfying
Davies--Gaffeney estimates.    {\it Mem. Amer. Math.
Soc.,} {\bf 214} (2011), no. 1007.

\bibitem{Hy2012a}  T. Hyt\"onen,   The sharp weighted bound for general
 Calder\'on-Zygmund operators. {\it Ann. of Math.}  {\bf 175} (2012),  1473-1506.

\bibitem{Hy2012}T. Hyt\"onen, C. P\'erez and E. Rela,  Sharp reverse H\"older property for
$A_\infty$ weights on spaces of homogeneous type.  {\it J. Funct. Anal.}
\textbf{263} (2012),   3883-3899.

\bibitem {HK} T. Hyt\"onen, A. Kairema,  Systems of dyadic cubes in a
doubling metric space. {\it Colloq. Math.} \textbf{126} (2012),   1-33.

\bibitem{KS} R. Kerman, E.T. Sawyer,   Weighted norm inequalities for
 potentials with applications to Schr\"odinger operators, Fourier transforms,
 and Carleson measures.  {\it Bull. Amer. Math. Soc. (N.S.)} \textbf{12} (1985),
  112-116.
	


 \bibitem {Ler} A.K. Lerner,
 Sharp weighted norm inequalities for Littlewood-Paley operators and
 singular integrals. {\it Adv. Math.} {\bf 226}  (2011),  3912-3926.

\bibitem {LN15} A.K. Lerner, F. Nazarov,   Intuitive dyadic calculus:
the basics. Aavailable at http://arXiv: 1508.05639.
	
\bibitem{LY} P. Li, S.T. Yau,   On the parabolic kernel of the
 Schr\"odinger operator.  {\it Acta Math.}  \textbf{156} (1986),   153-201.
	
\bibitem{Ou} E.M. Ouhabaz, {\it Analysis of heat equations on domains.}
London Math. Soc. Monographs, Vol. 31, Princeton Univ. Press 2005.


\bibitem{Pe} C. P\'erez,  On a theorem of Muckenhoupt and Wheeden and a weighted inequality related to
  Schr\"odinger operators. {\it Trans. Amer. Math. Soc.} {\bf 340}  (1993),   549-562.

\bibitem {Sh95} Z. Shen, $L^p$ estimates for Schr\"odinger operators with certain
 potentials. {\it Ann. Inst. Fourier (Grenoble)} \textbf{45} (1995),  513-546.

\bibitem {SW} E. Sawyer, R.L. Wheeden,   Weighted inequalities for
fractional integrals on Euclidean and homogeneous spaces. {\it Amer. J. Math.}
\textbf{114} (1992),   813-874.
	
\bibitem{Si04} A. Sikora,   Riesz transform, Gaussian bounds and the
method of wave equation.  {\it Math. Z.} \textbf{247}, (2004), 643-662.
	
\bibitem{SY16} L. Song, L.X. Yan,  A maximal function characterization
for Hardy spaces associated to nonnegative self-adjoint operators satisfying
Gaussian estimates. {\it Adv. Math.} \textbf{287} (2016), 463-484.
	
\bibitem{St70} E.M. Stein, {\it Singular integrals and differentiability
properties of functions}. Princeton Univ. Press, Princeton N.J., 1970.

\bibitem{St} E.M. Stein, {\it Harmonic analysis: Real variable
methods, orthogonality and oscillatory integrals,}
Princeton Univ. Press, Princeton, NJ,
1993.

\bibitem{Swe91} C. Sweezy, $L$-harmonic functions and the exponential square class.
{\it Pacific J. Math.} {\bf 147} (1991), 187-200.
	

\bibitem{T}  X. Tolsa,  BMO, H1, and Calder\'on-Zygmund operators for non doubling measures.
{\it Math. Ann.} {\bf 319} (2001),  89--149.


\bibitem{V} A. Volberg, Calder\'on-Zygmund capacities and operators on homogeneous spaces.
{\it CBMS Regional Confernces Series in Mathematics}, {\bf 100} (2003).



\bibitem{Wi87} J.M. Wilson,  A sharp inequality for the square function. {\it Duke Math. J. }
	\textbf{55}, (1987), 879-887.
	
\bibitem{Wi} J.M. Wilson,   Weighted norm inequalities for the continuous
square function. {\it  Trans. Amer. Math. Soc.}  \textbf{314} (1989),   661-692.
	
\bibitem{Wi08} J. M. Wilson,   Weighted Littlewood-Paley theory and
exponential-square integrability. {\it  Lecture Notes in Math.} \textbf{1924},
Springer, Berlin, 2008.
	
\bibitem{Wu} L.C. Wu, The exponential square class on space of homogeneous type. Preprint (2016).
	
\bibitem{Yo} K. Yosida, {\it Functional Analysis} (Fifth edition).
Spring-Verlag, Berlin, 1978.


\end{thebibliography}
 \end{document}